\numberwithin{equation}{section}
\newtheorem{definition}{Definition}[section]
\newtheorem{theorem}{Theorem}[section]
\newtheorem{corollary}{Corollary}[section]
\newtheorem{lemma}{Lemma}[section]
\newtheorem{proposition}{Proposition}[section]
\newtheorem{remark}{Remark}[section]
\begin{document}

\begin{frontmatter}

\title{Asymptotic behavior of a nonautonomous evolution equation governed by a quasi-nonexpansive operator\tnoteref{mytitlenote}}
\tnotetext[mytitlenote]{This work was supported by  the National Natural Science  Foundation of China
(11471230) and the Scientific Research Foundation  of the Education Department of Sichuan Province (16ZA0213).}

\author[mymainaddress]{Ming Zhu}
\ead{mingzhu129@foxmail.com}

\author[mysecondaryaddress]{Rong Hu}
\ead{ronghumath@aliyun.com}

\author[mymainaddress]{Ya-Ping Fang\corref{mycorrespondingauthor}}
\cortext[mycorrespondingauthor]{Corresponding author}
\ead{ypfang@scu.edu.cn}

\address[mymainaddress]{Department of Mathematics, Sichuan University, Chengdu, Sichuan, P.R. China}
\address[mysecondaryaddress]{Department of Applied Mathematics, Chengdu University of Information Technology, Chengdu, Sichuan, P.R. China}

\begin{abstract}
We study the asymptotic behavior of the trajectory of a nonautonomous evolution equation governed by a quasi-nonexpansive operator in Hilbert spaces. We prove the weak convergence of the trajectory to a fixed  point of the operator by relying on Lyapunov analysis. Under a metric subregularity condition, we further derive a flexible global exponential-type rate for the distance of the trajectory to the set of fixed points. The results obtained are applied to analyze the  asymptotic behavior of the trajectory of an adaptive Douglas-Rachford dynamical system, which is applied for finding a zero of the sum of two operators, one of which is strongly monotone while the other one is weakly monotone.
\end{abstract}

\begin{keyword}Nonautonomous evolution equation \sep  Adaptive Douglas-Rachford dynamical system \sep  Quasi-nonexpansive operator \sep  Asymptotic behavior \sep Demiclosedness principle \sep  Metric subregularity
\MSC[2010] 34G25 \sep 37N40 \sep 46N10 \sep 47H05 \sep 47H09 \sep 47H10 \sep 90C25
\end{keyword}

\end{frontmatter}

\section{Introduction}
\label{intro}
Throughout this paper, $\mathbb{H}$ is a real Hilbert space endowed with  scalar $\langle\cdot,\cdot\rangle$ and
norm $\|\cdot\|$, $\mathbb{R}$ is the set of real numbers, $\mathbb{R}_{+}:=\{x\in\mathbb{R}\mid x\geq0\}$, and
$\mathbb{R}_{++}:=\{x\in\mathbb{R}\mid x>0\}$.
We utilize the notation $A:\mathbb{H}\rightrightarrows\mathbb{H}$ to indicate that $A$ is a set-valued operator on $\mathbb{H}$ and the notation $A:\mathbb{H}\rightarrow\mathbb{H}$ to indicate that $A$ is a single-valued on $\mathbb{H}$.
 Given two set-valued  operators $A$ and $B$: $\mathbb{H} \rightrightarrows \mathbb{H}$, the prototypical  inclusion problem
\begin{equation}
   \label{SMP}
   \text{find}\quad u\in \mathbb{H}\quad\text{such that}\quad  0\in A(u)+B(u),
\end{equation}
models a variety of tasks in diverse applied fields
such as signal processing, machine learning and statistics
\cite{application2Bruckstein}.
A popular and powerful method for solving the problem (\ref{SMP})
is the Douglas-Rachford (DR) algorithm
\cite{b9Douglas,b11Lions}.
In its formulation the operator is decomposed into simpler individuals which
are then processed separately in the subproblems, hence, DR algorithm is often referred  as a splitting algorithm.
In 1956, this splitting algorithm was introduced originally by Douglas and Rachford \cite{b9Douglas}
to solve heat conduction
flow problems in a finite dimensional space. The original splitting scheme is
$$\left\{
\begin{array}{l}
{\frac{1}{\lambda}\left(u_{k+\frac{1}{2}}-u_{k}\right)+A\left(u_{k+\frac{1}{2}}\right)+B\left(u_{k}\right)=0} \\ {\frac{1}{\lambda}\left(u_{k+1}-u_{k+\frac{1}{2}}\right)+B\left(u_{k+1}\right)-B\left(u_{k}\right)=0,}
\end{array}
\right.
$$
where $A$ and $B$
are single-valued linear monotone operators.
Eliminating $u_{k+\frac{1}{2}}$, and defining $z_{k}=\left(J_{\lambda B}\right)^{-1} u_{k}$,
one can rewrite the above scheme as
\begin{equation}\label{original-DPA}
z_{k+1}=J_{\lambda A}\left(2 J_{\lambda B}-I\right) z_{k}+\left(I-J_{\lambda B}\right) z_{k},
\end{equation}
where
$J_{\lambda A}$ and $J_{\lambda B}$  are the resolvent operators of $A$ and $B$ respectively, and $I$ is the identity operator in $\mathbb{H}$.
The algorithm (\ref{original-DPA})  is just the classical DR algorithm model.
In 1979, Lions and Mercier \cite{b11Lions} made the algorithm applicable to the problem
with $A$ and $B$ being set-valued nonlinear operators.
They proved in a Hilbert space that the algorithm converges
weakly to a point which can be used to solve the problem.
Later, Svaiter \cite{Svaiter2011} revealed that the shadow sequence associated with DR algorithm is weakly convergent to a solution.
In 1992, Eckstein and Bertsekas \cite{b10Eckstein} further analyzed the DR algorithm with summable errors as well as with over/under relaxation.
Moreover,
interpretation of DR algorithm as a
proximal point method and an alternating direction method of multipliers (ADMM)
can be dated back to \cite{Lawrence1987} and \cite{Gabay1983}, respectively.

Compared with the rich literature for DR algorithms with the involved operators
being monotone or strongly monotone (see e.g. \cite{Dao-Phan2018jg,b12-1He,HeB2012,Phan2016}), the convergence theory for weakly monotone settings is far from
being complete. When $A$ and $B$  are the  subdifferentials of a strongly convex function and a weakly convex function respectively,
some nice works on convergence analysis for DR algorithms can be found in \cite{b12Bayram,K-Guo2018,c29Guo}.
Recently, Dao and Phan \cite{Dao-Phan2018} proposed the following adaptive DR algorithm for the
problem (\ref{SMP}):
\begin{equation}
    \label{iterative2}
    z_{k+1}=\tilde{T}z_k:=(1-\epsilon)z_k+\epsilon R_{\delta B}^{\mu} R_{\gamma A}^{\lambda}z_k,
\end{equation}
where $\epsilon \in (0,1)$, $\lambda, \mu, \gamma, \delta>0$ and
\begin{equation*}
\begin{aligned}
J_{\gamma A}=(\operatorname{I}+\gamma A)^{-1}, \quad& R_{\gamma A}^{\lambda}=(1-\lambda) \mathrm{I}+\lambda J_{\gamma A},
\\ J_{\delta B}=(\mathrm{I}+\delta B)^{-1}, \quad& R_{\delta B}^{\mu}=(1-\mu) \mathrm{I}+\mu J_{\delta B}.
\end{aligned}
\end{equation*}
As a more general scheme than (\ref{original-DPA}), the algorithm (\ref{iterative2}) has a nice flexibility thanks to its adjustable parameters.
This plays an important role in its convergence analysis.
With proper tuning the parameters, Dao and Phan \cite{Dao-Phan2018} proved the convergence of the sequence generated by  (\ref{iterative2}) under the conditions that $A$ is strongly monotone
while $B$ is weakly monotone. They further demonstrated that the algorithm enjoys a global
linear convergence rate under the additional condition that $A$ or $B$ is Lipschitz continuous. Dao and Phan's results generalized and improved several contemporary works such as \cite{b12Bayram,K-Guo2018,c29Guo}. For the methods for solving (\ref{SMP}), many of them (including the classical DR algorithm (\ref{original-DPA}), the adaptive DR algorithm (\ref{iterative2}), ADMM, and many others) can be cast as the Krasnosel'ski\u{i}-Mann fixed-point iteration
\begin{equation}\label{KM-ab1}
    u_{k+1}=\Theta_{k}\mathcal{T}u_{k}+\left(1-\Theta_{k}\right)u_{k},
\end{equation}
where $\Theta_{k}\in [0,1]$ and $\mathcal{T}:\mathbb{H}\to \mathbb{H}$.
For the results on  the convergence analysis for the  Krasnosel'ski\u{i}-Mann algorithm and its variations, we refer the reader to \cite{lipschitz,Cegielski2015,Cegielski2018,Krasn,Kolobov,Mann}. Among the existing  works, a great deal of attention has been paid to the convergence of the  Krasnosel'ski\u{i}-Mann algorithm and its variations. Recently, some papers are devoted to the study of  the convergence rate of the Krasnosel'ski\u{i}-Mann algorithm and its variations.
It was shown in \cite{Baillon,Cominetti} that the Krasnosel'ski\u{i}-Mann algorithm (\ref{KM-ab1}) built from a nonexpansive operator   enjoys the $O(\frac{1}{\sqrt{k}})$ rate of asymptotic regularity. Liang et al. \cite{Liang-2016} established both the pointwise and ergodic
convergence rates, as well as local linear convergence under a metric sub-regularity condition, for the the following inexact Krasnosel'ski\u{i}-Mann fixed-point iteration built from the nonexpansive operator $\mathcal{T}$:
\begin{equation}\label{3discret-ab1}
    u_{k+1}=\Theta_{k}\mathcal{T}u_{k}+\left(1-\Theta_{k}\right)u_{k}+\varepsilon_{k},
\end{equation}
where $\varepsilon_{k}$ is the error of approximating ${T}u_{k}$. As pointed out in \cite{Liang-2016}, the extension of the result of  \cite{Cominetti} to the inexact iteration is quite intricate because  the method of proof in \cite{Cominetti} relies on the recursive bound and exploits some properties of some special functions and an
identity for Catalan numbers, while these recursions are unfortunately not stable to errors. Bravo et al. \cite{Bravo-2019} extended  the result of  \cite{Liang-2016} to general Banach spaces.  For more results on the convergence rate of Krasnosel'ski\u{i}-Mann type algorithms, we refer the reader to \cite{Cegielski2015,Cegielski2018,Kolobov}.

In this paper, linked with the algorithm (\ref{3discret-ab1}), we propose the following nonautonomous evolution equation for solving the fixed point problem associated with $\mathcal{T}$:
\begin{equation}\label{f-dy-system}
 \left\{
             \begin{array}{lr}
             \frac{du}{dt}=\theta(t)[\mathcal{T}(u(t))-u(t)]+f(t)&  \\
             u(t_{0})=u_{0}\in\mathbb{H},&
             \end{array}
   \right.
\end{equation}
where $t_{0}>0$, $\theta:\mathbb{R}_{+}\rightarrow\mathbb{R}_{+}$ is a locally integrable function, and $f:\mathbb{R}_{+}\rightarrow\mathbb{H}$ is a locally integrable operator as a perturbation or computational error. The nonautonomous evolution equation (\ref{f-dy-system}) with $\mathcal{T}$ being a nonexpansive operator and $\theta(t)\equiv 1$ is just the nonautonomous evolution equation considered in \cite[Section 5]{Bravo-2019}. When $\mathcal{T}$ is nonexpansive and $f(t)\equiv 0$, (\ref{f-dy-system}) reduces to the dynamical system studied in \cite{c29Bot}. Historically, the study of continuous time dynamical systems for solving optimization problems
can be traced back at least to 1950s \cite{Arrow1957}. For more results on dynamical system approaches for solving optimization problems and related problems, we refer to \cite{c29Bot,c23Abbas,c24Abbas,c22Attouch,c26Bolte,c30Bot,c31Bot,2019Csetnek,ZM-OP}.

This paper is devoted to  investigating the asymptotic behavior of the trajectory of  the nonautonomous evolution equation (\ref{f-dy-system}) with $\mathcal{T}$ being a quasi-nonexpansive operator as the time tends to infinity. The results obtained are applied to analyze the convergence of the trajectories of the following  adaptive Douglas-Rachford dynamical system:
\begin{equation}\label{dynamic-system}
\left\{
             \begin{array}{lr}
            \frac{du}{dt}+\theta(t)\left[u(t)-R_{\delta B}^{\mu}R_{\gamma A}^{\lambda}(u(t))\right]=f(t), &  \\
             u(t_{0})=u_{0}\in\mathbb{H}.&
             \end{array}
   \right.
   \end{equation}
Let us mention that the above adaptive Douglas-Rachford dynamical system is linked with a relaxed and inexact version of the adaptive DR algorithm (\ref{iterative2}). Indeed, the explicit discretization of the  system (\ref{dynamic-system}) with respect to the time variable $t$, with step size $\Delta t_{k}>0$, yields the iterative scheme
\begin{equation}\label{discret-ds1}
   \frac{ u_{k+1}-u_{k}}{\Delta t_{k}}=\theta_{k}\left[R_{\delta B}^{\mu}R_{\gamma A}^{\lambda}(u_{k})-u_{k}\right]+f_{k}.
\end{equation}
After transposition and setting $\epsilon_{k}= \theta_{k}\Delta t_{k}$, the scheme (\ref{discret-ds1}) becomes
\begin{equation}
    \label{discretization-system}
   u_{k+1}=(1-\epsilon_{k})u_{k}+\epsilon_{k}R_{\delta B}^{\mu}R_{\gamma A}^{\lambda}(u_{k})+\Delta t_{k}f_{k}.
\end{equation}

Main contributions of this paper can be summarized as follows.
\begin{enumerate}
  \item [a)] We show that the trajectory of the nonautonomous evolution equation (\ref{f-dy-system}) governed by a quasi-nonexpansive operator satisfying demiclosedness principle converges weakly to a fixed point of the operator.  A flexible global exponential-type convergence rate is achieved under a metric subregularity condition. Related with the result of \cite{Liang-2016}  on the convergence rate of the inexact Krasnosel'ski\u{i}-Mann fixed-point iteration built from the nonexpansive operator, we derive continuous time analogs  for the asymptotic behaviors of the nonautonomous evolution equation governed by a quasi-nonexpansive operator.

  \item [b)] We study the adaptive Douglas-Rachford dynamical system (\ref{dynamic-system}) which endows the adaptive DR algorithm (\ref{iterative2}) proposed by Dao and Phan \cite{Dao-Phan2018} with computational errors and the continuous time behavior.  With suitable choices of parameters, the adaptive Douglas-Rachford dynamical system  is applied to solve the  problem (\ref{SMP}) in which one operator is strongly monotone and the other one is weakly monotone. The results obtained can be viewed as continuous time analogs with errors to the corresponding results of Dao and Phan \cite{Dao-Phan2018}.
  \end{enumerate}

The rest of this paper is organized as follows.
In Section \ref{sec.2}, we recall some notions and preliminary results for further analysis.
In Section \ref{sec.3}, we analyze the global weak convergence of the nonautonomous evolution equation (\ref{f-dy-system}), and establish a flexible globlal exponential-type convergence rate of the trajectory under a metric subregularity condition. In Section \ref{sec.4}, the results obtained in Section  \ref{sec.3} are applied to analyze the asymptotic behavior of  the adaptive Douglas-Rachford dynamical system (\ref{dynamic-system}). Finally,  the adaptive Douglas-Rachford dynamical system (\ref{dynamic-system}) is applied to solve a ``Strongly+Weakly" convex minimization problem.

\section{Preliminaries}
\label{sec.2}
In what follows, we always use  $w-\lim\limits_{t\rightarrow+\infty}x(t)=x^{*}$ to indicate that  $x(t)$
converges weakly to $x^{*}$ as $t\rightarrow+\infty$.

\begin{definition}\label{continuity}(See, e.g., \cite[Definition 4.1]{lipschitz} or \cite[Definition 9.1]{stronglyconvex})
A function $h: \Omega\subset\mathbb{H}\rightarrow\mathbb{H}$ is said to be
\begin{itemize}
\item[(i)]
 Lipschitz
continuous with constant $L>0$ on $\Omega$ if
\begin{equation*}
   \|h(y)-h(x)\|\leq L\|y-x\|,\quad \forall x,y\in \Omega;
\end{equation*}
\item[(ii)] nonexpansive on $\Omega$ if it is Lipschitz continuous with constant $1$ on $\Omega$,
i.e.,
\begin{equation*}
   \|h(y)-h(x)\|\leq \|y-x\|,\quad \forall x,y\in \Omega;
\end{equation*}
\item[(iii)] quasi-nonexpansive on $\Omega$ if
\begin{equation*}
   \|h(y)-x\|\leq \|y-x\|,\quad\quad\forall y\in \Omega,\forall x\in\text{Fix} (h):=\{x\in \Omega|h(x)=x\};
\end{equation*}
\item[(iv)] uniformly continuous on $\Omega$ if
for every real number $\varepsilon>0$ there exists $\varrho>0$ such that
for every $x, y \in \Omega$ with $\|x-y\|<\varrho$, we have that
$$\|h(x)-h(y)\|<\varepsilon.$$
\end{itemize}
\end{definition}

\begin{definition}\label{absolu-con}(See, e.g. \cite[Definition 2.1]{c22Attouch})
A function $F : [0, b]\rightarrow \mathbb{H}$ (where $b >0$) is said to
be absolutely continuous if one of the following equivalent properties holds:
\begin{itemize}
  \item [(i)] there exists an integrable function $G :[0, b]\rightarrow \mathbb{H}$ such that
  $$
F(t)=F(0)+\int_{0}^{t} G(s) ds, \quad \forall t \in[0, b];$$
 \item [(ii)]$F$ is continuous and its distributional derivative is Lebesgue integrable on $[0, b]$;

  \item [(ii)] for every $\varepsilon> 0$, there exists $\varrho > 0$
  such that for any finite family of intervals $I_{k} =(a_{k}, b_{k} )\subset [0,b]$ we have the implication:
  $$
\left(I_{k} \cap I_{j}=\emptyset \text{ and } \sum_{k}\left|b_{k}-a_{k}\right|<\varrho\right) \Longrightarrow \sum_{k}\left\|F\left(b_{k}\right)-F\left(a_{k}\right)\right\|<\varepsilon.
$$
\end{itemize}
\end{definition}

\begin{remark}\label{remark-1} Let us recall some basic nature of absolutely continuous functions:
\begin{itemize}
\item [(a)] In the light of the definitions above, Lipschitz
continuity implies absolute continuity, which gives rise to uniform continuity.
An absolutely continuous function is differentiable
almost everywhere.
\item [(b)]If $F_{1} : [0, b]\rightarrow \mathbb{H}$ is absolutely continuous and $F_{2}: \mathbb{H}\rightarrow \mathbb{H}$ is Lipschitz
continuous with constant $L$, then their composition function $F = F_{2}\circ F_{1}$ is absolutely continuous.
Moreover, $F$
is almost everywhere differentiable and the inequality
$\left\|F'(\cdot)\right\| \leq L\left\|F_{1}'(\cdot)\right\|$ holds almost
everywhere. (See, e.g. \cite[Remark 1]{c29Bot}).
\end{itemize}

\end{remark}

\begin{definition}\label{a-monotonicity}(See, e.g., \cite[Definition 3.1]{Dao-Phan2018}) A set-valued operator $A: \mathbb{H}\rightrightarrows\mathbb{H}$
is said to be $\alpha$-monotone with  constant $\alpha\in \mathbb{R}$ if,
\begin{equation*}\label{a-monotone}
\langle x-y, u-v\rangle \geq \alpha\|x-y\|^{2},\qquad \forall(x, u),(y, v) \in \text{gra}~A,
\end{equation*}
where
$$\text{gra}~A:=\{(x,u):u\in Ax\}.$$
Moreover, we also say that A is maximally $\alpha$-monotone
if it is $\alpha$-monotone and there is no $\alpha$-monotone operator whose graph strictly contains gra $A$.
\end{definition}

Apparently, $A$ is (resp. maximally) $\alpha$-monotone if and only if $A-\alpha I$ is (resp. maximally)
monotone. We also note that if $\alpha>0$, $\alpha=0$, $\alpha<0$, then
$\alpha$-monotonicity can be referred  as strong monotonicity, monotonicity and weak monotonicity, respectively.
In \cite[Example 12.28]{stronglyconvex}, the weak monotonicity is also called hypomonotonicity.
It was shown in  \cite{Dao-Phan2018} that $\alpha$-monotonicity of a single-valued operator along with continuity leads to maximal $\alpha$-monotonicity.
For detailed discussions on maximal
monotonicity and its variants as well as the connection to optimization problems, we refer the reader
to \cite{lipschitz,Borwein2010}.

The following lemma comes from \cite[Proposition 3.4, Corollary 3.11, Corollary 3.12]{Dao-Phan2018}.
\begin{lemma}\label{prox-lip1}
Let $A : \mathbb{H}\rightrightarrows\mathbb{H}$ be a
maximally $\alpha$-monotone operator and $R :=(1-\nu) \mathrm{I}+\nu J_{\gamma A}$ with $\nu,\gamma>0$.
Suppose that $1+\gamma \alpha>0$. Then the following statements are true:
\begin{itemize}
  \item [(i)] $J_{\gamma A}$ is single-valued and Lipschitz continuous (with constant $\frac{1}{1+\gamma \alpha}$).

  \item [(ii)]If $\nu\geq1$, then, for all $x, y \in\text{ dom } J_{\gamma A}:=\{z\in \mathbb{H}\mid J_{\gamma A}(z)\neq\emptyset \}$,
  $$\|R x-R y\|^{2} \leq (1-\nu)^{2}\|x-y\|^{2}+\nu[(1-\nu)(2+2\gamma \alpha)+\nu]\|J_{\gamma A}x-J_{\gamma A}y\|^{2}.$$

  \item [(iii)] If $A$ is single-valued and Lipschitz continuous with constant $l$, and $$\nu(1+2 \gamma \alpha)-2(1+\gamma \alpha) \geq 0,$$
 then $R$ is Lipschitz continuous with constant
$$
\sqrt{(\nu-1)^{2}-\frac{\nu((\nu-1)(2+2 \gamma \alpha)-\nu)}{1+2 \gamma \alpha+\gamma^{2} l^{2}}}.
$$
\end{itemize}
\end{lemma}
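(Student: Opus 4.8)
The three assertions build on one another, so the plan is to prove them in order, organized around a single cornerstone inequality. First, for arbitrary $x,y\in\mathbb{H}$ and $p\in J_{\gamma A}x$, $q\in J_{\gamma A}y$, the definition of the resolvent gives $\tfrac{1}{\gamma}(x-p)\in Ap$ and $\tfrac{1}{\gamma}(y-q)\in Aq$; feeding these two pairs into the $\alpha$-monotonicity of $A$ and rearranging produces
\begin{equation*}
\langle p-q,\,x-y\rangle\ \geq\ (1+\gamma\alpha)\|p-q\|^{2}.
\end{equation*}
For part (i), since $1+\gamma\alpha>0$, combining this with the Cauchy--Schwarz inequality gives $(1+\gamma\alpha)\|p-q\|^{2}\leq\|p-q\|\,\|x-y\|$, whence $\|p-q\|\leq\frac{1}{1+\gamma\alpha}\|x-y\|$; taking $x=y$ forces $p=q$, so $J_{\gamma A}$ is single-valued, and the same bound is the asserted Lipschitz estimate.

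For part (ii), I would write $Rx-Ry=(1-\nu)(x-y)+\nu(J_{\gamma A}x-J_{\gamma A}y)$ and expand
\begin{equation*}
\|Rx-Ry\|^{2}=(1-\nu)^{2}\|x-y\|^{2}+2\nu(1-\nu)\langle x-y,\,J_{\gamma A}x-J_{\gamma A}y\rangle+\nu^{2}\|J_{\gamma A}x-J_{\gamma A}y\|^{2}.
\end{equation*}
Since $\nu\geq1$, the coefficient $2\nu(1-\nu)$ is nonpositive, so the cornerstone lower bound on the inner product turns into an upper bound after multiplication; substituting it and collecting the $\|J_{\gamma A}x-J_{\gamma A}y\|^{2}$ terms yields exactly the stated inequality.

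Part (iii) is the delicate one, because its bound cannot be read off from (ii): the coefficient of $\|J_{\gamma A}x-J_{\gamma A}y\|^{2}$ there is nonpositive, so one would need a lower estimate of the form $\|x-y\|^{2}\leq(1+2\gamma\alpha+\gamma^{2}l^{2})\|p-q\|^{2}$, which in general fails. Instead I would use that $A$ is now single-valued, so $x-p=\gamma Ap$ and $y-q=\gamma Aq$, giving $x-y=(p-q)+\gamma w$ and $Rx-Ry=(p-q)+(1-\nu)\gamma w$ with $w:=Ap-Aq$. By homogeneity we may normalize $\|p-q\|=1$ (the case $x=y$ being trivial); writing $\rho:=\langle p-q,w\rangle$ and $\sigma:=\|w\|$, the $\alpha$-monotonicity and $l$-Lipschitz continuity of $A$ become $\rho\geq\alpha$ and $\sigma\leq l$, Cauchy--Schwarz gives $|\rho|\leq\sigma$, and both norms become explicit:
\begin{equation*}
\|x-y\|^{2}=1+2\gamma\rho+\gamma^{2}\sigma^{2},\qquad\|Rx-Ry\|^{2}=1+2(1-\nu)\gamma\rho+(1-\nu)^{2}\gamma^{2}\sigma^{2}.
\end{equation*}

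The main obstacle is that bounding the ratio $\|Rx-Ry\|^{2}/\|x-y\|^{2}$ over the admissible set is a genuine two-parameter optimization rather than a one-line estimate. I would show this ratio is nonincreasing in $\rho$---the condition $\nu(1+2\gamma\alpha)-2(1+\gamma\alpha)\geq0$ forces $\nu\geq1$, which makes the relevant derivative sign definite---and nondecreasing in $\sigma^{2}$, the latter holding \emph{precisely} when $\nu(1+2\gamma\alpha)-2(1+\gamma\alpha)\geq0$. Hence the ratio is maximized at the corner $\rho=\alpha$, $\sigma=l$, where a direct substitution returns
\begin{equation*}
(\nu-1)^{2}-\frac{\nu\big((\nu-1)(2+2\gamma\alpha)-\nu\big)}{1+2\gamma\alpha+\gamma^{2}l^{2}},
\end{equation*}
i.e. the square of the claimed Lipschitz constant. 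Equivalently, one can exhibit nonnegative multipliers for the monotonicity and Lipschitz constraints whose combination certifies the bound as an algebraic identity, the sign condition on $\nu$ being exactly what keeps those multipliers nonnegative.
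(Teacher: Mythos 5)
Your proof is correct, but note that there is no internal proof in the paper to compare it against: Lemma \ref{prox-lip1} is imported verbatim from Dao and Phan \cite{Dao-Phan2018} (their Proposition 3.4 and Corollaries 3.11 and 3.12), so any complete argument is necessarily a self-contained reconstruction of results the paper only cites. Your parts (i) and (ii) coincide with the standard resolvent computations behind those cited statements: the cornerstone inequality $\langle p-q,\,x-y\rangle\geq(1+\gamma\alpha)\|p-q\|^{2}$ is exactly $\alpha$-monotonicity read through the resolvent, and in (ii) the bookkeeping $2\nu(1-\nu)(1+\gamma\alpha)+\nu^{2}=\nu\left[(1-\nu)(2+2\gamma\alpha)+\nu\right]$ reproduces the stated constant. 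Part (iii) is where you add genuine content, and your diagnosis is right: under the hypothesis the coefficient of $\|J_{\gamma A}x-J_{\gamma A}y\|^{2}$ in (ii) is nonpositive, and the estimate one would need to close the loop, namely $\|x-y\|^{2}\leq(1+2\gamma\alpha+\gamma^{2}l^{2})\|J_{\gamma A}x-J_{\gamma A}y\|^{2}$, genuinely fails (take $A=lI$, which is $\alpha$-monotone for every $\alpha<l$ and $l$-Lipschitz; then $\|x-y\|^{2}=(1+\gamma l)^{2}\|p-q\|^{2}>(1+2\gamma\alpha+\gamma^{2}l^{2})\|p-q\|^{2}$). Your normalized two-parameter argument does work, and the claims you defer all check out: writing $F(\rho,s)$ for the ratio with $s=\sigma^{2}$, one finds $\partial_{\rho}F\propto-\nu\left(1+(\nu-1)\gamma^{2}s\right)\leq0$ once $\nu\geq1$, and $\partial_{s}F\propto\nu\left[(\nu-2)+2(\nu-1)\gamma\rho\right]$, which is nonnegative for all $\rho\geq\alpha$ exactly when $\nu(1+2\gamma\alpha)-2(1+\gamma\alpha)\geq0$; and the corner value $F(\alpha,l^{2})$ equals the claimed constant squared, since $(\nu-1)^{2}(1+2\gamma\alpha+\gamma^{2}l^{2})-\nu\left[(\nu-1)(2+2\gamma\alpha)-\nu\right]=1+2(1-\nu)\gamma\alpha+(\nu-1)^{2}\gamma^{2}l^{2}$. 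Your closing observation that the same bound can be certified by nonnegative multipliers on the two constraints is also correct and is arguably the cleanest way to write the final argument.

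Two details should be made explicit in a polished write-up; neither is a gap, since both follow in one line, but the argument is not complete without them. First, the hypothesis of (iii) together with the standing assumption $1+\gamma\alpha>0$ forces $1+2\gamma\alpha>0$ (because $\nu(1+2\gamma\alpha)\geq2(1+\gamma\alpha)>0$ and $\nu>0$) and hence $\nu\geq1+\frac{1}{1+2\gamma\alpha}>1$; this is precisely what makes both derivative signs definite, so it should be recorded before the monotonicity claims. Second, the same inequality $1+2\gamma\alpha>0$ gives $1+2\gamma\rho+\gamma^{2}s\geq1+2\gamma\alpha>0$ on the whole region $\rho\geq\alpha$, so $F$ is well defined and smooth along the paths $(\rho,s)\to(\alpha,s)\to(\alpha,l^{2})$ used to reach the corner; without this, the pointwise derivative signs would not by themselves yield the global comparison $F(\rho,s)\leq F(\alpha,l^{2})$.
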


\begin{lemma}\label{V-bounded}{\rm(\cite[Lemme A.5]{1972Brezis})}
Let  $F :[a, b]\rightarrow [0, +\infty)$ be an integrable function and $G :[a, b]\rightarrow \mathbb{R}$ be a continuous function. Suppose that $c \geq 0$
 and
$$
\frac{1}{2} G^{2}(t) \leq \frac{1}{2} c^{2}+\int_{a}^{b} F(s) G(s) ds
$$
for all $t\in[a, b]$. Then $|G(t)| \leq c+\int_{a}^{b} F(s) ds$ for all $t\in[a, b]$.
\end{lemma}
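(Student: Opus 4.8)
The statement is a continuous Gronwall--Bellman inequality of the Brezis type, and the natural reading (consistent with its use for a priori trajectory estimates) is that the integral on the right runs up to the current time $t$, so that the bound is genuinely a pointwise-in-$t$ estimate. My plan is to convert the quadratic inequality $\tfrac12 G^{2}(t)\le \tfrac12 c^{2}+\int_{a}^{t} F(s)G(s)\,ds$ into a linear differential inequality for the square root of a running majorant, which can then be integrated directly. The recurring obstruction in such arguments---that the majorant may vanish, making the square root non-differentiable, and that $G$ may change sign---will be neutralised at the outset by an $\varepsilon$-perturbation together with the replacement of $G$ by $|G|$.

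Concretely, I would fix $\varepsilon>0$ and set $\psi_{\varepsilon}(t):=\tfrac12(c^{2}+\varepsilon)+\int_{a}^{t} F(s)|G(s)|\,ds$. Since $FG\le F|G|$ we have $\tfrac12 G^{2}(t)\le \psi_{\varepsilon}(t)$, hence $|G(t)|\le\sqrt{2\psi_{\varepsilon}(t)}$. Because $G$ is continuous on the compact interval $[a,b]$ it is bounded, so $F|G|$ is integrable and $\psi_{\varepsilon}$ is absolutely continuous with $\psi_{\varepsilon}'(t)=F(t)|G(t)|$ almost everywhere (cf. Remark~\ref{remark-1}). The decisive gain from using $|G|$ rather than $G$ is that $\psi_{\varepsilon}$ is nondecreasing, so $\psi_{\varepsilon}(t)\ge\psi_{\varepsilon}(a)=\tfrac12(c^{2}+\varepsilon)>0$ for every $t$; the majorant is thus bounded away from zero, and $\sqrt{\psi_{\varepsilon}}$ is itself absolutely continuous, the square root being Lipschitz on $[\tfrac12(c^{2}+\varepsilon),\infty)$.

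The heart of the argument is then a one-line estimate on the derivative of the square root. Almost everywhere,
$$\frac{d}{dt}\sqrt{\psi_{\varepsilon}(t)}=\frac{\psi_{\varepsilon}'(t)}{2\sqrt{\psi_{\varepsilon}(t)}}=\frac{F(t)|G(t)|}{2\sqrt{\psi_{\varepsilon}(t)}}\le\frac{F(t)\sqrt{2\psi_{\varepsilon}(t)}}{2\sqrt{\psi_{\varepsilon}(t)}}=\frac{1}{\sqrt2}\,F(t),$$
where the inequality uses $|G(t)|\le\sqrt{2\psi_{\varepsilon}(t)}$. Integrating this linear bound from $a$ to $t$ and invoking the fundamental theorem of calculus for the absolutely continuous function $\sqrt{\psi_{\varepsilon}}$ gives $\sqrt{\psi_{\varepsilon}(t)}\le\sqrt{\tfrac12(c^{2}+\varepsilon)}+\tfrac{1}{\sqrt2}\int_{a}^{t} F(s)\,ds$. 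Multiplying by $\sqrt2$ and again using $|G(t)|\le\sqrt{2\psi_{\varepsilon}(t)}$ yields $|G(t)|\le\sqrt{c^{2}+\varepsilon}+\int_{a}^{t} F(s)\,ds$. Finally I would let $\varepsilon\downarrow0$ to obtain $|G(t)|\le c+\int_{a}^{t} F(s)\,ds$ for all $t\in[a,b]$, which is the claim. The only genuinely delicate points are the differentiability of the square root (handled by the $\varepsilon$-regularisation together with the monotonicity that keeps $\psi_{\varepsilon}$ strictly positive) and the sign of $G$ (handled by passing to $|G|$ through $FG\le F|G|$); everything else is routine integration.
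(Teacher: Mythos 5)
Your proof is correct, and it is essentially the classical argument for this lemma: the paper itself offers no proof (it simply cites Br\'ezis, Lemme A.5), whose original proof is exactly your scheme of an $\varepsilon$-perturbed majorant, differentiation of its square root, and integration via the fundamental theorem for absolutely continuous functions; your only (harmless) variant is putting $|G|$ rather than $G$ in the majorant, getting positivity from monotonicity, where Br\'ezis keeps the signed $G$ and gets positivity directly from the hypothesis $\tfrac12 G^{2}(t)\leq \tfrac12 c^{2}+\int_a^t FG\,ds$. You were also right to read the upper limit of integration as $t$ rather than $b$: the printed statement is a typo (with $\int_a^b$ it is false, e.g.\ $F\equiv 1$, $G\equiv 2$, $c=0$ on $[0,1]$), and the paper itself applies the lemma in Theorem 3.1 in the $\int_{t_0}^{t}$ form, which your stronger conclusion $|G(t)|\leq c+\int_a^t F(s)\,ds$ delivers.
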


\begin{lemma}\label{V-existence}{\rm(\cite[Lemma 5.1]{c23Abbas})}
Let  $F :[t_{0}, +\infty)\rightarrow \mathbb{R}$ be a locally absolutely continuous, bounded
below function, and  $G\in\mathbb{L}^{1}([t_{0},+\infty))$. Suppose that
$$
\frac{d}{d t} F(t) \leq G(t)
$$
for almost all $t$.
Then there exists $\lim\limits_{t \rightarrow +\infty} F(t) \in \mathbb{R}$.
\end{lemma}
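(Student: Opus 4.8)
The plan is to reduce the problem to the elementary fact that a nonincreasing function that is bounded below has a finite limit at infinity. The obstacle to applying this directly to $F$ is that $F$ need not be monotone; the differential inequality only controls $F'$ up to the perturbation $G$. So first I would subtract off the contribution of $G$. Define
\begin{equation*}
H(t) := F(t) - \int_{t_{0}}^{t} G(s)\, ds, \qquad t \in [t_{0}, +\infty).
\end{equation*}
Since $F$ is locally absolutely continuous and $t \mapsto \int_{t_{0}}^{t} G(s)\, ds$ is absolutely continuous on every compact subinterval (because $G \in \mathbb{L}^{1}$), the function $H$ is locally absolutely continuous, hence differentiable almost everywhere with $H'(t) = F'(t) - G(t)$ for almost all $t$.

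Next I would exploit the hypothesis. By assumption $F'(t) \leq G(t)$ for almost all $t$, so $H'(t) = F'(t) - G(t) \leq 0$ almost everywhere. Because $H$ is locally absolutely continuous, the fundamental theorem of calculus gives $H(t_{2}) - H(t_{1}) = \int_{t_{1}}^{t_{2}} H'(s)\, ds \leq 0$ for all $t_{0} \leq t_{1} \leq t_{2}$, i.e.\ $H$ is nonincreasing on $[t_{0}, +\infty)$. This monotonicity is the structural gain from removing the $G$-term.

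It then remains to bound $H$ from below so that the nonincreasing function has a finite (rather than $-\infty$) limit. Here I would use $G \in \mathbb{L}^{1}([t_{0}, +\infty))$ to get
\begin{equation*}
\left| \int_{t_{0}}^{t} G(s)\, ds \right| \leq \int_{t_{0}}^{+\infty} |G(s)|\, ds =: M < +\infty,
\end{equation*}
and combine it with the lower bound $F(t) \geq m$ coming from the boundedness-below hypothesis, yielding $H(t) \geq m - M$ for all $t$. A nonincreasing function bounded below converges, so $\ell := \lim_{t \to +\infty} H(t)$ exists in $\mathbb{R}$. Finally, since $\int_{t_{0}}^{t} G(s)\, ds \to \int_{t_{0}}^{+\infty} G(s)\, ds \in \mathbb{R}$ (again by integrability of $G$), I recover
\begin{equation*}
\lim_{t \to +\infty} F(t) = \ell + \int_{t_{0}}^{+\infty} G(s)\, ds \in \mathbb{R}.
\end{equation*}

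The routine parts are the convergence of the integral tail and the basic limit argument; the only step demanding genuine care is the passage from ``$H' \leq 0$ almost everywhere'' to ``$H$ is nonincreasing,'' since this is false for merely continuous functions and relies essentially on the (local) absolute continuity of $H$ via the fundamental theorem of calculus. Ensuring that $H$ inherits absolute continuity from $F$ and from the integrability of $G$ is therefore the crux, and everything else follows formally.
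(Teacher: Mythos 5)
Your proof is correct. The paper itself offers no proof of this lemma (it is quoted with a citation to \cite[Lemma 5.1]{c23Abbas}), and your argument — subtracting $\int_{t_{0}}^{t}G(s)\,ds$ to produce a nonincreasing, bounded-below function $H$, then using local absolute continuity to justify the fundamental theorem of calculus and the integrability of $G$ to recover the limit of $F$ — is precisely the standard proof given in that cited source, so there is nothing to add.
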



\begin{lemma}\label{Haraux} {\rm(See, e.g., \cite[Lemma 1.2.2]{Haraux2015})}
Let $t_{0}\in\mathbb{R}$ and $h :[t_{0}, +\infty)\rightarrow \mathbb{H}$ be a uniformly continuous function.
If $\int^{+\infty}_{t_{0}}\|h(s)\|ds<+\infty$,
then $$\lim\limits_{t\rightarrow+\infty} h(t)=0.$$
\end{lemma}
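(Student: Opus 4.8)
The plan is to argue by contradiction. Suppose $h(t)$ does not tend to $0$ as $t\to+\infty$. Then, negating the definition of the limit, there exist a constant $\varepsilon_0>0$ and a sequence $(t_n)\subset[t_0,+\infty)$ with $t_n\to+\infty$ such that $\|h(t_n)\|\geq\varepsilon_0$ for every $n$. My goal is to show that this forces $\int_{t_0}^{+\infty}\|h(s)\|\,ds=+\infty$, contradicting the hypothesis.

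First I would invoke uniform continuity to upgrade the pointwise lower bound at the points $t_n$ to a lower bound on whole intervals. Applying uniform continuity with tolerance $\varepsilon_0/2$ yields a single $\delta>0$ (independent of $n$) such that $|s-t|<\delta$ implies $\|h(s)-h(t)\|<\varepsilon_0/2$. Consequently, for every $s$ in the interval $I_n:=[\,t_n-\delta/2,\;t_n+\delta/2\,]$ the reverse triangle inequality gives $\|h(s)\|\geq\|h(t_n)\|-\|h(s)-h(t_n)\|>\varepsilon_0/2$. Thus $h$ stays uniformly bounded away from zero on each $I_n$, a ``bump'' of fixed width $\delta$ and fixed height $\varepsilon_0/2$.

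Next, since $t_n\to+\infty$, I would pass to a subsequence (still denoted $(t_n)$) chosen so that $t_{n+1}\geq t_n+\delta$ and $t_1-\delta/2\geq t_0$; this makes the intervals $I_n$ pairwise disjoint and contained in $[t_0,+\infty)$. Summing the integral over these disjoint bumps then gives
\[
\int_{t_0}^{+\infty}\|h(s)\|\,ds\;\geq\;\sum_{n}\int_{I_n}\|h(s)\|\,ds\;\geq\;\sum_{n}\delta\cdot\frac{\varepsilon_0}{2}\;=\;+\infty,
\]
which contradicts the assumed finiteness $\int_{t_0}^{+\infty}\|h(s)\|\,ds<+\infty$. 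Hence the assumption fails and $\lim_{t\to+\infty}h(t)=0$.

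The step I expect to require the most care is the interplay between uniform continuity and the disjointness of the intervals. Uniform continuity is precisely what guarantees a single $\delta$ valid at every $t_n$, so that all the bumps share the same minimum width; extracting a subsequence with gaps of at least $\delta$ is what ensures the bumps are disjoint, so their integral contributions genuinely accumulate to $+\infty$. Without uniform continuity the widths could shrink as $n$ grows and the resulting series might converge, so the uniformity of $\delta$ is the crux of the argument.
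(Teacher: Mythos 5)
Your proof is correct. The paper does not actually prove this lemma---it is quoted verbatim from Haraux and Jendoubi \cite[Lemma 1.2.2]{Haraux2015}---and your contradiction argument (uniform continuity producing bumps of fixed width $\delta$ and height $\varepsilon_0/2$ around the points $t_n$, then disjointness of the bumps forcing the integral to diverge) is precisely the standard proof of this classical result, often stated as Barbalat's lemma, so there is nothing to reconcile with the paper's text.
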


\begin{lemma}\label{opial} {\rm(See, e.g., \cite[Lemma 5.3]{c23Abbas})}
Let $\Omega\subseteq\mathbb{H}$ be a nonempty set and $x :[t_{0}, +\infty)\rightarrow \mathbb{H}$ be a given map.
Suppose
\begin{itemize}
    \item [(i)] for every $z\in \Omega$, $\lim\limits_{t\rightarrow+\infty}\|x(t)-z\|$ exists;
    \item [(ii)] every weak sequential cluster point of the map $x$ belongs to $\Omega$.
  \end{itemize}
Then there exists $x_{\infty}\in \Omega$ such that $w-\lim\limits_{t\rightarrow+\infty}x(t)=x_{\infty}$.
\end{lemma}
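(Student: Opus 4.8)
The plan is to follow the classical Opial argument, adapted to the continuous time parameter $t\to+\infty$. The strategy is to show that the map $x$ possesses exactly one weak sequential cluster point and that this point lies in $\Omega$; weak convergence of the whole trajectory then follows from boundedness together with weak sequential compactness in $\mathbb{H}$.

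First I would establish boundedness of the trajectory. Since $\Omega\neq\emptyset$, fix any $z_{0}\in\Omega$. By hypothesis (i) the limit $\lim\limits_{t\rightarrow+\infty}\|x(t)-z_{0}\|$ exists and is finite, so $t\mapsto\|x(t)-z_{0}\|$ is bounded on $[t_{0},+\infty)$, whence $x$ is bounded. Consequently, for any sequence $t_{n}\rightarrow+\infty$ the sequence $(x(t_{n}))$ is bounded and, by the weak sequential compactness of bounded sets in the Hilbert space $\mathbb{H}$, admits a weakly convergent subsequence. Thus the set of weak sequential cluster points of $x$ is nonempty, and by hypothesis (ii) it is contained in $\Omega$.

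The heart of the argument is the uniqueness of the weak cluster point. Let $x_{1}$ and $x_{2}$ be two weak sequential cluster points; by (ii) both belong to $\Omega$, so by (i) the limits $\lim\limits_{t\rightarrow+\infty}\|x(t)-x_{1}\|$ and $\lim\limits_{t\rightarrow+\infty}\|x(t)-x_{2}\|$ exist. Expanding the norms I would use the identity
\[
\|x(t)-x_{1}\|^{2}-\|x(t)-x_{2}\|^{2}=2\langle x(t),x_{2}-x_{1}\rangle+\|x_{1}\|^{2}-\|x_{2}\|^{2},
\]
whose left-hand side therefore has a limit as $t\rightarrow+\infty$; hence $t\mapsto\langle x(t),x_{2}-x_{1}\rangle$ converges. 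Choosing sequences $t_{n}\rightarrow+\infty$ with $x(t_{n})\rightharpoonup x_{1}$ and $s_{n}\rightarrow+\infty$ with $x(s_{n})\rightharpoonup x_{2}$ and passing to the limit in this inner product along each sequence yields $\langle x_{1},x_{2}-x_{1}\rangle=\langle x_{2},x_{2}-x_{1}\rangle$, that is $\|x_{2}-x_{1}\|^{2}=0$, so $x_{1}=x_{2}$. Denote the unique cluster point by $x_{\infty}\in\Omega$.

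Finally I would upgrade ``unique cluster point'' to weak convergence of the full trajectory. Suppose $x(t)$ does not weakly converge to $x_{\infty}$; then there exist some $\xi\in\mathbb{H}$, a number $\eta>0$, and a sequence $t_{n}\rightarrow+\infty$ with $|\langle x(t_{n})-x_{\infty},\xi\rangle|\geq\eta$. By boundedness, $(x(t_{n}))$ has a weakly convergent subsequence, whose limit is a weak cluster point and hence equals $x_{\infty}$ by uniqueness; but then $\langle x(t_{n})-x_{\infty},\xi\rangle\rightarrow0$ along that subsequence, a contradiction. Therefore $w-\lim\limits_{t\rightarrow+\infty}x(t)=x_{\infty}\in\Omega$. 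The only delicate point is the uniqueness step, where the expansion identity above must be exploited exactly as indicated; the remaining arguments are routine consequences of boundedness and of weak sequential compactness in $\mathbb{H}$.
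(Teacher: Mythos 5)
Your proof is correct and takes the standard route: the paper itself gives no proof of this lemma (it is quoted from \cite{c23Abbas}), and the argument there is essentially identical to yours --- boundedness from (i), uniqueness of the weak sequential cluster point via the expansion identity $\|x(t)-x_{1}\|^{2}-\|x(t)-x_{2}\|^{2}=2\langle x(t),x_{2}-x_{1}\rangle+\|x_{1}\|^{2}-\|x_{2}\|^{2}$, and weak sequential compactness of bounded sets in $\mathbb{H}$. One negligible imprecision: since $x$ is only ``a given map'' (no continuity is assumed), hypothesis (i) yields boundedness of $\|x(t)-z_{0}\|$ only on some tail $[T,+\infty)$ rather than on all of $[t_{0},+\infty)$, but tail boundedness is all your extraction and cluster-point arguments require.
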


\begin{definition}\label{metrically-subregular}(See \cite[p.183]{Metric-subregularity})
 A set-valued operator $F:\mathbb{H}\rightrightarrows\mathbb{H}$ is said to be metrically subregular at $z^{*}$ for $y^{*}$ with modulus $\kappa$ if $y^{*}\in F(z^{*})$ and
there exists $\kappa\geq0$,
along with an open ball $B(z^{*},r):=\{y\in\mathbb{H}\mid\|y-z^{*}\|<r\}$, such that
\begin{equation*}\label{metrical-sub}
\text{dist}(z,F^{-1}(y^{*}))\leq \kappa\cdot\text{dist}(y^{*},F(z)),\quad\forall\,z\in B(z^{*},r),
\end{equation*}
where $\text{dist}(z,\Omega)=\min\limits_{y\in\Omega}\|z-y\|$.
\end{definition}

\begin{remark}
For a set-valued operator $F$ and a vector $y^{*}$, as Dontchev and Rockafellar pointed out in \cite{Metric-subregularity}, metric subregularity gives an estimate
for how far a point $z$ is from being a solution to the generalized equation $F(z)\ni y^{*}$
in terms of the ``residual" $\text{dist}(y^{*},F(z))$.
The constant $\kappa$ measures the stability under perturbations of inclusion $y^{*}\in F(z)$.
Metric subregularity and its variants have been widely to establish linear convergence rates of the algorithms.  See e.g. \cite{c29Guo,Cegielski2015,Cegielski2018,Liang-2016,Bravo-2019}.
\end{remark}

\begin{definition}\label{demi-closed}(See, e.g., \cite[Definition 4.26]{lipschitz})
We say that an operator $F: \mathbb{H}\rightarrow\mathbb{H}$  satisfies the demiclosedness principle if $I-F$ is  demiclosed at $0$, i.e., for any sequence $\{x_k\}$,
 $$ w-\lim\limits_{k\rightarrow+\infty}x_k=x^* \text{ and } \|F(x_{k})-x_k\| \to 0 \Rightarrow F(x^{*})=x^*.$$

\end{definition}

\begin{remark}\label{demiclose}
Every nonexpansive operator satisfies the demiclosedness principle(see, e.g., \cite[Theorem 4.27]{lipschitz}), while a quasi-nonexpansive operator need not to satisfy such property. In fact, the demiclosedness principle plays an important role in the convergence analysis for fixed point iteration algorithms built from quasi-nonexpansive operators (see e.g., \cite{Cegielski2015,Cegielski2018,Kolobov}). A closely related concept is the weak regularity introduced in \cite[Definition 3.1]{Cegielski2018} and \cite[Definition 12]{Kolobov} which implies the demiclosedness principle. Further examples of quasi-nonexpansive operators satisfying the demiclosedness principle can be found in \cite[Lemma 3.1]{Wang2011} and \cite[Lemma 4.1]{CegielskiJOTA}.
\end{remark}

Before we finish this section, let us recall a useful identity,
which will be used several times in the following sections.
Its proof is straightforward and so we omit it. 
For all $x,y\in\mathbb{H}$ and all $\epsilon,\varrho\in\mathbb{R}$,
\begin{equation}\label{a-identity}
\|\epsilon x+\varrho y\|^{2}=\epsilon(\epsilon+\varrho)\|x\|^{2}+\varrho(\epsilon+\varrho)\|y\|^{2}-\epsilon \varrho\|x-y\|^{2}.
\end{equation}

Now we give a global weak convergence result on the trajectory.

\section{Asymptotic analysis for nonautonomous evolution  equations}
\label{sec.3}
In this section, we analyze the convergence of the trajectory of the nonautonomous evolution  equation (\ref{f-dy-system}). In what follows, unless otherwise stated,  we always assume that $\theta:[t_{0},+\infty)\rightarrow\mathbb{R}_{+}$  is locally integrable and $f\in\mathbb{L}^{1}([t_{0},+\infty))$. Moreover, the following two assumptions about $\theta$ are used
in the rest of the paper and refer to them when appropriate.
\begin{equation*}
(\textbf{A1}): \inf\limits_{t\in[t_{0},+\infty)}\theta(t)>0;\qquad(\textbf{A2}):\int^{+\infty}_{t_{0}}\theta(s)ds=+\infty.
\end{equation*}
It is clear that (\textbf{A1}) implies (\textbf{A2}).

Recall that  $u: [t_{0},+\infty)\rightarrow\mathbb{H}$ is a strong global solution of the equation (\ref{f-dy-system})
if and only if the following properties are satisfied:
\begin{itemize}
  \item [(i)] $u: \mathbb{R}_{+}\rightarrow\mathbb{H}$ is absolutely continuous on each interval $[t_{0},b]$, $t_{0}<b<+\infty$;

  \item [(ii)]$\frac{du}{dt}=\theta(t)[\mathcal{T}(u(t))-u(t)]+f(t)$ for almost every $t\geq t_{0}$;

  \item [(iii)] $u(t_{0})=u_{0}$.
\end{itemize}

\begin{theorem}\label{3th3ad}
Let $\mathcal{T}: \mathbb{H}\rightarrow\mathbb{H}$ be a quasi-nonexpansive operator with $\text{Fix}(\mathcal{T})\neq\emptyset$ and let $u(t)$ be a strong global solution  of the equation (\ref{f-dy-system}).
Then the following statements are true:
\begin{itemize}
      \item [(i)]$\int^{+\infty}_{t_{0}}\theta(s)\|\mathcal{T}u(s)-u(s)\|^{2}ds<+\infty$.

      \item [(ii)]If $\mathcal{T}$ is uniformly continuous on $\mathbb{H}$ and assumption (\textbf{A1}) holds,
      then
      \begin{equation}\label{ab-th3.1-s0}
      \lim\limits_{t\rightarrow+\infty}\|\mathcal{T}u(t)-u(t)\|=0.
      \end{equation}

      \item [(iii)] If the equation (\ref{ab-th3.1-s0}) holds and  $\mathcal{T}$ satisfies the demiclosedness principle, then there exists $\hat{u}\in \text{Fix}(\mathcal{T})$ such that $w-\lim\limits_{t\rightarrow+\infty}u(t)=\hat{u}$.
    \end{itemize}
\end{theorem}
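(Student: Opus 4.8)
The plan is to run a Lyapunov analysis anchored at an arbitrary fixed point. For (i), fix $x\in\text{Fix}(\mathcal{T})$ and set $V(t)=\tfrac12\|u(t)-x\|^2$; since $u$ is absolutely continuous and $v\mapsto\tfrac12\|v-x\|^2$ is locally Lipschitz, $V$ is locally absolutely continuous (Remark \ref{remark-1}(b)) with $\dot V(t)=\langle u(t)-x,\dot u(t)\rangle$ for almost every $t$. Substituting the equation gives $\dot V=\theta\langle u-x,\mathcal{T}u-u\rangle+\langle u-x,f\rangle$. The key algebraic step is to expand $\|\mathcal{T}u-x\|^2=\|(\mathcal{T}u-u)+(u-x)\|^2$ (a special instance of identity (\ref{a-identity})) and invoke quasi-nonexpansiveness $\|\mathcal{T}u-x\|\le\|u-x\|$ to obtain the pointwise inequality $\langle u-x,\mathcal{T}u-u\rangle\le-\tfrac12\|\mathcal{T}u-u\|^2$. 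Hence $\dot V\le-\tfrac{\theta}{2}\|\mathcal{T}u-u\|^2+\|u-x\|\,\|f\|$. Integrating and discarding the nonnegative integral yields $\tfrac12\|u(t)-x\|^2\le\tfrac12\|u_0-x\|^2+\int_{t_0}^{t}\|f(s)\|\,\|u(s)-x\|\,ds$, to which Lemma \ref{V-bounded} applies on each $[t_0,b]$, giving the uniform bound $\|u(t)-x\|\le\|u_0-x\|+\|f\|_{L^1}=:M$. Feeding this bound back into the integrated inequality bounds $\tfrac12\int_{t_0}^{t}\theta\|\mathcal{T}u-u\|^2$ by $V(t_0)+M\|f\|_{L^1}$ uniformly in $t$, and letting $t\to+\infty$ proves (i).

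For (ii), assumption (\textbf{A1}) converts (i) into $\int_{t_0}^{+\infty}\|\mathcal{T}u-u\|^2\,dt<+\infty$, while the triangle inequality gives $g(t):=\|\mathcal{T}u(t)-u(t)\|\le 2M$. I would like to deduce $g\to0$ by applying Haraux's Lemma \ref{Haraux} to $g^2$, which is integrable. The main obstacle is uniform continuity: Lemma \ref{Haraux} needs $g^2$, hence $u$, to be uniformly continuous, but since $\theta$ is only bounded below and not above, $\dot u=\theta(\mathcal{T}u-u)+f$ need not be bounded or integrable, so $u$ may fail to be uniformly continuous in $t$. I would resolve this by passing to the intrinsic time $\tau(t)=\int_{t_0}^{t}\theta(s)\,ds$, which under (\textbf{A1}) is an increasing bi-Lipschitz bijection of $[t_0,+\infty)$ onto $[0,+\infty)$, so that $\tau\to+\infty\iff t\to+\infty$. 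In this scale $v(\tau)=u(t(\tau))$ satisfies $v'=\mathcal{T}v-v+\tilde f$ with $\tilde f=f/\theta$, and the change of variables gives $\int|\tilde f|\,d\tau=\|f\|_{L^1}<+\infty$ and $\int\|\mathcal{T}v-v\|^2\,d\tau=\int\theta\|\mathcal{T}u-u\|^2\,dt<+\infty$. A Cauchy--Schwarz estimate on $v'=\mathcal{T}v-v+\tilde f$ (an $L^2$ term plus an $L^1$ term) makes $v$ uniformly continuous in $\tau$; since $\mathcal{T}$ is uniformly continuous, so are $\mathcal{T}v-v$ and its squared norm, and Lemma \ref{Haraux} forces $\|\mathcal{T}v(\tau)-v(\tau)\|\to0$, which is exactly (\ref{ab-th3.1-s0}).

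For (iii) I would verify the two hypotheses of Opial's Lemma \ref{opial} with $\Omega=\text{Fix}(\mathcal{T})$. For the first, fix $z\in\text{Fix}(\mathcal{T})$ and repeat the computation of the first paragraph with $z$ in place of $x$ to obtain $\tfrac{d}{dt}\big(\tfrac12\|u-z\|^2\big)\le\|u-z\|\,\|f\|\le M_z\|f\|$, where $\|u(t)-z\|\le M+\|x-z\|=:M_z$ is bounded by (i); since the right-hand side lies in $L^1$ and $\tfrac12\|u-z\|^2$ is bounded below and locally absolutely continuous, Lemma \ref{V-existence} guarantees that $\lim_{t\to+\infty}\|u(t)-z\|$ exists. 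For the second, boundedness of the trajectory gives weak sequential cluster points; if $u(t_n)\rightharpoonup\hat u$ with $t_n\to+\infty$, then (\ref{ab-th3.1-s0}) gives $\|\mathcal{T}u(t_n)-u(t_n)\|\to0$, so the demiclosedness principle (Definition \ref{demi-closed}) yields $\mathcal{T}\hat u=\hat u$, i.e. $\hat u\in\text{Fix}(\mathcal{T})$. Lemma \ref{opial} then produces $\hat u\in\text{Fix}(\mathcal{T})$ with $w\text{-}\lim_{t\to+\infty}u(t)=\hat u$. The only genuinely delicate point in the whole argument is the uniform-continuity step in (ii); parts (i) and (iii) are routine Lyapunov/Opial bookkeeping once the fixed-point inequality $\langle u-x,\mathcal{T}u-u\rangle\le-\tfrac12\|\mathcal{T}u-u\|^2$ is established.
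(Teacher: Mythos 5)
Your proposal is correct, and parts (i) and (iii) follow essentially the same route as the paper: the same Lyapunov function anchored at an arbitrary fixed point, the same quasi-nonexpansiveness inequality $\langle u-x,\mathcal{T}u-u\rangle\le-\tfrac12\|\mathcal{T}u-u\|^2$, the Br\'ezis-type Lemma \ref{V-bounded} for boundedness, Lemma \ref{V-existence} for the existence of $\lim_{t\to+\infty}\|u(t)-z\|$, and Opial's Lemma \ref{opial} combined with demiclosedness. Where you genuinely diverge is part (ii), and your version is more careful than the paper's. The paper simply asserts that absolute continuity of the trajectory makes $\mathcal{T}u(t)-u(t)$ uniformly continuous in $t$ and then invokes Lemma \ref{Haraux} in the original time variable; but, as you point out, assumption (\textbf{A1}) only bounds $\theta$ from below, so $\dot u=\theta(\mathcal{T}u-u)+f$ can be unbounded and $u$ need not be uniformly continuous on $[t_0,+\infty)$ --- the paper's step is only immediate under an additional upper bound on $\theta$. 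Your repair via the intrinsic time $\tau(t)=\int_{t_0}^{t}\theta(s)\,ds$ is exactly the right device: in that scale the equation reads $v'=\mathcal{T}v-v+\tilde f$ with $\tilde f\in\mathbb{L}^{1}$ and $\mathcal{T}v-v\in\mathbb{L}^{2}$ (by part (i)), the Cauchy--Schwarz bound plus absolute continuity of the $\mathbb{L}^{1}$ integral gives uniform continuity of $v$, and Lemma \ref{Haraux} applied to $\|\mathcal{T}v-v\|^{2}$ (which is uniformly continuous because $\mathcal{T}v-v$ is bounded, using quasi-nonexpansiveness and boundedness of $v$) yields (\ref{ab-th3.1-s0}); since $\tau\to+\infty$ iff $t\to+\infty$, the conclusion transfers back. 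So your argument proves the theorem in the full generality in which it is stated, at the cost of one extra change of variables. One cosmetic slip: the map $t\mapsto\tau(t)$ is not bi-Lipschitz in general (only its inverse is Lipschitz, with constant $1/\inf\theta$, since $\theta$ may be unbounded above), but nothing in your argument actually uses the forward Lipschitz property, only that $\tau$ is an increasing absolutely continuous bijection onto $[0,+\infty)$ with Lipschitz inverse.
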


\begin{proof}
Taking arbitrarily  $u^{*}\in\text{Fix}(\mathcal{T})$, we consider the following auxiliary function:
\begin{equation*}\label{ab-th3.1-s1}
V(t)=\|u(t)-u^{*}\|^{2}.
\end{equation*}
Computing the time derivative of $V(t)$,
we have
\begin{eqnarray}\label{ab-th3.1-s2}
\frac{dV}{dt}&=&2\left\langle u(t)-u^{*}, \frac{du}{dt}\right\rangle\nonumber \\
&=&2\theta(t)\left\langle u(t)-u^{*},\mathcal{T}(u(t))-u(t)\right\rangle+2\langle u(t)-u^{*},f(t)\rangle\nonumber \\
&=&\theta(t)\left(\|\mathcal{T}(u(t))-u^{*}\|^{2}-\|\mathcal{T}(u(t))-u(t)\|^{2}-\|u(t)-u^{*}\|^{2}\right)\nonumber \\
& &+2\langle u(t)-u^{*},f(t)\rangle\nonumber \\
&\leq&-\theta(t)\|\mathcal{T}(u(t))-u(t)\|^{2}+2\langle u(t)-u^{*},f(t)\rangle\nonumber \\
&\leq&-\theta(t)\|\mathcal{T}(u(t))-u(t)\|^{2}+2\|u(t)-u^{*}\|\|f(t)\|,
\end{eqnarray}
where the first inequality is obtained by the quasi-nonexpansiveness of $\mathcal{T}$.
Integrating this inequality from $t_{0}$ to $t$, we have
\begin{eqnarray}\label{ab-th3.1-s3}
V(t)-V(t_{0})
&\leq&-\int^{t}_{t_{0}}\theta(s)\|\mathcal{T}(u(s))-u(s)\|^{2}ds\nonumber\\
&&+2\int^{t}_{t_{0}}\|u(s)-u^{*}\|\|f(s)\|ds,
\end{eqnarray}
which gives
\begin{eqnarray}\label{ab-th3.1-s4}
V(t)\leq V(t_{0})+ 2\int^{t}_{t_{0}}\|u(s)-u^{*}\|\|f(s)\|ds.
\end{eqnarray}
According to Lemma \ref{V-bounded},
we obtain from (\ref{ab-th3.1-s4}) that
\begin{eqnarray*}\label{ab-th3.1-s5}
\|u(t)-u^{*}\| \leq \|u_{0}-u^{*}\|+\int^{t}_{t_{0}}\|f(s)\|ds,
\end{eqnarray*}
which implies that the trajectory $u(t)$ is bounded on $[t_{0},+\infty)$ due to $f\in\mathbb{L}^{1}([t_{0},+\infty))$.
Hence, there exists $\mathcal{M}\in\mathbb{R}_{++}$ such that for all $t$,
\begin{eqnarray}\label{ab-th3.1-s6}
\|u(t)-u^{*}\|\leq \mathcal{M}.
\end{eqnarray}
In the rest of the proof, we verify the statements (i)-(iii) in order.
\begin{itemize}
  \item [a)]It is from (\ref{ab-th3.1-s3}) and (\ref{ab-th3.1-s6}) that
  \begin{eqnarray*}
  \int^{t}_{t_{0}}\theta(s)\|\mathcal{T}(u(s))-u(s)\|^{2}ds\leq V(u_{0})+2\mathcal{M}\int^{t}_{t_{0}}\|f(s)\|ds.
  \end{eqnarray*}
  Taking $t\rightarrow+\infty$ and noting that $f\in\mathbb{L}^{1}([t_{0},+\infty))$, we get (i).

  \item [b)]Suppose that $\mathcal{T}$ is uniformly continuous on $\mathbb{H}$ and the assumption (\textbf{A1}) holds.
  Then, the latter along with (i) yields
  \begin{eqnarray*}
  \int^{+\infty}_{t_{0}}\|\mathcal{T}u(s)-u(s)\|^{2}ds<+\infty.
  \end{eqnarray*}
  On the other hand,
  by the absolute continuity of the trajectory $u(t)$, $\mathcal{T}u(t)$ is uniformly continuous with respect to $t$, so is $\mathcal{T}u(t)-u(t)$.
  Consequently, according to Lemma \ref{Haraux}, we deduct (\ref{ab-th3.1-s0}) directly,
  and so (ii) holds.
  \item [c)] We verify the last assertion via Lemma \ref{opial}.
  Firstly, suppose that the equation (\ref{ab-th3.1-s0}) holds, and that $\mathcal{T}$ satisfies the demiclosedness principle.
  From (\ref{ab-th3.1-s2}) and (\ref{ab-th3.1-s6}),
  we have
   $$\frac{d}{dt}\|u(t)-u^{*}\|^{2}\leq2\mathcal{M}\|f(t)\|,$$
  which gives rise to the existence of
  $\lim\limits_{n\rightarrow+\infty}\|u(t)-u^{*}\|$ by Lemma \ref{V-existence}.
  Since $u^{*}\in \text{Fix} (\mathcal{T})$ has been chosen arbitrary,
  the first assumption in Lemma \ref{opial} is fulfilled.
  On the other hand,
  in view of the boundedness of the trajectory $u(t)$, let $\hat{u}\in\mathbb{H}$ be a weak sequential
  cluster point of $u(t)$, that is, there exists a sequence $t_{n}\rightarrow+\infty$ (as $n\rightarrow+\infty$) such that
  $w-\lim\limits_{n\rightarrow+\infty}u(t_{n})=\hat{u}$.
  Applying the demiclosedness principle of $\mathcal{T}$ and (\ref{ab-th3.1-s0}),
  we have $\hat{u}\in\text{Fix}\mathcal{T}$, and the assertion (iii) follows from Lemma \ref{opial}.
  The proof is complete.
\end{itemize}
\end{proof}

\begin{remark}
Theorem \ref{3th3ad} generalizes \cite[Theorem 6]{c29Bot} where the weak convergence of the trajectory of (\ref{f-dy-system}) with $\mathcal{T}$ being nonexpansive and $f(t)\equiv 0$ was established. The extension of \cite[Theorem 6]{c29Bot} to  Theorem \ref{3th3ad} is nontrivial because  the method of proof   of \cite[Theorem 6]{c29Bot} relies on the decreasing of $V(t)$ on $[t_0,+\infty)$ to guarantee the existence of $\lim_{t\to\infty} V(t)$ while the proof of Theorem \ref{3th3ad} relies on  Lemma \ref{V-existence} since  the decreasing of $V(t)$ is unfortunately not satisfied when $f(t)\not\equiv 0$.
\end{remark}
%
%
%

The following result is concerned with the weak convergence of the equation (\ref{f-dy-system}) when $\mathcal{T}$ has a splitting structure.
\begin{theorem}\label{3th3ad3}
Let $u(t)$ be a strong global solution  of the equation (\ref{f-dy-system}) and let $\mathcal{T}_{1}$, $\mathcal{T}_{2}:\mathbb{H}\rightarrow\mathbb{H}$ be two operators. Suppose that  the assumption (\textbf{A1}) holds and the following conditions are satisfied:
\begin{itemize}
  \item [$(H_{1})$] $\mathcal{T}_{1}$, $\mathcal{T}_{2}$ and $\mathcal{T}$ are uniformly continuous, single-valued and have full domain.
  \item [$(H_{2})$] $I-\mathcal{T}=\mu\left(\mathcal{T}_{1}-\mathcal{T}_{2}\right)$, $\mu\neq0$.
  \item [$(H_{3})$] $\text{Fix}(\mathcal{T})\neq\emptyset$.
  \item [$(H_{4})$] For all $x\in \mathbb{H}$ and $x^{*}\in\text{Fix}(\mathcal{T})$,
  \begin{eqnarray}\label{3th3ad-s1}
   \|\mathcal{T}x-x^{*}\|^{2}&\leq &\| x-x^{*}\|^{2}-\omega_{0}\|(I-\mathcal{T}) x \|^{2}\nonumber \\
      &&-\omega_{1}\left\|\mathcal{T}_{1} x-\mathcal{T}_{1} x^{*}\right\|^{2}
       -\omega_{2}\left\|\mathcal{T}_{2}x-\mathcal{T}_{2}x^{*}\right\|^{2},
  \end{eqnarray}
  where $w_{i}\in\mathbb{R},\,i=0,1,2,$ subject to
\begin{equation}\label{3th3ad3-a4}
\begin{array}{ll}
{\mbox { either }}  &{\left\{\omega_{1}=\omega_{2}=0\right\}\quad\mbox{and}\quad \omega_{0}>0};\\
 {\mbox { or }}  &{\left\{\omega_{1}+\omega_{2}>0\, \mbox { and }\,
\omega_{0}+\frac{\omega_{1} \omega_{2}}{\mu^{2}\left(\omega_{2}+\omega_{2}\right)}>0\right\}}.
\end{array}
\end{equation}
\end{itemize}
Then for any $u^{*}\in\text{Fix}(\mathcal{T})$,
the following statements hold:
\begin{itemize}
\item[(i)] $\int^{+\infty}_{t_{0}}\|\mathcal{T}u(s)-u(s)\|^{2}ds<+\infty$.

\item[(ii)]$\lim\limits_{t\rightarrow+\infty}\|\mathcal{T}u(t)-u(t)\|=0$.

\item[(iii)] If $\mathcal{T}$ satisfies the demiclosedness principle, then
there exists $\hat{u}\in \text{Fix}(\mathcal{T})$
such that $w-\lim\limits_{t\rightarrow+\infty}u(t)=\hat{u}$.

\item[(iv)]If $\omega_{1}+\omega_{2}>0$, then\\
$\int^{+\infty}_{t_{0}}\left\|\omega_{1}\left(\mathcal{T}_{1}u(s)-\mathcal{T}_{1} u^{*}\right)
+\omega_{2}\left(\mathcal{T}_{2} u(s)-\mathcal{T}_{2}u^{*} \right)\right\|^{2}ds<+\infty$,\\
$\lim\limits_{t\rightarrow+\infty}\left\|\omega_{1}\left(\mathcal{T}_{1}u(t)-\mathcal{T}_{1} u^{*}\right)
+\omega_{2}\left(\mathcal{T}_{2} u(t)-\mathcal{T}_{2}u^{*} \right)\right\|=0$,\\
and $\lim\limits_{t\rightarrow+\infty}\mathcal{T}_{1}u(t)=\mathcal{T}_{1}
u^{*}=\lim\limits_{t\rightarrow+\infty}\mathcal{T}_{2}u(t)=\mathcal{T}_{2}u^{*}$.
\end{itemize}
\end{theorem}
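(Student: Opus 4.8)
The plan is to reduce the splitting hypotheses to the quasi-nonexpansiveness required by Theorem \ref{3th3ad}, and then to refine the Lyapunov computation to extract the additional information in (iv). Throughout, fix $u^{*}\in\text{Fix}(\mathcal{T})$ and abbreviate $a(t)=\mathcal{T}_{1}u(t)-\mathcal{T}_{1}u^{*}$ and $b(t)=\mathcal{T}_{2}u(t)-\mathcal{T}_{2}u^{*}$. Evaluating $(H_{2})$ at $u^{*}$ gives $\mathcal{T}_{1}u^{*}=\mathcal{T}_{2}u^{*}$ (as $\mu\neq0$), so that $(I-\mathcal{T})u=\mu(a-b)$ and $\|(I-\mathcal{T})u\|^{2}=\mu^{2}\|a-b\|^{2}$. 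The algebraic heart of the argument is the identity, valid whenever $\omega_{1}+\omega_{2}>0$,
\begin{equation*}
\omega_{1}\|a\|^{2}+\omega_{2}\|b\|^{2}=\frac{1}{\omega_{1}+\omega_{2}}\|\omega_{1}a+\omega_{2}b\|^{2}+\frac{\omega_{1}\omega_{2}}{\omega_{1}+\omega_{2}}\|a-b\|^{2},
\end{equation*}
which is a direct consequence of (\ref{a-identity}).

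Substituting this identity and $\|(I-\mathcal{T})u\|^{2}=\mu^{2}\|a-b\|^{2}$ into the right-hand side of $(H_{4})$, the three correction terms collapse to
\begin{equation*}
\mu^{2}\left(\omega_{0}+\frac{\omega_{1}\omega_{2}}{\mu^{2}(\omega_{1}+\omega_{2})}\right)\|a-b\|^{2}+\frac{1}{\omega_{1}+\omega_{2}}\|\omega_{1}a+\omega_{2}b\|^{2},
\end{equation*}
whose two coefficients are both strictly positive precisely under the second alternative in (\ref{3th3ad3-a4}); the first alternative $\omega_{1}=\omega_{2}=0$, $\omega_{0}>0$ is handled directly. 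In either case the correction is nonnegative, so $(H_{4})$ forces $\mathcal{T}$ to be quasi-nonexpansive. Combined with $(H_{1})$ and assumption $(\textbf{A1})$, statements (i)--(iii) then follow from Theorem \ref{3th3ad}: part (i) there yields $\int_{t_{0}}^{+\infty}\theta(s)\|\mathcal{T}u(s)-u(s)\|^{2}ds<+\infty$, and $(\textbf{A1})$ lets me drop the weight $\theta$ to obtain (i) here; (ii) and (iii) transfer verbatim, the latter using the assumed demiclosedness principle.

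For (iv) I would repeat the differentiation of $V(t)=\|u(t)-u^{*}\|^{2}$ exactly as in the proof of Theorem \ref{3th3ad}, but retain the refined lower bound above instead of discarding it. This produces
\begin{equation*}
\frac{dV}{dt}\leq-\theta(t)\left[c_{1}\|\mathcal{T}u(t)-u(t)\|^{2}+c_{2}\|\omega_{1}a(t)+\omega_{2}b(t)\|^{2}\right]+2\|u(t)-u^{*}\|\,\|f(t)\|,
\end{equation*}
with $c_{1}=1+\omega_{0}+\tfrac{\omega_{1}\omega_{2}}{\mu^{2}(\omega_{1}+\omega_{2})}>0$ and $c_{2}=\tfrac{1}{\omega_{1}+\omega_{2}}>0$. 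Integrating, using the boundedness $\|u(t)-u^{*}\|\leq\mathcal{M}$ established as in Theorem \ref{3th3ad} together with $f\in\mathbb{L}^{1}([t_{0},+\infty))$, bounds $\int_{t_{0}}^{+\infty}\theta(s)\|\omega_{1}a(s)+\omega_{2}b(s)\|^{2}ds$; then $(\textbf{A1})$ removes $\theta$ and gives the first assertion. For the second assertion I would apply Lemma \ref{Haraux} to the scalar map $s\mapsto\|\omega_{1}a(s)+\omega_{2}b(s)\|^{2}$, which is uniformly continuous because $(H_{1})$ makes $\mathcal{T}_{1}u(\cdot)$ and $\mathcal{T}_{2}u(\cdot)$ uniformly continuous (the uniformly continuous $\mathcal{T}_{i}$ composed with the absolutely continuous $u$) and bounded. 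The third assertion is then an elementary limit argument: part (ii) gives $\|a-b\|=|\mu|^{-1}\|\mathcal{T}u-u\|\to0$, while the second assertion gives $\omega_{1}a+\omega_{2}b\to0$; since $\omega_{1}+\omega_{2}>0$, these two force $a\to0$ and $b\to0$, i.e. $\mathcal{T}_{1}u(t)\to\mathcal{T}_{1}u^{*}$ and $\mathcal{T}_{2}u(t)\to\mathcal{T}_{2}u^{*}$, with $\mathcal{T}_{1}u^{*}=\mathcal{T}_{2}u^{*}$ by $(H_{2})$.

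The main obstacle is really the only non-mechanical step: discovering the decomposition identity above and recognizing that the otherwise opaque constraint (\ref{3th3ad3-a4}) is exactly the requirement that both resulting coefficients be positive. Once that identity is in hand, (i)--(iii) are a clean reduction to Theorem \ref{3th3ad}, and (iv) is a bookkeeping refinement of the same Lyapunov estimate followed by the short limit argument.
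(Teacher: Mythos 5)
Your proof is correct and takes essentially the same route as the paper: the same decomposition identity (a consequence of (\ref{a-identity}) together with $(H_{2})$) is used to show that $(H_{4})$ with (\ref{3th3ad3-a4}) forces $\mathcal{T}$ to be quasi-nonexpansive, so that (i)--(iii) reduce to Theorem \ref{3th3ad}, and (iv) is obtained from the same refined Lyapunov estimate, integration under (\textbf{A1}), Lemma \ref{Haraux} applied to the scalar square, and the identical two-limit argument combining $a-b\to 0$ with $\omega_{1}a+\omega_{2}b\to 0$. If anything, your bookkeeping is slightly more careful than the paper's display (\ref{3th3ad3-s4}), which silently drops the $\omega_{0}\|(I-\mathcal{T})u(t)\|^{2}$ term (problematic when $\omega_{0}<0$), whereas you retain it and correctly arrive at the coefficient $1+\omega_{0}+\frac{\omega_{1}\omega_{2}}{\mu^{2}(\omega_{1}+\omega_{2})}>0$.
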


\begin{proof}
Set
$$
\omega_{1}^{\prime} :=\left\{\begin{array}{ll}{\frac{\omega_{1} \omega_{2}}{\omega_{1}+\omega_{2}}} & {\text { if } \omega_{1}+\omega_{2}>0,} \\
 {0} & {\text { if } \omega_{1}=\omega_{2}=0}\end{array} \text { and } \omega_{2}^{\prime} :=\left\{\begin{array}{ll}{\frac{1}{\omega_{1}+\omega_{2}}} & {\text { if } \omega_{1}+\omega_{2}>0} \\
 {0} & {\text { if } \omega_{1}=\omega_{2}=0.}
 \end{array}\right.\right.
$$
Then, by (\ref{3th3ad3-a4}),
\begin{eqnarray}\label{3th3ad-s1-0}
\omega_{0}+\frac{\omega_{1}^{\prime}}{\mu^{2}}>0 \quad\text{ and }\quad \omega_{2}^{\prime} \geq 0.
\end{eqnarray}
For all $x, y\in\mathbb{H}$, we derive from (\ref{a-identity}) and the assumption $(H_{2})$ that
\begin{eqnarray}\label{3th3ad-s1-1}
&&\omega_{1}\left\|\mathcal{T}_{1} x-\mathcal{T}_{1} y\right\|^{2}+\omega_{2}\left\|\mathcal{T}_{2} x-\mathcal{T}_{2} y\right\|^{2} \nonumber\\
&=&\omega_{1}^{\prime}\left\|\left(\mathcal{T}_{1}-\mathcal{T}_{2}\right) x-\left(\mathcal{T}_{1}-\mathcal{T}_{2}\right) y\right\|^{2}\nonumber\\
&&+\omega_{2}^{\prime}\left\|\omega_{1}\left(\mathcal{T}_{1} x-\mathcal{T}_{1} y\right)
+\omega_{2}\left(\mathcal{T}_{2} x-\mathcal{T}_{2} y\right)\right\|^{2} \nonumber\\
&=& \frac{\omega_{1}^{\prime}}{\mu^{2}}\|(I-\mathcal{T}) x-(I-\mathcal{T}) y\|^{2}\nonumber\\
&&+\omega_{2}^{\prime}\left\|\omega_{1}\left(\mathcal{T}_{1} x-\mathcal{T}_{1} y\right)
+\omega_{2}\left(\mathcal{T}_{2} x-\mathcal{T}_{2} y\right)\right\|^{2}.
\end{eqnarray}
This combines with (\ref{3th3ad-s1}) and (\ref{3th3ad-s1-0}) implies that $\mathcal{T}$ is quasi-nonexpansive.
So (i),(ii) and (iii) hold by Theorem \ref{3th3ad}.

Next, to verify (iv), we consider the auxiliary function:
$
V(t)=\|u(t)-u^{*}\|^{2}.
$
Computing the time derivative of $V(t)$,
we have
\begin{eqnarray}\label{3th3ad3-s3}
\frac{dV}{dt}&=&2\left\langle u(t)-u^{*}, \frac{du}{dt}\right\rangle\nonumber \\
&=&2\left\langle u(t)-u^{*}, \theta(t)\left(\mathcal{T}u(t)-u(t)\right)\right\rangle+2\langle u(t)-u^{*},f(t)\rangle\nonumber \\
&=&\left\|u(t)-u^{*}+\theta(t)\left(\mathcal{T}u(t)-u(t)\right)\right\|^{2}-\|u(t)-u^{*}\|^{2}\nonumber \\
& &-\theta^{2}(t)\|\mathcal{T}u(t)-u(t)\|^{2}+2\langle u(t)-u^{*},f(t)\rangle\nonumber \\
&=&\left\|\theta(t)\left(\mathcal{T}u(t)-u^{*}\right)+(1-\theta(t))(u(t)-u^{*})\right\|^{2}-\|u(t)-u^{*}\|^{2}\nonumber \\
& &-\theta^{2}(t)\left\|\mathcal{T}u(t)-u(t)\right\|^{2}+2\langle u(t)-u^{*},f(t)\rangle\nonumber \\
&\overset{(\ref{a-identity})}{=}&\theta(t)\left\|\mathcal{T}u(t)-u^{*}\right\|^{2}+(1-\theta(t))\|u(t)-u^{*}\|^{2}-\|u(t)-u^{*}\|^{2}\nonumber \\
& &-\theta(t)(1-\theta(t))\left\|\mathcal{T}u(t)-u(t)\right\|^{2}-\theta^{2}(t)\left\|\mathcal{T}u(t)-u(t)\right\|^{2}\nonumber \\
& &+2\langle u(t)-u^{*},f(t)\rangle\nonumber \\
&\leq&\theta(t)\left\|\mathcal{T}u(t)-\mathcal{T}(u^{*})\right\|^{2}-\theta(t)\|u(t)-u^{*}\|^{2}\nonumber \\
& &-\theta(t)\left\|\mathcal{T}u(t)-u(t)\right\|^{2}+2\|u(t)-u^{*}\|\|f(t)\|.
\end{eqnarray}
By (\ref{3th3ad-s1}) and (\ref{3th3ad-s1-1}), it ensues that
\begin{eqnarray}\label{3th3ad3-s4}
\frac{dV}{dt}&\overset{(\ref{3th3ad-s1})}{\leq}&-\omega_{1}\theta(t)\left\|\mathcal{T}_{1}u(t)-\mathcal{T}_{1} u^{*}\right\|^{2}
-\omega_{2}\theta(t)\left\|\mathcal{T}_{2}u(t)-\mathcal{T}_{2}u^{*} \right\|^{2}\nonumber \\
& &-\theta(t)\left\|\mathcal{T}u(t)-u(t)\right\|^{2}
+2\|u(t)-u^{*}\|\|f(t)\|\nonumber \\
&\leq&-\theta(t)\left(\omega_{1}\left\|\mathcal{T}_{1}u(t)-\mathcal{T}_{1} u^{*}\right\|^{2}
+\omega_{2}\left\|\mathcal{T}_{2}u(t)-\mathcal{T}_{2}u^{*} \right\|^{2}\right)\nonumber \\
& &-\theta(t)\left\|\mathcal{T}u(t)-u(t)\right\|^{2}
+2\|u(t)-u^{*}\|\|f(t)\|\nonumber \\
&\overset{(\ref{3th3ad-s1-1})}{=}& -\theta(t)\frac{\omega_{1}^{\prime}}{\mu^{2}}\|(I-\mathcal{T}) u(t)
-(I-\mathcal{T}) u^{*}\|^{2}\nonumber\\
&&-\theta(t)\omega_{2}^{\prime}\left\|\omega_{1}\left(\mathcal{T}_{1} u(t)-\mathcal{T}_{1} y\right)+\omega_{2}\left(\mathcal{T}_{2} u(t)-\mathcal{T}_{2} u^{*}\right)\right\|^{2}\nonumber \\
& &-\theta(t)\left\|\mathcal{T}u(t)-u(t)\right\|^{2}+2\|u(t)-u^{*}\|\|f(t)\|\nonumber \\
&=&-\theta(t)\omega_{2}^{\prime}\left\|\omega_{1}\left(\mathcal{T}_{1} u(t)-\mathcal{T}_{1}u^{*}\right)
+\omega_{2}\left(\mathcal{T}_{2}u(t)-\mathcal{T}_{2} u^{*}\right)\right\|^{2}\nonumber\\
&& -\theta(t)\left(1+\frac{\omega_{1}^{\prime}}{\mu^{2}}\right)\|\mathcal{T}u(t)-u(t)\|^{2}+2\|u(t)-u^{*}\|\|f(t)\|.
\end{eqnarray}
If $\omega_{1}+\omega_{2}>0$, then $\omega_{2}^{\prime}=\frac{1}{\omega_{1}+\omega_{2}}>0$.
Similar to the proof of Theorem \ref{3th3ad} (i) and Theorem \ref{3th3ad} (ii),
we get the first two conclusions of the assertion (iv). Let us verify the last one.
Observer that $\mathcal{T}_{1} u^{*}=\mathcal{T}_{2} u^{*}$ thanks to the assumption $(H_{2})$ and $u^{*}\in\text{Fix}(\mathcal{T})$.
It is from (ii) and the assumption $(H_{2})$  that
  $$\lim\limits_{t\rightarrow+\infty}\left[\mathcal{T}_{1}u(t)-\mathcal{T}_{1} u^{*}
-\left(\mathcal{T}_{2} u(t)-\mathcal{T}_{2}u^{*} \right)\right]=\lim\limits_{t\rightarrow+\infty}\frac{1}{\mu}(\mathcal{T}u(t)-u(t))=0.$$
Together with the second conclusion in (iv) and noting that $\omega_{1}+\omega_{2}>0$, we derive $\lim\limits_{t\rightarrow+\infty}\mathcal{T}_{1}u(t)=\mathcal{T}_{1} u^{*}$
and $\lim\limits_{t\rightarrow+\infty}\mathcal{T}_{2} u(t)=\mathcal{T}_{2}u^{*}$. So (iv) follows.
The proof is complete.
\end{proof}

\begin{remark}
In Theorem \ref{3th3ad}  and Theorem \ref{3th3ad3}, we do not consider the existence of  strong global solutions of the equation (\ref{f-dy-system}). In fact, the existence of  strong global solutions of  the equation (\ref{f-dy-system}) can be ensured by  the Cauchy-Lipschitz theorem (see e.g. \cite[Corollary 2.6]{Teschl-stability}) under an additional assumption that  $\mathcal{T}$ is Lipschitz continuous. The existence of solutions of nonautonomous evolution equations governed by quasi-nonexpansive operators without Lipschitz assumptions is a nontrivial topic and it is not our concerning in this paper.
\end{remark}

Next, we investigate the global exponential-type convergence of the trajectory of the equation (\ref{f-dy-system}) under the metric subregularity condition. Given a strong global solution $u(t)$ of  (\ref{f-dy-system}), notice that  under suitable conditions, $\lim\limits_{t\rightarrow+\infty}\|u(t)- u^*\|$ exists for any $u^*\in {\rm Fix}(\mathcal{T})$ and $u(t)$ converges weakly to a fixed point of $\mathcal{T}$(see  Theorem \ref{3th3ad}). Then we have the following result.

\begin{theorem}\label{3th4ad}
Suppose that  $\mathcal{T}: \mathbb{H}\rightarrow\mathbb{H}$ be a quasi-nonexpansive  operator with $\text{Fix}(\mathcal{T})\neq\emptyset$  and that $\mathcal{T}$ satisfies the demiclosedness principle.
Let $u(t)$  be a strong global solution of the equation (\ref{f-dy-system}) and $\hat{u}\in{\rm Fix}(\mathcal{T})$  such that $w-\lim\limits_{t\rightarrow+\infty}u(t)=\hat{u}\in {\rm Fix}(\mathcal{T})$.
If $I-\mathcal{T}$ is metrically subregular at $\hat{u}$ for $0$ with a ball $B(\hat{u},r)$ and modulus $\kappa$, and $r>\lim\limits_{t\rightarrow+\infty}\|u(t)-\hat{u}\|$,
then
there exist $t'\geq t_{0}$ and $M_{1}>0$ such that for all $t\geq t'$
  \begin{eqnarray}\label{3th4.2ad-s1}
  &&{\rm dist}^{2}(u(t),{\rm Fix}(\mathcal{T}))\nonumber\\
  &\leq&
  e^{-\frac{1}{\kappa^{2}}\int^{t}_{t'}\theta(s)ds}
  \left(M_{1}\int^{t}_{t'}\|f(s)\|e^{\frac{1}{\kappa^{2}}\int^{s}_{t'}\theta(x)dx}ds+\|u(t')-\hat{u}\|^{2}\right).
  \end{eqnarray}
\end{theorem}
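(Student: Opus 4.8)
The plan is to convert the metric subregularity estimate into a linear differential inequality for the squared distance $\phi(t):=\mathrm{dist}^{2}(u(t),\mathrm{Fix}(\mathcal{T}))$ and then integrate it by a Gr\"onwall-type integrating factor. Before anything else I would record two structural facts about $C:=\mathrm{Fix}(\mathcal{T})$. Since $\mathcal{T}$ is quasi-nonexpansive, $C$ is convex: for $z$ on the segment joining two fixed points $x_{1},x_{2}$, the estimates $\|\mathcal{T}z-x_{i}\|\le\|z-x_{i}\|$ saturate the triangle inequality and force $\mathcal{T}z=z$. Since $\mathcal{T}$ satisfies the demiclosedness principle, $C$ is closed: a strong limit of fixed points is a weak limit along which $\|\mathcal{T}x_{k}-x_{k}\|=0$, hence a fixed point. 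Consequently the metric projection $P_{C}$ is well defined, $\nabla\!\big(\tfrac12\mathrm{dist}^{2}(\cdot,C)\big)=I-P_{C}$, and $\phi$ is absolutely continuous on bounded intervals with $\dot\phi(t)=2\langle u(t)-P_{C}(u(t)),\dot u(t)\rangle$ for almost every $t$ (chain rule for the composition of the $C^{1}$ function $\tfrac12\mathrm{dist}^{2}(\cdot,C)$, whose gradient is nonexpansive, with the absolutely continuous trajectory, via Remark \ref{remark-1}(b)).

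Next, since $\lim_{t\to+\infty}\|u(t)-\hat u\|$ exists by Theorem \ref{3th3ad} and is assumed strictly less than $r$, there is $t'\ge t_{0}$ with $u(t)\in B(\hat u,r)$ for all $t\ge t'$. On this range, metric subregularity of $I-\mathcal{T}$ at $\hat u$ for $0$, together with $(I-\mathcal{T})^{-1}(0)=C$ and $\mathrm{dist}(0,(I-\mathcal{T})(z))=\|\mathcal{T}z-z\|$ (as $\mathcal{T}$ is single-valued), gives $\phi(t)\le\kappa^{2}\|\mathcal{T}u(t)-u(t)\|^{2}$, i.e. $\|\mathcal{T}u(t)-u(t)\|^{2}\ge\tfrac{1}{\kappa^{2}}\phi(t)$. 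Writing $u^{\ast}=P_{C}(u(t))\in C$ and substituting $\dot u(t)=\theta(t)[\mathcal{T}u(t)-u(t)]+f(t)$ into $\dot\phi$, I would expand $2\langle u(t)-u^{\ast},\mathcal{T}u(t)-u(t)\rangle=\|\mathcal{T}u(t)-u^{\ast}\|^{2}-\|u(t)-u^{\ast}\|^{2}-\|\mathcal{T}u(t)-u(t)\|^{2}$ and apply quasi-nonexpansiveness ($\|\mathcal{T}u(t)-u^{\ast}\|\le\|u(t)-u^{\ast}\|$) to bound this inner product by $-\|\mathcal{T}u(t)-u(t)\|^{2}$. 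Combining with Cauchy--Schwarz $\langle u(t)-u^{\ast},f(t)\rangle\le\mathrm{dist}(u(t),C)\|f(t)\|$ and the boundedness of the trajectory (so $\mathrm{dist}(u(t),C)\le\mathcal{M}$, as established in the proof of Theorem \ref{3th3ad}), I obtain $\dot\phi(t)\le-\tfrac{\theta(t)}{\kappa^{2}}\phi(t)+M_{1}\|f(t)\|$ for almost every $t\ge t'$, with $M_{1}=2\mathcal{M}$.

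To finish, I would multiply through by the integrating factor $e^{\frac{1}{\kappa^{2}}\int_{t'}^{t}\theta(s)\,ds}$, turning the left-hand side into the derivative of $e^{\frac{1}{\kappa^{2}}\int_{t'}^{t}\theta}\,\phi(t)$, and integrate from $t'$ to $t$ (legitimate since all factors are absolutely continuous on bounded intervals). This yields precisely the claimed bound once I insert $\phi(t')=\mathrm{dist}^{2}(u(t'),C)\le\|u(t')-\hat u\|^{2}$, which holds because $\hat u\in C$.

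The step demanding the most care is the differentiation of $\phi(t)=\mathrm{dist}^{2}(u(t),C)$: this is exactly what forces the detour through the closed convexity of $\mathrm{Fix}(\mathcal{T})$ and the $C^{1}$ regularity of the squared distance to a convex set. It is indispensable rather than cosmetic, since a fixed-anchor Lyapunov function $\|u(t)-\hat u\|^{2}$ would leave $\mathrm{dist}^{2}$ on the right-hand side (entering with the wrong sign to be absorbed) and never close into a self-contained inequality in a single quantity. Everything else is a routine Gr\"onwall computation.
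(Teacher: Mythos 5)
Your proposal is correct and follows essentially the same route as the paper's proof: the same Lyapunov function $\mathrm{dist}^{2}(u(t),\mathrm{Fix}(\mathcal{T}))$, the same use of metric subregularity inside the ball $B(\hat{u},r)$ for $t\geq t'$ to obtain the differential inequality $\dot{\phi}(t)\leq-\tfrac{\theta(t)}{\kappa^{2}}\phi(t)+2\mathcal{M}\|f(t)\|$, and the same integrating-factor (Gr\"onwall) integration with $\phi(t')\leq\|u(t')-\hat{u}\|^{2}$. The only cosmetic difference is that you prove the closed convexity of $\mathrm{Fix}(\mathcal{T})$ and the chain rule for the squared distance from first principles, whereas the paper cites Proposition 4.23 and Corollary 12.31 of Bauschke--Combettes for these facts.
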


\begin{proof}
Owing to $r>\lim\limits_{t\rightarrow+\infty}\|u(t)-\hat{u}\|$,
there exists $t'\geq t_{0}$ such that $u(t)\in B(\hat{u},r)$ for all $t\geq t'$.
Since $I-\mathcal{T}$ is metrically subregular at $\hat{u}\in\text{Fix}(\mathcal{T})$ for $0$ with the ball
$B(\hat{u},r)$ and modulus $\kappa$, noting $(I-\mathcal{T})^{-1}(0)=\text{Fix}(\mathcal{T})$,
we obtain
\begin{equation}\label{th4.1-1}
\text{dist}(u(t),\text{Fix}(\mathcal{T}))\leq \kappa\|u(t)-\mathcal{T}(u(t))\|,\quad\forall\,t\geq t'.
\end{equation}
On the other hand,
consider the following function:
\begin{equation*}\label{3th4.3-p2}
V_{P}(t)=\text{dist}^{2}(u(t),\text{Fix}(\mathcal{T}))=\|u(t)-P_{Fix(\mathcal{T})}(u(t))\|^{2}.
\end{equation*}
Since $Fix(\mathcal{T})$ is closed and convex by Proposition 4.23 in \cite{lipschitz}, the metric projection $P_{Fix(\mathcal{T})}$ is well defined.
Note that $P_{Fix(\mathcal{T})}(u(t))\in\text{Fix}(\mathcal{T})$ and $$\frac{dV_{P}}{dt}=2\left\langle u(t)-P_{Fix(\mathcal{T})}(u(t)),\frac{du}{dt}\right\rangle$$
by Corollary 12.31 in \cite{lipschitz}.
Therefore, we can replace $\hat{u}$ by $P_{Fix(\mathcal{T})}(u(t))$ in (\ref{ab-th3.1-s2})
to obtain
\begin{eqnarray*}\label{3th4.3-2-2}
\frac{dV_{P}}{dt}
&\leq&-\theta(t)\|\mathcal{T}u(t)-u(t)\|^{2}
+2\left\|u(t)-P_{Fix(\mathcal{T})}(u(t))\right\|\|f(t)\|\nonumber \\
&\overset{(\ref{th4.1-1})}{\leq}&-\frac{\theta(t)}{\kappa^{2}}V_{P}(t)+2\left\|u(t)-P_{Fix(\mathcal{T})}(u(t))\right\|\|f(t)\|\nonumber \\
&\overset{(\ref{ab-th3.1-s6})}{\leq}&-\frac{\theta(t)}{\kappa^{2}}V_{P}(t)+2\mathcal{M}\|f(t)\|,\quad\forall\,t\geq t'.
\end{eqnarray*}
Multiplying this equation by $e^{\frac{1}{\kappa^{2}}\int^{t}_{t'}\theta(s)ds}$,
  and rearranging the terms, we obtain
  \begin{eqnarray*}
  \frac{dV_{P}}{dt}e^{\frac{1}{\kappa^{2}}\int^{t}_{t'}\theta(s)ds}
  +\frac{\theta(t)}{\kappa^{2}}V_{P}(t)e^{\frac{1}{\kappa^{2}}\int^{t}_{t'}\theta(s)ds}
  \leq 2\mathcal{M}\|f(t)\|e^{\frac{1}{\kappa^{2}}\int^{t}_{t'}\theta(s)ds},
  \end{eqnarray*}
  that is,
  \begin{eqnarray*}
  \frac{d}{dt}\left(V_{P}(t)e^{\frac{1}{\kappa^{2}}\int^{t}_{t'}\theta(s)ds}\right)
  \leq 2\mathcal{M}\|f(t)\|e^{\frac{1}{\kappa^{2}}\int^{t}_{t'}\theta(s)ds}.
  \end{eqnarray*}
  Integrating from $t'$ to $t$, we have
  \begin{eqnarray*}
  V_{P}(t)e^{\frac{1}{\kappa^{2}}\int^{t}_{t'}\theta(s)ds}
  \leq 2\mathcal{M}\int^{t}_{t'}\|f(s)\|e^{\frac{1}{\kappa^{2}}\int^{s}_{t'}\theta(x)dx}ds
  +V_{P}(t'),
  \end{eqnarray*}
  which contributes to (\ref{3th4.2ad-s1}) with
  $M_{1}=2\mathcal{M}$.
This completes the proof.\end{proof}

\begin{remark}\label{3remark-rate} (i) When $\mathbb{H}$ is finite-dimensional, the condition $r>\lim\limits_{t\rightarrow+\infty}\|u(t)-\hat{u}\|$
  is satisfied automatically. (ii) Formula (\ref{3th4.2ad-s1}) gives a flexible exponential convergence rate (by  suitable choices of $\theta$ and $f$):
\begin{itemize}
    \item [a)] If $\theta(t)\equiv \Theta>0$ and $f(t)=\frac{1}{t^{p}}e^{-\frac{\Theta}{\kappa^{2}}t}$, $p>1$, then
  $${\rm dist}(u(t),{\rm Fix}(\mathcal{T}))
  =O\left(e^{-\frac{\Theta}{2\kappa^{2}}t}\right),$$
  which is an exponential convergence rate.
  \item [b)] If $\theta(t)=t^{m}$ with $m>-1$ and $f(t)=\frac{1}{t^{p}}e^{-\frac{t^{m+1}}{\kappa^{2}(m+1)}}$ with $p>1$, then
  $${\rm dist}(u(t),{\rm Fix}(\mathcal{T}))
  =O\left(e^{-\frac{t^{m+1}}{2\kappa^{2}(m+1)}}\right).$$
  \item [c)] If $\theta(t)=\frac{1}{t}$ and $f(t)=t^{-\left(\kappa^{-2}+p\right)}$ with $p>1$,
  then
  $${\rm dist}(u(t),{\rm Fix}(\mathcal{T}))=O\left(t^{-\frac{1}{2\kappa^{2}}}\right).$$
\end{itemize}
\end{remark}

\begin{remark}
Recently, Liang et al. \cite{Liang-2016} presented a convergence rate analysis for the inexact Krasnosel'ski\u{i}-Mann iteration algorithm (\ref{3discret-ab1}) with $\mathcal{T}$ being a nonexpansive operator under some restrictive conditions. Under the metric subregularity condition, they demonstrated that the inexact Krasnosel'ski\u{i}-Mann iteration algorithm enjoys a local line convergence rate (see \cite[Theorem 3]{Liang-2016}). As a comparison,  Theorem \ref{3th4ad} gives  a flexible  global exponential-type convergence rate for the nonautonomous evolution equation governed by a quasi-nonexpansive operator  under mild conditions.
\end{remark}

The following result is an immediate corollary of Theorem \ref{3th4ad}, where we consider $f\equiv0$.
\begin{corollary}\label{3co4ad}
Suppose that  $\mathcal{T}: \mathbb{H}\rightarrow\mathbb{H}$ is a quasi-nonexpansive  operator with $\text{Fix}(\mathcal{T})\neq\emptyset$  and that $\mathcal{T}$ satisfies the demiclosedness principle.
Let $u(t)$  be a strong global solution of the equation (\ref{f-dy-system}) with $f(t)\equiv 0$ and $\hat{u}\in{\rm Fix}(\mathcal{T})$ such that $w-\lim\limits_{t\rightarrow+\infty}u(t)=\hat{u}\in {\rm Fix}(\mathcal{T})$.
If $I-\mathcal{T}$ is metrically subregular at $\hat{u}$ for $0$ with a ball $B(\hat{u},r)$ and modulus $\kappa$, and $r>\lim\limits_{t\rightarrow+\infty}\|u(t)-\hat{u}\|$,
then there exists $t'\geq t_{0}$ such that for all $t\geq t'$
$${\rm dist}(u(t),{\rm Fix}(\mathcal{T}))
  \leq
  \|u(t')-\hat{u}\|e^{-\frac{1}{2\kappa^{2}}\int^{t}_{t'}\theta(s)ds}.\\
  $$
\end{corollary}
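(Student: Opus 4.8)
The plan is to read the statement as the $f\equiv0$ specialization of Theorem~\ref{3th4ad}, so that almost all of the work is already done. First I would check that every hypothesis of Theorem~\ref{3th4ad} is in force: $\mathcal{T}$ is quasi-nonexpansive with nonempty fixed-point set and satisfies the demiclosedness principle, $u(t)$ is a strong global solution, $w\text{-}\lim_{t\to+\infty}u(t)=\hat u\in{\rm Fix}(\mathcal{T})$, and $I-\mathcal{T}$ is metrically subregular at $\hat u$ for $0$ with ball $B(\hat u,r)$ and modulus $\kappa$ satisfying $r>\lim_{t\to+\infty}\|u(t)-\hat u\|$. These are precisely the assumptions of the corollary, now with the additional feature $f\equiv0$.

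Then I would invoke the bound (\ref{3th4.2ad-s1}). Since $f\equiv0$, we have $\|f(s)\|=0$ for almost every $s$, so the perturbation integral $M_{1}\int_{t'}^{t}\|f(s)\|e^{\frac{1}{\kappa^{2}}\int_{t'}^{s}\theta(x)dx}ds$ vanishes, leaving
\begin{equation*}
{\rm dist}^{2}(u(t),{\rm Fix}(\mathcal{T}))\leq e^{-\frac{1}{\kappa^{2}}\int_{t'}^{t}\theta(s)ds}\,\|u(t')-\hat u\|^{2},\qquad\forall\,t\geq t'.
\end{equation*}
Taking square roots on both sides and using $\sqrt{e^{-a}}=e^{-a/2}$ then gives exactly the claimed estimate.

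Alternatively, and more self-contained, I would reproduce the short Lyapunov argument from the proof of Theorem~\ref{3th4ad} with $f\equiv0$. Setting $V_{P}(t)={\rm dist}^{2}(u(t),{\rm Fix}(\mathcal{T}))$ and using metric subregularity in the form (\ref{th4.1-1}), the same computation yields $\frac{dV_{P}}{dt}\leq-\frac{\theta(t)}{\kappa^{2}}V_{P}(t)$ for $t\geq t'$; multiplying by the integrating factor $e^{\frac{1}{\kappa^{2}}\int_{t'}^{t}\theta(s)ds}$ and integrating from $t'$ to $t$ gives $V_{P}(t)\leq V_{P}(t')\,e^{-\frac{1}{\kappa^{2}}\int_{t'}^{t}\theta(s)ds}$. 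Finally I would use $V_{P}(t')={\rm dist}^{2}(u(t'),{\rm Fix}(\mathcal{T}))\leq\|u(t')-\hat u\|^{2}$, valid because $\hat u\in{\rm Fix}(\mathcal{T})$, and take square roots.

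There is no genuine obstacle here: the delicate parts — deriving the Lyapunov decay, producing the threshold time $t'$ from the condition $r>\lim_{t\to+\infty}\|u(t)-\hat u\|$ so that $u(t)\in B(\hat u,r)$, and applying the metric subregularity inequality — have all been carried out in Theorem~\ref{3th4ad}. The only two points requiring care are that the error term drops out exactly when $f\equiv0$, and that the passage from the squared distance to the distance produces the factor $\tfrac12$ in the exponent.
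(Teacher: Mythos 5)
Your proposal is correct and follows exactly the paper's own route: the paper also obtains this corollary by specializing the bound (\ref{3th4.2ad-s1}) of Theorem \ref{3th4ad} to $f\equiv 0$, so the perturbation integral vanishes and a square root yields the stated estimate. Your alternative self-contained Lyapunov argument is just the proof of Theorem \ref{3th4ad} re-run with $f\equiv 0$, so it adds nothing essentially different.
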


\section{Results for adaptive Douglas-Rachford dynamical systems}
\label{sec.4}

In this section we shall investigate asymptotic behavior of the trajectories of the adaptive Douglas-Rachford dynamical system  (\ref{dynamic-system}). The results obtained can be viewed  continuous time analogs with errors to the corresponding results of Dao and Phan \cite{Dao-Phan2018}.

\subsection{Existence and uniqueness of the trajectory }
Recall
\begin{equation}\label{iterative}
\begin{aligned}
J_{\gamma A}=(\operatorname{I}+\gamma A)^{-1}, \quad& R_{\gamma A}^{\lambda}=(1-\lambda) \mathrm{I}+\lambda J_{\gamma A},
\\ J_{\delta B}=(\mathrm{I}+\delta B)^{-1}, \quad& R_{\delta B}^{\mu}=(1-\mu) \mathrm{I}+\mu J_{\delta B}.
\end{aligned}
\end{equation}
Let us set
\begin{equation*}
\mathcal{T}^{\epsilon}:=(1-\epsilon)I+\epsilon R_{\delta B}^{\mu}R_{\gamma A}^{\lambda},\quad\epsilon\in(0,1).
\end{equation*}
Then ${\rm Fix} (\mathcal{T}^{\epsilon})={\rm Fix} (R_{\delta B}^{\mu}R_{\gamma A}^{\lambda})$.
The next proposition indicates that $\mathcal{T}^{\epsilon}$ and $R_{\delta B}^{\mu}R_{\gamma A}^{\lambda}$ have certain nice property  when the parameters are properly tuned.
Let us mention that
the first assertion of the proposition follows from \cite[Proposition 3.4]{Dao-Phan2018}, (ii) and (iii) from \cite[Lemma 4.1]{Dao-Phan2018},
and the last one from
\cite[Proposition 4.3]{Dao-Phan2018}.
\begin{proposition}\label{unique1}
Let $A : \mathbb{H}\rightrightarrows\mathbb{H}$ and $B : \mathbb{H}\rightrightarrows\mathbb{H}$ be
respectively maximally $\alpha$- and $\beta$-monotone.
Suppose that the parameters $\gamma,\delta,\lambda,\mu$ in (\ref{iterative})
satisfy
\begin{equation}\label{cond3.1}
\left\{
\begin{aligned}
&\min \{1+\gamma \alpha, 1+\delta \beta\}>0,\\
&(\lambda-1)(\mu-1)=1,\quad\text{and}\quad\delta=(\lambda-1) \gamma.
\end{aligned}
\right.
\end{equation}
Then the following statements are true:
\begin{itemize}
  \item [(i)] $J_{\gamma A}$, $J_{\delta B}$, and $R_{\delta B}^{\mu}R_{\gamma A}^{\lambda}$ are single-valued and have
full domain.
\item [(ii)]$I-\mathcal{T}^{\epsilon}=\epsilon(\mathrm{I}-R_{\delta B}^{\mu}R_{\gamma A}^{\lambda})=\epsilon\mu\left(J_{\gamma A}-J_{\delta B} R_{\gamma A}^{\lambda}\right)$.
\item [(iii)] $J_{\gamma A}({\rm Fix} (R_{\delta B}^{\mu}R_{\gamma A}^{\lambda}))=J_{\delta B} R_{\gamma A}^{\lambda}({\rm Fix}  (R_{\delta B}^{\mu}R_{\gamma A}^{\lambda}))=\text{zer}(A+B)$.
\item [(iv)] If $\lambda,\mu\geq 1$, then
for all $x,y\in \mathbb{H}$,
\begin{eqnarray}\label{le3.1-s0}
\|\mathcal{T}^{\epsilon} x-\mathcal{T}^{\epsilon} y\|^{2}&\leq &\|x-y\|^{2}-\frac{1-\epsilon}{\epsilon}\|(I-\mathcal{T}^{\epsilon})x-(I-\mathcal{T}^{\epsilon})y\|^{2} \nonumber\\
&&-\epsilon\mu(2+2 \gamma \alpha-\mu)\left\|J_{\gamma A} x-J_{\gamma A} y\right\|^{2} \nonumber\\
&&-\epsilon\mu(\mu-(2-2 \gamma \beta))\left\|J_{\delta B} R_{\gamma A}^{\lambda} x-J_{\delta B} R_{\gamma A}^{\lambda} y\right\|^{2},
\end{eqnarray}
and
\begin{eqnarray}\label{le3.1-s1}
&&\|R_{\delta B}^{\mu}R_{\gamma A}^{\lambda} x-R_{\delta B}^{\mu}R_{\gamma A}^{\lambda} y\|^{2} \nonumber\\
&\leq &\|x-y\|^{2}-\mu(2+2 \gamma \alpha-\mu)\left\|J_{\gamma A} x-J_{\gamma A} y\right\|^{2} \nonumber\\
&&-\mu(\mu-(2-2 \gamma \beta))\left\|J_{\delta B} R_{\gamma A}^{\lambda} x-J_{\delta B} R_{\gamma A}^{\lambda} y\right\|^{2}.
\end{eqnarray}
\end{itemize}
\end{proposition}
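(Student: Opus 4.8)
The plan is to derive all four assertions from the resolvent identities together with the parameter constraints (\ref{cond3.1}), the only computational part being (iv). For (i), since $1+\gamma\alpha>0$ and $1+\delta\beta>0$, Lemma \ref{prox-lip1}(i) gives that $J_{\gamma A}$ and $J_{\delta B}$ are single-valued; for full domain I would write $I+\gamma A=(1+\gamma\alpha)\bigl(I+\tfrac{\gamma}{1+\gamma\alpha}(A-\alpha I)\bigr)$ with $A-\alpha I$ maximally monotone and $\tfrac{\gamma}{1+\gamma\alpha}>0$, so that Minty's theorem yields $\operatorname{ran}(I+\gamma A)=\mathbb{H}$, i.e. $\operatorname{dom}J_{\gamma A}=\mathbb{H}$, and analogously for $J_{\delta B}$; then $R_{\gamma A}^{\lambda}$, $R_{\delta B}^{\mu}$ and their composition inherit single-valuedness and full domain. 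For (ii), the first equality is immediate from the definition of $\mathcal{T}^{\epsilon}$, and for the second I would substitute $R_{\gamma A}^{\lambda}=(1-\lambda)I+\lambda J_{\gamma A}$ and $R_{\delta B}^{\mu}=(1-\mu)I+\mu J_{\delta B}$ into $I-R_{\delta B}^{\mu}R_{\gamma A}^{\lambda}$ and collect terms: the constant operator carries the factor $1-(1-\mu)(1-\lambda)$ and the $J_{\gamma A}$ term the factor $-(1-\mu)\lambda$, both of which reduce to the claimed form precisely because $(\lambda-1)(\mu-1)=1$ forces $(1-\mu)(1-\lambda)=1$ and $(1-\mu)\lambda=-\mu$.

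Assertion (iii) I would deduce from (ii). Since $\mu\neq0$, (ii) shows that $x\in\operatorname{Fix}(R_{\delta B}^{\mu}R_{\gamma A}^{\lambda})$ if and only if $J_{\gamma A}x=J_{\delta B}R_{\gamma A}^{\lambda}x$, which already gives the first equality of images. Denoting this common value by $p$, the relation $p=J_{\gamma A}x$ reads $\tfrac{x-p}{\gamma}\in Ap$, while $R_{\gamma A}^{\lambda}x-p=-(\lambda-1)(x-p)$ combined with $\delta=(\lambda-1)\gamma$ turns $p=J_{\delta B}R_{\gamma A}^{\lambda}x$ into $-\tfrac{x-p}{\gamma}\in Bp$; adding the two inclusions gives $0\in(A+B)p$. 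Conversely, for $p\in\operatorname{zer}(A+B)$ I would pick $v\in Ap$ with $-v\in Bp$ and set $x:=p+\gamma v$; then $J_{\gamma A}x=p$ and, by the same two identities, $R_{\gamma A}^{\lambda}x-p=-\delta v\in\delta Bp$, so $J_{\delta B}R_{\gamma A}^{\lambda}x=p$ and $x$ is a fixed point mapping to $p$. This proves $J_{\gamma A}(\operatorname{Fix})=\operatorname{zer}(A+B)$, and the remaining set coincides by the first equality.

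The heart of the argument is (iv). To obtain (\ref{le3.1-s1}) I would apply Lemma \ref{prox-lip1}(ii) twice. Because $\lambda\geq1$ and $1+\gamma\alpha>0$, it bounds $\|R_{\gamma A}^{\lambda}x-R_{\gamma A}^{\lambda}y\|^{2}$ by $(1-\lambda)^{2}\|x-y\|^{2}$ plus a multiple of $\|J_{\gamma A}x-J_{\gamma A}y\|^{2}$; because $\mu\geq1$ and $1+\delta\beta>0$, applying it to $R_{\delta B}^{\mu}$ at the points $R_{\gamma A}^{\lambda}x,R_{\gamma A}^{\lambda}y$ bounds $\|R_{\delta B}^{\mu}R_{\gamma A}^{\lambda}x-R_{\delta B}^{\mu}R_{\gamma A}^{\lambda}y\|^{2}$ by $(1-\mu)^{2}\|R_{\gamma A}^{\lambda}x-R_{\gamma A}^{\lambda}y\|^{2}$ plus a multiple of $\|J_{\delta B}R_{\gamma A}^{\lambda}x-J_{\delta B}R_{\gamma A}^{\lambda}y\|^{2}$. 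Substituting the first bound into the second (the nonnegative factor $(1-\mu)^{2}$ preserves the inequality) and simplifying the three coefficients with (\ref{cond3.1}) gives (\ref{le3.1-s1}): the $\|x-y\|^{2}$ coefficient collapses to $1$ since $(1-\mu)^{2}(1-\lambda)^{2}=[(1-\mu)(1-\lambda)]^{2}=1$; the $\|J_{\gamma A}x-J_{\gamma A}y\|^{2}$ coefficient reduces to $-\mu(2+2\gamma\alpha-\mu)$ using $\lambda=\mu/(\mu-1)$; and the $\|J_{\delta B}R_{\gamma A}^{\lambda}x-J_{\delta B}R_{\gamma A}^{\lambda}y\|^{2}$ coefficient reduces to $-\mu(\mu-2+2\gamma\beta)$ using $(\mu-1)\delta=\gamma$.

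Finally, to pass from (\ref{le3.1-s1}) to (\ref{le3.1-s0}) I would write $\mathcal{T}^{\epsilon}x-\mathcal{T}^{\epsilon}y=(1-\epsilon)(x-y)+\epsilon(R_{\delta B}^{\mu}R_{\gamma A}^{\lambda}x-R_{\delta B}^{\mu}R_{\gamma A}^{\lambda}y)$, expand the squared norm by the identity (\ref{a-identity}) with $(\epsilon,\varrho)=(1-\epsilon,\epsilon)$, and use (ii) to rewrite $(x-y)-(R_{\delta B}^{\mu}R_{\gamma A}^{\lambda}x-R_{\delta B}^{\mu}R_{\gamma A}^{\lambda}y)=\tfrac1\epsilon[(I-\mathcal{T}^{\epsilon})x-(I-\mathcal{T}^{\epsilon})y]$; inserting (\ref{le3.1-s1}) then yields (\ref{le3.1-s0}) with the stated $\tfrac{1-\epsilon}{\epsilon}$ and $\epsilon\mu(\cdots)$ coefficients. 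The main obstacle is exactly the coefficient bookkeeping in (iv): Lemma \ref{prox-lip1}(ii) produces constants in terms of $\lambda,\mu,\gamma\alpha,\delta\beta$, and matching them to the stated constants in $\mu,\gamma\alpha,\gamma\beta$ rests entirely on the two relations in (\ref{cond3.1}) --- in particular on $(\mu-1)\delta=\gamma$, which is what silently converts the $\delta\beta$ generated by the lemma into $\gamma\beta$. Verifying $\lambda,\mu\geq1$ to license the lemma, and tracking the sign of $(1-\mu)^{2}$ so that the two estimates compose correctly, are the remaining points that need care.
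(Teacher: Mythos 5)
Your proposal is correct, but it takes a different route from the paper: the paper does not prove Proposition \ref{unique1} at all, it simply cites the corresponding results of Dao and Phan (\cite[Proposition 3.4]{Dao-Phan2018} for (i), \cite[Lemma 4.1]{Dao-Phan2018} for (ii)--(iii), and \cite[Proposition 4.3]{Dao-Phan2018} for (iv)), whereas you reconstruct a self-contained derivation from the one imported estimate, Lemma \ref{prox-lip1}. Your key computations all check out: in (ii), $(1-\mu)(1-\lambda)=(\lambda-1)(\mu-1)=1$ kills the identity term and $(1-\mu)\lambda=-\mu$ (from $\lambda=\mu/(\mu-1)$) produces $\mu(J_{\gamma A}-J_{\delta B}R_{\gamma A}^{\lambda})$; in (iii), the two inclusions $\tfrac{x-p}{\gamma}\in Ap$ and $-\tfrac{x-p}{\gamma}\in Bp$ (the latter via $R_{\gamma A}^{\lambda}x-p=-(\lambda-1)(x-p)$ and $\delta=(\lambda-1)\gamma$) give both directions of the correspondence with $\operatorname{zer}(A+B)$; in (iv), composing Lemma \ref{prox-lip1}(ii) for $R_{\gamma A}^{\lambda}$ and $R_{\delta B}^{\mu}$ and using $(1-\mu)^{2}(1-\lambda)^{2}=1$, $(1-\mu)^{2}\lambda[(1-\lambda)(2+2\gamma\alpha)+\lambda]=\mu(\mu-1)\cdot\tfrac{\mu-2-2\gamma\alpha}{\mu-1}=-\mu(2+2\gamma\alpha-\mu)$, and $(\mu-1)\delta=\gamma$ to convert $\delta\beta$ into $\gamma\beta$, yields (\ref{le3.1-s1}); and the passage to (\ref{le3.1-s0}) via the identity (\ref{a-identity}) together with (ii), which turns the cross term into $-\tfrac{1-\epsilon}{\epsilon}\|(I-\mathcal{T}^{\epsilon})x-(I-\mathcal{T}^{\epsilon})y\|^{2}$, is exactly right. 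Two points you treat implicitly but which deserve a sentence: applying Lemma \ref{prox-lip1}(ii) to $R_{\delta B}^{\mu}$ needs $1+\delta\beta>0$, which is the second half of (\ref{cond3.1}); and $\mu\neq 0$ (indeed $\mu>1$) follows because $\gamma,\delta>0$ and $\delta=(\lambda-1)\gamma$ force $\lambda>1$, hence $\mu-1=1/(\lambda-1)>0$. What each approach buys: the paper's citation is economical and defers all bookkeeping to the source, while your argument makes the proposition verifiable within the paper itself at the cost of the coefficient algebra you carry out --- effectively re-proving the relevant parts of \cite{Dao-Phan2018} from the single lemma the paper chose to import.
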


According to Lemma \ref{prox-lip1} and Proposition \ref{unique1}, the operator
$R_{\delta B}^{\mu}R_{\gamma A}^{\lambda}$ is Lipschitz continous if the parameters $\gamma,\delta,\lambda,\mu$
are subject to (\ref{cond3.1}), and
such a property makes for a guarantee to bring about existence and uniqueness of a
strong global solution of the system (\ref{dynamic-system}).
Indeed,
the system (\ref{dynamic-system}) can be rewritten as
\begin{equation*}
 \left\{
             \begin{array}{lr}
             \frac{du}{dt}=F(t,u) &  \\
             u(t_{0})=u_{0}\in\mathbb{H},&
             \end{array}
   \right.
\end{equation*}
where $F:[t_{0},+\infty)\times\mathbb{H}\rightarrow\mathbb{H}$ is defined by $F(t,u)=\theta(t)(R_{\delta B}^{\mu}R_{\gamma A}^{\lambda}u-u)+f(t)$.
Applying the Lipschitz continuity of $R_{\delta B}^{\mu}R_{\gamma A}^{\lambda}$, the local integrability of $\theta(\cdot)$ and $f\in\mathbb{L}^{1}([t_{0},+\infty))$,
we can easily verify that the conditions of the Cauchy-Lipschitz theorem (see e.g.
\cite[Corollary 2.6]{Teschl-stability}) are satisfied.
In this way, we get a strong global solution of the system (\ref{dynamic-system}). In addition, the solution is a
classical solution of class $\mathcal{C}^{1}$ if the functions $\theta(t)$ and $f(t)$ are continuous.

In view of the discussion above, an immediate conclusion follows:


\begin{theorem}\label{th3.1}
Let $A : \mathbb{H}\rightrightarrows\mathbb{H}$ and $B : \mathbb{H}\rightrightarrows\mathbb{H}$ be
respectively maximally $\alpha$- and $\beta$-monotone.
Suppose that (\ref{cond3.1}) holds.
Then for each initial point $u_{0}\in\mathbb{H}$,
there exists a unique strong global solution (trajectory) $u(t)$
of the system {\rm(\ref{dynamic-system})} in the global time interval
$[t_{0},+\infty)$.
\end{theorem}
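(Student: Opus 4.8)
The plan is to recast the system (\ref{dynamic-system}) as the nonautonomous Cauchy problem $\dot u = F(t,u)$, $u(t_0)=u_0$, with right-hand side $F(t,u)=\theta(t)\bigl(R_{\delta B}^{\mu}R_{\gamma A}^{\lambda}u-u\bigr)+f(t)$, and then to verify that $F$ meets the hypotheses of the Carath\'eodory-type Cauchy--Lipschitz theorem \cite[Corollary 2.6]{Teschl-stability}, which delivers a unique absolutely continuous solution on the whole interval $[t_0,+\infty)$. Writing $\mathcal{G}:=R_{\delta B}^{\mu}R_{\gamma A}^{\lambda}$, the three things to check are: (a) $\mathcal{G}$ is single-valued, everywhere defined, and Lipschitz; (b) for each fixed $u$ the map $t\mapsto F(t,u)$ is measurable and locally integrable; and (c) $F$ is Lipschitz in $u$ with a time-coefficient that is locally integrable. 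Granting these, global rather than merely local existence should come out automatically, since the spatial Lipschitz modulus is uniform in $u$.

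The core of the argument is step (a). Condition (\ref{cond3.1}) gives $\min\{1+\gamma\alpha,1+\delta\beta\}>0$, so Lemma \ref{prox-lip1}(i) shows that $J_{\gamma A}$ and $J_{\delta B}$ are single-valued and Lipschitz, with constants $\tfrac{1}{1+\gamma\alpha}$ and $\tfrac{1}{1+\delta\beta}$ respectively, while Proposition \ref{unique1}(i) guarantees that $J_{\gamma A}$, $J_{\delta B}$ and $\mathcal{G}$ all have full domain. Since $R_{\gamma A}^{\lambda}=(1-\lambda)I+\lambda J_{\gamma A}$ and $R_{\delta B}^{\mu}=(1-\mu)I+\mu J_{\delta B}$ are affine combinations of Lipschitz maps, the triangle inequality makes each of them Lipschitz, and hence their composition $\mathcal{G}$ is Lipschitz with some constant $L>0$ (an explicit bound being $\bigl(|1-\mu|+\tfrac{\mu}{1+\delta\beta}\bigr)\bigl(|1-\lambda|+\tfrac{\lambda}{1+\gamma\alpha}\bigr)$).

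With $L$ in hand, steps (b) and (c) are routine. For fixed $u$, the map $t\mapsto F(t,u)$ is measurable and locally integrable because $\theta$ is locally integrable, $\mathcal{G}u-u$ is a fixed vector, and $f\in\mathbb{L}^{1}([t_{0},+\infty))$. For the Lipschitz estimate, for almost every $t$ and all $u,v\in\mathbb{H}$,
\begin{equation*}
\|F(t,u)-F(t,v)\|=\theta(t)\,\|(\mathcal{G}-I)u-(\mathcal{G}-I)v\|\le (L+1)\,\theta(t)\,\|u-v\|,
\end{equation*}
and the coefficient $t\mapsto(L+1)\theta(t)$ is locally integrable. These are exactly the Carath\'eodory conditions, so \cite[Corollary 2.6]{Teschl-stability} yields a unique strong global solution. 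I expect the only genuine obstacle to be confirming global existence: because $\theta$ is only locally integrable rather than continuous, one must work in the Carath\'eodory framework rather than with the classical Picard--Lindel\"of theorem, and the point that rules out finite-time blow-up is precisely that the Lipschitz modulus of $F$ in $u$ is uniform in the spatial variable; alternatively, the a priori bound (\ref{ab-th3.1-s6}) from Theorem \ref{3th3ad} could be invoked to confine the trajectory and exclude explosion.
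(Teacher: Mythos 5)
Your proposal is correct and follows essentially the same route as the paper: both recast (\ref{dynamic-system}) as the Cauchy problem $\frac{du}{dt}=F(t,u)$ with $F(t,u)=\theta(t)\bigl(R_{\delta B}^{\mu}R_{\gamma A}^{\lambda}u-u\bigr)+f(t)$, deduce the Lipschitz continuity of $R_{\delta B}^{\mu}R_{\gamma A}^{\lambda}$ from Lemma \ref{prox-lip1} and Proposition \ref{unique1} under (\ref{cond3.1}), and invoke the Cauchy--Lipschitz theorem \cite[Corollary 2.6]{Teschl-stability} together with the local integrability of $\theta$ and $f\in\mathbb{L}^{1}([t_{0},+\infty))$. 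Your write-up merely fills in details the paper leaves implicit (the explicit Lipschitz constant and the Carath\'eodory-type verification), so there is no substantive difference.
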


\subsection{Convergence of the trajectories}

We note that $R_{\delta B}^{\mu}R_{\gamma A}^{\lambda}$ is nonexpansive on $\mathbb{H}$,
provided that all parameters occurring in (\ref{le3.1-s1}) cater for
\begin{equation}\label{para-sec-3}
\mu(2+2 \gamma \alpha-\mu)\geq0\quad\text{and}\quad\mu(\mu-(2-2 \gamma \beta))\geq0.
\end{equation}

Now we are going to discuss that how the parameters play a role in the convergence analysis of the system {\rm(\ref{dynamic-system})}.
Consider the following two parametric options:
\begin{itemize}
  \item [(\textbf{C1})]:
  $\mu=\lambda=2$, $\gamma=\delta\in\mathbb{R}_{++}$, $\epsilon\in(0,1)$ and
  \begin{equation}\label{assump2}
  \alpha+\beta>0\quad\text{and}\quad\frac{\gamma\alpha\beta}{\alpha+\beta}>\epsilon-1.
\end{equation}

  \item [(\textbf{C2})]:$\alpha+\beta\in\mathbb{R}_{+}$ and $(\gamma,\delta,\lambda,\mu)\in \mathbb{R}^{2}_{++}\times[1,+\infty)^{2}$
  satisfy
  \begin{equation}\label{3th3.2-s1}
\left\{
\begin{array}{l}{1+2 \gamma \alpha>0}, \\
{\mu \in[2-2 \gamma \beta, 2+2 \gamma \alpha]},\\
{(\lambda-1)(\mu-1)=1, \text{ and } \delta=(\lambda-1) \gamma}.
\end{array}
\right.
\end{equation}

\end{itemize}
It is clear that (\ref{3th3.2-s1}) implies (\ref{para-sec-3}), and so $R_{\delta B}^{\mu}R_{\gamma A}^{\lambda}$ is nonexpansive
in case (\textbf{C2}).
However, case (\textbf{C1}) alone is not sufficient for such a property to be guaranteed.
Indeed, in this case,
the expression (\ref{le3.1-s1}) reduces to
\begin{equation*}
\|R_{\delta B}^{\mu}R_{\gamma A}^{\lambda} x-R_{\delta B}^{\mu}R_{\gamma A}^{\lambda} y\|^{2} \leq \|x-y\|^{2}-4\gamma\Phi(\alpha,\beta,x,y),
\quad\forall x,y\in\mathbb{H},
\end{equation*}
where $$\Phi(\alpha,\beta,x,y)=\alpha\left\|J_{\gamma A} x-J_{\gamma A} y\right\|^{2}+
\beta\left\|J_{\delta B} R_{\gamma A}^{\lambda} x-J_{\delta B} R_{\gamma A}^{\lambda} y\right\|^{2}.$$
Note that $\Phi(\alpha,\beta,x,y)$ is not necessarily nonnegative even if $\alpha+\beta>0$ (a similar discussion can be found in \cite{c29Guo}).
Thus, some existing results depending on the nonexpansiveness of an operator are not applicable in case (\textbf{C1}).
Fortunately, we notice that
$$\mathcal{T}^{\epsilon}\,\,\mbox{is quasi-nonexpansive}$$
in case (\textbf{C1}) by $(\ref{le3.1-s0})$ with a parallel derivation of (\ref{3th3ad-s1-1}), and that the system {\rm(\ref{dynamic-system})} is actually equivalent to the following system
\begin{equation}\label{tdynamic-system}
\left\{
             \begin{array}{lr}
            \frac{du}{dt}+\frac{\theta(t)}{\epsilon}\left[u(t)-\mathcal{T}^{\epsilon}u(t)\right]=f(t), &  \\
             u(t_{0})=u_{0}\in\mathbb{H}.&
             \end{array}
   \right.
   \end{equation}
   according to (ii) of Proposition \ref{unique1}.
This allows us to fall back on the results of Section \ref{sec.3}.

\begin{proposition}\label{3pro-c12}
Suppose that the parameters $\gamma,\delta,\lambda,\mu$ in (\ref{iterative})
satisfy either  (\textbf{C1}) or (\textbf{C2}).
Then (\ref{cond3.1}) holds.
\end{proposition}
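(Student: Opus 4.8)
The plan is to check each of the three requirements comprising (\ref{cond3.1})---namely $1+\gamma\alpha>0$, $1+\delta\beta>0$, and the pair of identities $(\lambda-1)(\mu-1)=1$ and $\delta=(\lambda-1)\gamma$---separately under each of the two parameter regimes (\textbf{C1}) and (\textbf{C2}). The two algebraic identities are the easy part. Under (\textbf{C1}), where $\lambda=\mu=2$ and $\gamma=\delta$, one has $(\lambda-1)(\mu-1)=1\cdot1=1$ and $(\lambda-1)\gamma=\gamma=\delta$ immediately. Under (\textbf{C2}), both identities are imposed verbatim as part of (\ref{3th3.2-s1}). Thus the whole matter reduces to establishing the two strict positivity conditions on $1+\gamma\alpha$ and $1+\delta\beta$ in each case.

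For (\textbf{C1}), since $\gamma=\delta$, I must deduce $1+\gamma\alpha>0$ and $1+\gamma\beta>0$ from $\alpha+\beta>0$ and $\frac{\gamma\alpha\beta}{\alpha+\beta}>\epsilon-1$ with $\epsilon\in(0,1)$. The key observation is that $\epsilon-1>-1$, so $\frac{\gamma\alpha\beta}{\alpha+\beta}>-1$; multiplying through by $\alpha+\beta>0$ gives $\gamma\alpha\beta+\alpha+\beta>0$. I then examine the two numbers $1+\gamma\alpha$ and $1+\gamma\beta$. Their sum equals $2+\gamma(\alpha+\beta)>0$ (as $\gamma>0$ and $\alpha+\beta>0$), while their product equals $1+\gamma(\alpha+\beta)+\gamma^{2}\alpha\beta$; multiplying $\gamma\alpha\beta+\alpha+\beta>0$ by $\gamma>0$ shows $\gamma^{2}\alpha\beta+\gamma(\alpha+\beta)>0$, whence the product exceeds $1$. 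Two real numbers with positive sum and positive product are both positive, which delivers $1+\gamma\alpha>0$ and $1+\delta\beta=1+\gamma\beta>0$.

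For (\textbf{C2}), the hypothesis $1+2\gamma\alpha>0$ gives $\gamma\alpha>-\frac{1}{2}$, hence $1+\gamma\alpha>\frac{1}{2}>0$ at once. For the $\beta$-condition I first note that $\mu\geq1$ together with $(\lambda-1)(\mu-1)=1$ forces $\mu>1$ (and $\lambda>1$), so $\lambda-1=\frac{1}{\mu-1}$ and therefore $\delta=(\lambda-1)\gamma=\frac{\gamma}{\mu-1}$. The lower bound $\mu\geq2-2\gamma\beta$ rearranges to $\gamma\beta\geq1-\frac{\mu}{2}$; dividing by $\mu-1>0$ yields $\delta\beta=\frac{\gamma\beta}{\mu-1}\geq\frac{2-\mu}{2(\mu-1)}$, so that $1+\delta\beta\geq\frac{\mu}{2(\mu-1)}>0$.

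The only genuinely delicate point is the $\beta$-positivity. In (\textbf{C1}) it hides behind the innocuous-looking hypothesis $\frac{\gamma\alpha\beta}{\alpha+\beta}>\epsilon-1$ and is extracted via the sum/product criterion for a pair of reals; in (\textbf{C2}) it hinges on the substitution $\delta=\gamma/(\mu-1)$ and on using the lower endpoint of the admissible interval for $\mu$. Everything else is a direct substitution, so I expect the write-up to be short once these two positivity computations are in place.
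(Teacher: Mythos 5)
Your proposal is correct, and it is worth noting that it differs in kind from what the paper does: the paper's entire ``proof'' of this proposition is a one-line citation, ``See the proof of Theorem 4.5 in Dao and Phan,'' so no argument is actually displayed in the paper at all. Your write-up supplies a complete, self-contained verification. The algebraic identities $(\lambda-1)(\mu-1)=1$ and $\delta=(\lambda-1)\gamma$ are indeed immediate in case (\textbf{C1}) and are hypotheses in case (\textbf{C2}), so everything reduces to the two strict positivity conditions, exactly as you say. Your (\textbf{C1}) argument is the nicest part: from $\epsilon-1>-1$ you get $\gamma\alpha\beta+\alpha+\beta>0$, and then the observation that $1+\gamma\alpha$ and $1+\gamma\beta$ have positive sum $\bigl(2+\gamma(\alpha+\beta)\bigr)$ and product exceeding $1$ $\bigl(1+\gamma[(\alpha+\beta)+\gamma\alpha\beta]\bigr)$ forces both to be positive; this cleanly avoids any case analysis on the signs of $\alpha$ and $\beta$, which is how one would otherwise naturally proceed (note $\alpha+\beta>0$ permits one of them to be negative). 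Your (\textbf{C2}) argument is also sound: $1+2\gamma\alpha>0$ gives $1+\gamma\alpha>\tfrac12$; the constraint $(\lambda-1)(\mu-1)=1$ with $\mu\geq 1$ rules out $\mu=1$, so $\delta=\gamma/(\mu-1)$ is legitimate, and the lower endpoint $\mu\geq 2-2\gamma\beta$ yields $1+\delta\beta\geq\frac{\mu}{2(\mu-1)}>0$. What the paper's approach buys is brevity; what yours buys is that the reader need not chase the external reference, and the sum/product criterion makes the (\textbf{C1}) case arguably shorter than a sign-splitting argument would be.
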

\begin{proof}
See the proof of \cite[Theorem 4.5]{Dao-Phan2018}.
\end{proof}

We are now in position to establish
the weak convergence
of the system (\ref{dynamic-system})
in cases (\textbf{C1}) and (\textbf{C2}).
Note that the sum $A+B$ is strongly monotone in case (\textbf{C1}) due to $\alpha+\beta>0$, which gives rise to that
the problem (\ref{SMP}) has a unique solution.
We then learn from Lemma \ref{prox-lip1} (i), Proposition \ref{unique1} (iii) and Proposition \ref{3pro-c12} that
${\rm Fix}(R_{\delta B}^{\mu}R_{\gamma A}^{\lambda})\neq\emptyset$ in case (\textbf{C1}).

\begin{theorem}\label{3th3.4}
Let $A : \mathbb{H}\rightrightarrows\mathbb{H}$ and $B : \mathbb{H}\rightrightarrows\mathbb{H}$ be
respectively maximally $\alpha$- and $\beta$-monotone.
Suppose that the parameters $\alpha,\beta,\gamma,\delta,\lambda,\mu$ satisfy (\textbf{C1}),
and that the assumption (\textbf{A1}) holds.
Let $u(t)$ be the trajectory of the system {\rm (\ref{dynamic-system})}, and let $u^{*}\in{\rm Fix}(R_{\delta B}^{\mu}R_{\gamma A}^{\lambda})$.
Then the following
statements are true:
\begin{itemize}
\item[(i)]
$\int^{+\infty}_{t_{0}}\|R_{\delta B}^{\mu}R_{\gamma A}^{\lambda}u(s)-u(s)\|^{2}ds<+\infty$.

\item[(ii)]$\lim\limits_{t\rightarrow+\infty}\|R_{\delta B}^{\mu}R_{\gamma A}^{\lambda}u(t)-u(t)\|=0$.

\item[(iii)] If $R_{\delta B}^{\mu}R_{\gamma A}^{\lambda}$ satisfies the demiclosedness principle, then
there exists $\hat{u}\in \text{Fix}(R_{\delta B}^{\mu}R_{\gamma A}^{\lambda})$ such that
$w-\lim\limits_{t\rightarrow+\infty}u(t)=\hat{u}$.

\item[(iv)]
$\int^{+\infty}_{t_{0}}\left\|\alpha\left(J_{\gamma A}u(s)-J_{\gamma A} u^{*}\right)
+\beta\left(J_{\delta B} R_{\gamma A}^{\lambda} u(s)-J_{\delta B} R_{\gamma A}^{\lambda}u^{*} \right)\right\|^{2}ds<+\infty$.
\item[(v)]
$\lim\limits_{t\rightarrow+\infty}\left\|\alpha\left(J_{\gamma A}u(t)-J_{\gamma A} u^{*}\right)
+\beta\left(J_{\delta B} R_{\gamma A}^{\lambda} u(t)-J_{\delta B} R_{\gamma A}^{\lambda}u^{*} \right)\right\|=0$.
\item[(vi)]
$\lim\limits_{t\rightarrow+\infty}J_{\gamma A}u(t)=J_{\gamma A} u^{*}=\lim\limits_{t\rightarrow+\infty}J_{\delta B} R_{\gamma A}^{\lambda} u(t)=J_{\delta B} R_{\gamma A}^{\lambda}u^{*}=\text{zer}(A+B)$.


\end{itemize}
\end{theorem}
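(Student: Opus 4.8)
The plan is to reduce everything to Theorem \ref{3th3ad3} applied to the equivalent system (\ref{tdynamic-system}). By Proposition \ref{unique1}(ii), in case (\textbf{C1}) (so that $\mu=\lambda=2$) one has $I-\mathcal{T}^{\epsilon}=2\epsilon(J_{\gamma A}-J_{\delta B}R_{\gamma A}^{\lambda})$, and (\ref{dynamic-system}) rewrites as (\ref{tdynamic-system}), which is an instance of the abstract equation (\ref{f-dy-system}) with governing operator $\mathcal{T}^{\epsilon}$ and weight $\theta(t)/\epsilon$. Since $\epsilon\in(0,1)$ is a fixed positive constant, assumption (\textbf{A1}) for $\theta$ transfers verbatim to $\theta/\epsilon$, and local integrability is preserved. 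First I would fix the splitting $\mathcal{T}_{1}:=J_{\gamma A}$ and $\mathcal{T}_{2}:=J_{\delta B}R_{\gamma A}^{\lambda}$, so that the constant in hypothesis $(H_{2})$ of Theorem \ref{3th3ad3} is $2\epsilon\neq 0$.

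Next I would verify $(H_{1})$--$(H_{4})$. Condition $(H_{1})$ follows from Proposition \ref{unique1}(i) together with Lemma \ref{prox-lip1}(i): since Proposition \ref{3pro-c12} guarantees (\ref{cond3.1}), in particular $\min\{1+\gamma\alpha,1+\delta\beta\}>0$, the resolvents are Lipschitz, whence $\mathcal{T}_{1}$, $\mathcal{T}_{2}$ and $\mathcal{T}^{\epsilon}$ are single-valued, of full domain and uniformly continuous. Condition $(H_{2})$ is exactly the identity above, and $(H_{3})$ holds because in (\textbf{C1}) the sum $A+B$ is strongly monotone (as $\alpha+\beta>0$), so $\text{zer}(A+B)$ is a singleton and Proposition \ref{unique1}(iii) yields $\text{Fix}(R_{\delta B}^{\mu}R_{\gamma A}^{\lambda})\neq\emptyset$.

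For $(H_{4})$ I would set $y=x^{*}\in\text{Fix}(\mathcal{T}^{\epsilon})$ in (\ref{le3.1-s0}) and specialize $\mu=2$, reading off the coefficients $\omega_{0}=\frac{1-\epsilon}{\epsilon}$, $\omega_{1}=4\epsilon\gamma\alpha$ and $\omega_{2}=4\epsilon\gamma\beta$ in (\ref{3th3ad-s1}). The decisive step, and the only genuinely delicate point, is the admissibility condition (\ref{3th3ad3-a4}): here $\omega_{1}+\omega_{2}=4\epsilon\gamma(\alpha+\beta)>0$, and a short computation with the $(H_{2})$-constant $2\epsilon$ gives
$$\omega_{0}+\frac{\omega_{1}\omega_{2}}{(2\epsilon)^{2}(\omega_{1}+\omega_{2})}=\frac{1-\epsilon}{\epsilon}+\frac{\gamma\alpha\beta}{\epsilon(\alpha+\beta)},$$
which is positive precisely when $\frac{\gamma\alpha\beta}{\alpha+\beta}>\epsilon-1$, i.e. exactly condition (\ref{assump2}) of (\textbf{C1}). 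Hence Theorem \ref{3th3ad3} applies.

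Finally I would translate the conclusions back. Since $\mathcal{T}^{\epsilon}u-u=\epsilon(R_{\delta B}^{\mu}R_{\gamma A}^{\lambda}u-u)$, parts (i) and (ii) of Theorem \ref{3th3ad3} yield (i) and (ii) here up to the factor $\epsilon^{2}$. For (iii), from $I-\mathcal{T}^{\epsilon}=\epsilon(I-R_{\delta B}^{\mu}R_{\gamma A}^{\lambda})$ and $\text{Fix}(\mathcal{T}^{\epsilon})=\text{Fix}(R_{\delta B}^{\mu}R_{\gamma A}^{\lambda})$ the demiclosedness principle for $R_{\delta B}^{\mu}R_{\gamma A}^{\lambda}$ is equivalent to that for $\mathcal{T}^{\epsilon}$, so Theorem \ref{3th3ad3}(iii) delivers the weak limit $\hat{u}$. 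Parts (iv) and (v) follow from Theorem \ref{3th3ad3}(iv) after factoring out the positive constant $4\epsilon\gamma$ from $\omega_{1}(\mathcal{T}_{1}u-\mathcal{T}_{1}u^{*})+\omega_{2}(\mathcal{T}_{2}u-\mathcal{T}_{2}u^{*})$. For (vi), Theorem \ref{3th3ad3}(iv) gives the common limit $J_{\gamma A}u^{*}=J_{\delta B}R_{\gamma A}^{\lambda}u^{*}$, which Proposition \ref{unique1}(iii) identifies as an element of $\text{zer}(A+B)$; strong monotonicity makes this set a singleton, pinning the value down. The main obstacle, as indicated, is the algebraic reduction of (\ref{3th3ad3-a4}) to (\ref{assump2}), where the choice $\mu=\lambda=2$ is essential.
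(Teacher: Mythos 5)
Your proposal is correct and follows essentially the same route as the paper: rewriting (\ref{dynamic-system}) as (\ref{tdynamic-system}) and invoking Theorem \ref{3th3ad3} with $\mathcal{T}_{1}=J_{\gamma A}$, $\mathcal{T}_{2}=J_{\delta B}R_{\gamma A}^{\lambda}$, $\mathcal{T}=\mathcal{T}^{\epsilon}$, the coefficients $\omega_{0}=\frac{1-\epsilon}{\epsilon}$, $\omega_{1}=4\epsilon\gamma\alpha$, $\omega_{2}=4\epsilon\gamma\beta$, and Proposition \ref{3pro-c12} to secure (\ref{cond3.1}). The only difference is that you spell out what the paper leaves implicit --- the algebraic reduction of (\ref{3th3ad3-a4}) to (\ref{assump2}), the transfer of the conclusions back through the factor $\epsilon$, and the identification of the limit with $\text{zer}(A+B)$ via strong monotonicity --- all of which you do correctly.
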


\begin{proof}
Since the system {\rm(\ref{dynamic-system})} is equivalent to the system {\rm(\ref{tdynamic-system})} and $\mathcal{T}^{\epsilon}$
is quasi-nonexpansive,
this theorem can be verified by Theorem \ref{3th3ad3} with $\mathcal{T}_{1}=J_{\gamma A}$, $\mathcal{T}_{2}=J_{\delta B} R_{\gamma A}^{\lambda}$ and
$\mathcal{T}=\mathcal{T}^{\epsilon}$.
Let us set $w_{1}=4 \epsilon\gamma \alpha$ and $w_{2}=4\epsilon\gamma \beta$.
Then both obey (\ref{3th3ad3-a4}) by (\ref{assump2}).
In order to fulfil the assumptions ($H_{1}$)-($H_{4}$) of Theorem \ref{3th3ad3} it suffices to show that
 (\textbf{C1}) implies (\ref{cond3.1}).
This is immediate by Proposition \ref{3pro-c12}. The proof is complete.
\end{proof}

It turns out from Theorem \ref{3th3.4} (vi) that $J_{\gamma A}u(t)$ and $J_{\delta B} R_{\gamma A}^{\lambda} u(t)$
globally converge to the unique solution of the problem (\ref{SMP}).
In what follows, we turn our attention to case (\textbf{C2}) in which
$R_{\delta B}^{\mu}R_{\gamma A}^{\lambda}$ is nonexpansive.
We learn from Remark \ref{demiclose} that $I-R_{\delta B}^{\mu}R_{\gamma A}^{\lambda}$ is demiclosed at $0$, so is $I-\mathcal{T}^{\epsilon}$.
Let us set $w_{0}=\frac{1-\epsilon}{\epsilon}>0$, $w_{1}=\epsilon\mu(2+2 \gamma \alpha-\mu)\geq0$ and $w_{2}=\epsilon\mu(\mu-(2-2 \gamma \beta))\geq0$.
Then $w_{i}$, $i=0,1,2,$ cater for (\ref{3th3ad3-a4}).
On the other hand, it follows from Proposition \ref{3pro-c12} that  (\textbf{C2}) implies (\ref{cond3.1}).
Thus, the assumptions ($H_{1}$)-($H_{4}$) of Theorem \ref{3th3ad3} with
$\mathcal{T}_{1}=J_{\gamma A}$, $\mathcal{T}_{2}=J_{\delta B} R_{\gamma A}^{\lambda}$ and
$\mathcal{T}=\mathcal{T}^{\epsilon}$ are fulfilled in case (\textbf{C2}), provided that $\text{zer}(A+B)\neq\emptyset$.
By the analysis above, we derive the following theorem immediately.

\begin{theorem}\label{3th3.5-2}
Let $A : \mathbb{H}\rightrightarrows\mathbb{H}$ and $B : \mathbb{H}\rightrightarrows\mathbb{H}$ be
respectively maximally $\alpha$- and $\beta$-monotone.
Suppose that the parameters $\alpha,\beta,\gamma,\delta,\lambda,\mu$ satisfy  (\textbf{C2}),
$\text{zer}(A+B)\neq\emptyset$, and that the assumption (\textbf{A1}) holds.
Let $u(t)$ be the trajectory of the system {\rm (\ref{dynamic-system})}.
Then, for any $u^{*}\in{\rm Fix}(R_{\delta B}^{\mu}R_{\gamma A}^{\lambda})$, the following
statements are true:
\begin{itemize}
\item[(i)]
$\int^{+\infty}_{t_{0}}\|R_{\delta B}^{\mu}R_{\gamma A}^{\lambda}u(s)-u(s)\|^{2}ds<+\infty$.

\item[(ii)]$\lim\limits_{t\rightarrow+\infty}\|R_{\delta B}^{\mu}R_{\gamma A}^{\lambda}u(t)-u(t)\|=0$.

\item[(iii)]
there exists $\hat{u}\in \text{Fix}(R_{\delta B}^{\mu}R_{\gamma A}^{\lambda})$ such that
$w-\lim\limits_{t\rightarrow+\infty}u(t)=\hat{u}$.

\item[(iv)]If $\alpha+\beta>0$, then\\
$\int^{+\infty}_{t_{0}}\left\|w_{1}\left(J_{\gamma A}u(s)-J_{\gamma A} u^{*}\right)
+w_{2}\left(J_{\delta B} R_{\gamma A}^{\lambda} u(s)-J_{\delta B} R_{\gamma A}^{\lambda}u^{*} \right)\right\|^{2}ds<+\infty$,\\
$\lim\limits_{t\rightarrow+\infty}\left\|w_{1}\left(J_{\gamma A}u(t)-J_{\gamma A} u^{*}\right)
+w_{2}\left(J_{\delta B} R_{\gamma A}^{\lambda} u(t)-J_{\delta B} R_{\gamma A}^{\lambda}u^{*} \right)\right\|=0$, and\\
$\lim\limits_{t\rightarrow+\infty}J_{\gamma A}u(t)=J_{\gamma A} u^{*}=\lim\limits_{t\rightarrow+\infty}J_{\delta B} R_{\gamma A}^{\lambda} u(t)=J_{\delta B} R_{\gamma A}^{\lambda}u^{*}=\text{zer}(A+B)$.

\end{itemize}
\end{theorem}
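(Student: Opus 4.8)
The plan is to reduce everything to Theorem \ref{3th3ad3} applied to the equivalent system (\ref{tdynamic-system}), exactly as in the proof of Theorem \ref{3th3.4}, but now exploiting that in case (\textbf{C2}) the operator $R_{\delta B}^{\mu}R_{\gamma A}^{\lambda}$ is \emph{genuinely} nonexpansive. Concretely, I would take $\mathcal{T}=\mathcal{T}^{\epsilon}$, $\mathcal{T}_{1}=J_{\gamma A}$, $\mathcal{T}_{2}=J_{\delta B}R_{\gamma A}^{\lambda}$, and let the driving function in (\ref{f-dy-system}) be $\theta(t)/\epsilon$; since (\ref{tdynamic-system}) coincides with (\ref{dynamic-system}) via $I-\mathcal{T}^{\epsilon}=\epsilon(I-R_{\delta B}^{\mu}R_{\gamma A}^{\lambda})$, and $\inf_{t}\theta(t)/\epsilon>0$, assumption (\textbf{A1}) is inherited. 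The whole argument then amounts to checking the four hypotheses $(H_{1})$--$(H_{4})$ of Theorem \ref{3th3ad3}.

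First I would dispatch $(H_{1})$--$(H_{3})$. By Proposition \ref{3pro-c12}, (\textbf{C2}) implies (\ref{cond3.1}), so Proposition \ref{unique1}(i) together with Lemma \ref{prox-lip1}(i) gives that $J_{\gamma A}$, $J_{\delta B}$ and $R_{\delta B}^{\mu}R_{\gamma A}^{\lambda}$—hence $\mathcal{T}^{\epsilon}$, $\mathcal{T}_{1}$ and $\mathcal{T}_{2}$—are single-valued, of full domain and Lipschitz, whence uniformly continuous; this is $(H_{1})$. Hypothesis $(H_{2})$ is precisely Proposition \ref{unique1}(ii), namely $I-\mathcal{T}^{\epsilon}=\epsilon\mu\left(J_{\gamma A}-J_{\delta B}R_{\gamma A}^{\lambda}\right)$, with nonzero constant $\epsilon\mu$ playing the role of the ``$\mu$'' in $(H_{2})$. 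For $(H_{3})$, since $\mathrm{zer}(A+B)\neq\emptyset$ by hypothesis and Proposition \ref{unique1}(iii) states that $J_{\gamma A}$ maps $\mathrm{Fix}(R_{\delta B}^{\mu}R_{\gamma A}^{\lambda})$ onto $\mathrm{zer}(A+B)$, the fixed-point set $\mathrm{Fix}(\mathcal{T}^{\epsilon})=\mathrm{Fix}(R_{\delta B}^{\mu}R_{\gamma A}^{\lambda})$ cannot be empty.

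The substance is $(H_{4})$. I would read the required inequality (\ref{3th3ad-s1}) off directly from (\ref{le3.1-s0}) of Proposition \ref{unique1}(iv) by setting $y=u^{*}\in\mathrm{Fix}(\mathcal{T}^{\epsilon})$ (so $(I-\mathcal{T}^{\epsilon})u^{*}=0$), which produces exactly the weights $\omega_{0}=\frac{1-\epsilon}{\epsilon}=w_{0}$, $\omega_{1}=\epsilon\mu(2+2\gamma\alpha-\mu)=w_{1}$ and $\omega_{2}=\epsilon\mu(\mu-(2-2\gamma\beta))=w_{2}$. The one point needing care is that these obey (\ref{3th3ad3-a4}): the constraint $\mu\in[2-2\gamma\beta,\,2+2\gamma\alpha]$ from (\ref{3th3.2-s1}) forces $w_{1}\geq0$ and $w_{2}\geq0$, while $\epsilon\in(0,1)$ gives $w_{0}>0$; hence the cross term $\frac{w_{1}w_{2}}{(\epsilon\mu)^{2}(w_{1}+w_{2})}$ is nonnegative whenever $w_{1}+w_{2}>0$, so the second branch of (\ref{3th3ad3-a4}) holds, and the degenerate case $w_{1}=w_{2}=0$ is covered by the first branch.

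With $(H_{1})$--$(H_{4})$ in hand, Theorem \ref{3th3ad3} yields (i)--(iv) after two translations. Using $I-\mathcal{T}^{\epsilon}=\epsilon(I-R_{\delta B}^{\mu}R_{\gamma A}^{\lambda})$ converts the $\|\mathcal{T}^{\epsilon}u-u\|$ statements into the claimed ones about $\|R_{\delta B}^{\mu}R_{\gamma A}^{\lambda}u-u\|$, giving (i) and (ii); and since $R_{\delta B}^{\mu}R_{\gamma A}^{\lambda}$ is nonexpansive, Remark \ref{demiclose} makes $I-R_{\delta B}^{\mu}R_{\gamma A}^{\lambda}$, hence $I-\mathcal{T}^{\epsilon}$, demiclosed at $0$ \emph{automatically}, so Theorem \ref{3th3ad3}(iii) applies with no extra hypothesis—this is the single place where (\textbf{C2}) buys strictly more than (\textbf{C1}). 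Finally, because $w_{1}+w_{2}=2\epsilon\mu\gamma(\alpha+\beta)$, the condition $\alpha+\beta>0$ is exactly $w_{1}+w_{2}>0$, so Theorem \ref{3th3ad3}(iv) delivers the two displayed convergence lines of (iv); the closing identification with $\mathrm{zer}(A+B)$ follows from Proposition \ref{unique1}(iii) together with the fact that $\alpha+\beta>0$ makes $A+B$ strongly monotone, so $\mathrm{zer}(A+B)$ is a singleton. I do not anticipate a genuine obstacle—the real content is already packaged in Theorem \ref{3th3ad3} and Proposition \ref{unique1}—so the only thing to guard against is bookkeeping: keeping the factor $\epsilon$ and the $\theta/\epsilon$ rescaling straight when passing between the two systems and between the two operators.
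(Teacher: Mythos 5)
Your proposal is correct and follows essentially the same route as the paper: the paper also reduces to Theorem \ref{3th3ad3} applied to the equivalent system (\ref{tdynamic-system}) with $\mathcal{T}=\mathcal{T}^{\epsilon}$, $\mathcal{T}_{1}=J_{\gamma A}$, $\mathcal{T}_{2}=J_{\delta B}R_{\gamma A}^{\lambda}$, the same weights $w_{0}=\frac{1-\epsilon}{\epsilon}$, $w_{1}=\epsilon\mu(2+2\gamma\alpha-\mu)$, $w_{2}=\epsilon\mu(\mu-(2-2\gamma\beta))$, demiclosedness obtained for free from nonexpansiveness via Remark \ref{demiclose}, and Proposition \ref{3pro-c12} to pass from (\textbf{C2}) to (\ref{cond3.1}). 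Your write-up is in fact slightly more explicit than the paper's (the case split in (\ref{3th3ad3-a4}) and the identity $w_{1}+w_{2}=2\epsilon\mu\gamma(\alpha+\beta)$), but the argument is the same.
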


The next theorem serves to show that the system still converges
when the assumption (\textbf{A1}) is weakened  to (\textbf{A2}) and $\epsilon=0$ in (\textbf{C1}),
i.e., $\int^{+\infty}_{t_{0}}\theta(s)ds=+\infty$ and \begin{itemize}
  \item [(\textbf{C3})]:
  $\mu=\lambda=2$, $\gamma=\delta\in\mathbb{R}_{++}$, and
  \begin{equation}\label{3assump}
  \alpha+\beta>0\quad\text{and}\quad\frac{\gamma\alpha\beta}{\alpha+\beta}>-1.
\end{equation}
\end{itemize}
Note that $R_{\delta B}^{\mu}R_{\gamma A}^{\lambda}$ is Lipschitz continuous with a constant $L>0$
 in case (\textbf{C3}).
\begin{theorem}\label{th3.2}
Let $A : \mathbb{H}\rightrightarrows\mathbb{H}$ and $B : \mathbb{H}\rightrightarrows\mathbb{H}$ be
respectively maximally $\alpha$- and $\beta$-monotone.
Suppose that the parameters $\alpha,\beta,\gamma,\delta,\lambda,\mu$ satisfy (\textbf{C3}),
and that the assumption (\textbf{A2}) holds.
Let $u(t)$ be the trajectory of the system {\rm (\ref{dynamic-system})}.
Then the following
statements are true:
\begin{itemize}
\item[(i)]
$\int^{+\infty}_{t_{0}}\theta(s)\|R_{\delta B}^{\mu}R_{\gamma A}^{\lambda}u(s)-u(s)\|^{2}ds<+\infty$.

\item[(ii)]$\lim\limits_{t\rightarrow+\infty}\|R_{\delta B}^{\mu}R_{\gamma A}^{\lambda}u(t)-u(t)\|=0$.

\item[(iii)]If $R_{\delta B}^{\mu}R_{\gamma A}^{\lambda}$ satisfies the demiclosedness principle,
then there exists $\hat{u}\in \text{Fix}(R_{\delta B}^{\mu}R_{\gamma A}^{\lambda})$ such that
$w-\lim\limits_{t\rightarrow+\infty}u(t)=\hat{u}$.
\end{itemize}
\end{theorem}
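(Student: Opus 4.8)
The plan is to run a Lyapunov analysis \emph{directly} on the system (\ref{dynamic-system}) with $\mathcal{T}=R_{\delta B}^{\mu}R_{\gamma A}^{\lambda}$, rather than to quote Theorem~\ref{3th3ad} or Theorem~\ref{3th3ad3}. The reason one cannot simply invoke those results is the observation made just before Proposition~\ref{3pro-c12}: in case (\textbf{C3}) the operator $R:=R_{\delta B}^{\mu}R_{\gamma A}^{\lambda}$ need \emph{not} be quasi-nonexpansive, because the quantity $\Phi(\alpha,\beta,x,y)$ may be negative. So I would fix $u^{*}\in{\rm Fix}(R)$, set $V(t)=\|u(t)-u^{*}\|^{2}$, repeat the derivative computation of (\ref{3th3ad3-s3}), and then insert (\ref{le3.1-s1}) with $\mu=2$ together with the identity (\ref{a-identity}) exactly as in the passage producing (\ref{3th3ad-s1-1}). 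Dropping the nonnegative cross term $\theta(t)\omega_{2}'\|\omega_{1}(J_{\gamma A}u-J_{\gamma A}u^{*})+\omega_{2}(J_{\delta B}R_{\gamma A}^{\lambda}u-J_{\delta B}R_{\gamma A}^{\lambda}u^{*})\|^{2}$ (with $\omega_{2}'=\tfrac{1}{4\gamma(\alpha+\beta)}>0$), this reproduces the estimate (\ref{3th3ad3-s4}) in the form
\begin{equation*}
\frac{dV}{dt}\leq -c_{0}\,\theta(t)\,\|Ru(t)-u(t)\|^{2}+2\|u(t)-u^{*}\|\,\|f(t)\|,\qquad c_{0}:=1+\tfrac{\gamma\alpha\beta}{\alpha+\beta}.
\end{equation*}
The decisive point is that, although $R$ is not quasi-nonexpansive, the constant $c_{0}$ is \emph{strictly positive} under (\textbf{C3}) --- this is exactly what $\frac{\gamma\alpha\beta}{\alpha+\beta}>-1$ in (\ref{3assump}) buys. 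Thus Lemma~\ref{V-bounded} applied to the integrated inequality yields boundedness of $u$ by some $\mathcal{M}$ as in (\ref{ab-th3.1-s6}), and integrating once more gives statement (i).

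The heart of the proof, and the place where the hypothesis (\textbf{A1}) of Theorems~\ref{3th3.4}--\ref{3th3.5-2} is genuinely relaxed to (\textbf{A2}), is statement (ii). Since one only obtains $\int_{t_{0}}^{+\infty}\theta(s)\|Ru(s)-u(s)\|^{2}ds<+\infty$ and \emph{not} the unweighted integral, the argument of Theorem~\ref{3th3ad}(ii) (uniform continuity plus Lemma~\ref{Haraux}) is no longer available; this is the main obstacle. My plan is to show instead that $\lim_{t\to+\infty}\|Ru(t)-u(t)\|^{2}$ \emph{exists}, and then force it to be zero. For existence put $h(t)=\|Ru(t)-u(t)\|^{2}$. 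In case (\textbf{C3}) the operator $R$ is Lipschitz with some constant $L$, so by Remark~\ref{remark-1}(b) the map $t\mapsto Ru(t)$ is absolutely continuous with $\|\tfrac{d}{dt}Ru(t)\|\leq L\|\dot u(t)\|$ a.e.; using $\dot u=\theta(Ru-u)+f$ one gets $\|\tfrac{d}{dt}(Ru-u)\|\leq (L+1)\big(\theta(t)\|Ru-u\|+\|f\|\big)$, whence
\begin{equation*}
\frac{dh}{dt}\leq 2(L+1)\,\theta(t)\,\|Ru(t)-u(t)\|^{2}+2(L+1)\,\|Ru(t)-u(t)\|\,\|f(t)\|=:G(t).
\end{equation*}
Because $\|Ru-u\|\leq(L+1)\|u-u^{*}\|\leq(L+1)\mathcal{M}$ is bounded, $G\in\mathbb{L}^{1}([t_{0},+\infty))$ by (i) and $f\in\mathbb{L}^{1}$; as $h\geq0$ is bounded below, Lemma~\ref{V-existence} produces $\ell:=\lim_{t\to+\infty}h(t)\geq0$. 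Finally, if $\ell>0$ then $h(t)\geq\ell/2$ for large $t$, and (\textbf{A2}) gives $\int^{+\infty}\theta(s)h(s)ds\geq\frac{\ell}{2}\int^{+\infty}\theta(s)ds=+\infty$, contradicting (i); hence $\ell=0$, which is (ii).

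With (ii) established, statement (iii) follows from the Opial-type Lemma~\ref{opial} with $\Omega={\rm Fix}(R)$, exactly as in part (c) of the proof of Theorem~\ref{3th3ad}. Condition (i) of Lemma~\ref{opial} holds since, for any $z\in{\rm Fix}(R)$, the displayed estimate gives $\frac{d}{dt}\|u(t)-z\|^{2}\leq 2\mathcal{M}\|f(t)\|\in\mathbb{L}^{1}$, so $\lim_{t\to+\infty}\|u(t)-z\|$ exists by Lemma~\ref{V-existence}; condition (ii) holds because any weak sequential cluster point $\hat u$ of the bounded trajectory satisfies $R\hat u=\hat u$ by the demiclosedness principle together with $\|Ru(t)-u(t)\|\to0$ from (ii). I expect the only delicate bookkeeping to be the Lipschitz/absolute-continuity estimate for $\frac{d}{dt}\|Ru-u\|^{2}$ used in (ii); everything else is a direct transcription of the Lyapunov computations already carried out for Theorems~\ref{3th3ad} and \ref{3th3ad3}.
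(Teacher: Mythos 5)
Your proposal is correct and follows essentially the same route as the paper's proof: the same Lyapunov estimate yielding the coefficient $1+\tfrac{\gamma\alpha\beta}{\alpha+\beta}>0$ under (\textbf{C3}), the same device for (ii) of bounding $\tfrac{d}{dt}\|R_{\delta B}^{\mu}R_{\gamma A}^{\lambda}u(t)-u(t)\|^{2}$ by an integrable majorant via the Lipschitz continuity of $R_{\delta B}^{\mu}R_{\gamma A}^{\lambda}$ and Remark \ref{remark-1}(b), then Lemma \ref{V-existence} plus (\textbf{A2}) and (i) to force the limit to be zero, and the same Opial argument for (iii). The only cosmetic difference is your cruder constant $2(L+1)$ in the majorant, where the paper keeps the negative term $-\theta(t)\|R_{\delta B}^{\mu}R_{\gamma A}^{\lambda}u(t)-u(t)\|^{2}$ to obtain the coefficient $L$; both majorants are integrable, so the argument is unaffected.
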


\begin{proof}
Observe that $R_{\delta B}^{\mu}R_{\gamma A}^{\lambda}$ is single-valued, and has
full domain in that (\textbf{C3}) satisfies (\ref{cond3.1}). Taking $u^{*}\in \text{Fix}(R_{\delta B}^{\mu}R_{\gamma A}^{\lambda})$,
we consider the function
$
V(t)=\|u(t)-u^{*}\|^{2}.
$
Similar to the deducing of (\ref{3th3ad3-s3}) with $\mathcal{T}=R_{\delta B}^{\mu}R_{\gamma A}^{\lambda}$,
we have
\begin{eqnarray*}
\frac{dV}{dt}
&\leq&\theta(t)\left\|R_{\delta B}^{\mu}R_{\gamma A}^{\lambda}u(t)-R_{\delta B}^{\mu}R_{\gamma A}^{\lambda}(u^{*})\right\|^{2}-\theta(t)\|u(t)-u^{*}\|^{2}\nonumber \\
& &-\theta(t)\left\|R_{\delta B}^{\mu}R_{\gamma A}^{\lambda}u(t)-u(t)\right\|^{2}+2\|u(t)-u^{*}\|\|f(t)\|
\nonumber\\
&\overset{(\ref{le3.1-s1})}{\leq}&-4\gamma\alpha\theta(t)\left\|J_{\gamma A}u(t)-J_{\gamma A} u^{*}\right\|^{2}\nonumber\\
&&-4 \gamma \beta\theta(t)\left\|J_{\delta B} R_{\gamma A}^{\lambda}u(t)-J_{\delta B} R_{\gamma A}^{\lambda}u^{*} \right\|^{2}\nonumber \\
& &-\theta(t)\left\|R_{\delta B}^{\mu}R_{\gamma A}^{\lambda}u(t)-u(t)\right\|^{2}
+2\|u(t)-u^{*}\|\|f(t)\|.
\end{eqnarray*}
By (\ref{a-identity}) and Proposition \ref{unique1} (ii), it ensues that
\begin{eqnarray*}
\frac{dV}{dt}
&\leq&-4 \gamma\theta(t)\left( \alpha\left\|J_{\gamma A}u(t)-J_{\gamma A} u^{*}\right\|^{2}
+\beta\left\|J_{\delta B} R_{\gamma A}^{\lambda}u(t)-J_{\delta B} R_{\gamma A}^{\lambda}u^{*} \right\|^{2}\right)\nonumber \\
& &-\theta(t)\left\|R_{\delta B}^{\mu}R_{\gamma A}^{\lambda}u(t)-u(t)\right\|^{2}
+2\|u(t)-u^{*}\|\|f(t)\|\nonumber \\
&\overset{(\ref{a-identity})}{=}& -\frac{\gamma \alpha\beta\theta(t)}{\alpha+\beta}\|(I-R_{\delta B}^{\mu}R_{\gamma A}^{\lambda}) u(t)
-(I-R_{\delta B}^{\mu}R_{\gamma A}^{\lambda}) u^{*}\|^{2}\nonumber\\
&&-\frac{4 \gamma\theta(t)}{ \alpha+\beta}\left\| \alpha\left(J_{\gamma A} u(t)-J_{\gamma A} u^{*}\right)+ \beta\left(J_{\delta B} R_{\gamma A}^{\lambda} u(t)-J_{\delta B} R_{\gamma A}^{\lambda} u^{*}\right)\right\|^{2}\nonumber \\
& &-\theta(t)\left\|R_{\delta B}^{\mu}R_{\gamma A}^{\lambda}u(t)-u(t)\right\|^{2}+2\|u(t)-u^{*}\|\|f(t)\|\nonumber \\
&=&-\frac{4 \gamma\theta(t)}{ \alpha+\beta}\left\|\alpha\left(J_{\gamma A} u(t)-J_{\gamma A} u^{*}\right)+\beta\left(J_{\delta B} R_{\gamma A}^{\lambda} u(t)-J_{\delta B} R_{\gamma A}^{\lambda} u^{*}\right)\right\|^{2}\nonumber \\
&& -\theta(t)\left(1+\frac{\gamma\alpha\beta}{\alpha+\beta}\right)\|R_{\delta B}^{\mu}R_{\gamma A}^{\lambda}u(t)-u(t)\|^{2}+2\|u(t)-u^{*}\|\|f(t)\|\nonumber \\
&\leq& -\theta(t)\left(1+\frac{\gamma\alpha\beta}{\alpha+\beta}\right)\|R_{\delta B}^{\mu}R_{\gamma A}^{\lambda}u(t)-u(t)\|^{2}+2\|u(t)-u^{*}\|\|f(t)\|.
\end{eqnarray*}
Similar to the proof of Theorem \ref{3th3ad} (i), we get the assertion (i) and the boundedness of $u(t)$.
Owing to the Lipschitz continuity of $R_{\delta B}^{\mu}R_{\gamma A}^{\lambda}$,
$R_{\delta B}^{\mu}R_{\gamma A}^{\lambda}u(t)-u(t)$ is bounded
on $[t_{0},+\infty)$.
We learn from Remark \ref{remark-1} (b) that the function $ t\mapsto R_{\delta B}^{\mu}R_{\gamma A}^{\lambda}u(t)$ is almost everywhere differentiable
and $\|\frac{d}{d t} R_{\delta B}^{\mu}R_{\gamma A}^{\lambda}u(t)\|\leq L\|\frac{du(t)}{d t}\|$ holds for almost all $t \geq 0$.
Thus we deduce that
\begin{eqnarray}\label{3th3.2-s3}
&&\frac{d}{d t}\left(\frac{1}{2}\|R_{\delta B}^{\mu}R_{\gamma A}^{\lambda}u(t)-u(t)\|^{2}\right) \nonumber\\
&=&\left\langle\frac{d}{d t} R_{\delta B}^{\mu}R_{\gamma A}^{\lambda}u(t)-\frac{d u(t)}{d t}, R_{\delta B}^{\mu}R_{\gamma A}^{\lambda}u(t)-u(t)\right\rangle \nonumber\\
&=&-\left\langle\frac{d u(t)}{d t}, R_{\delta B}^{\mu}R_{\gamma A}^{\lambda}u(t)-u(t)\right\rangle+\left\langle\frac{d}{d t} R_{\delta B}^{\mu}R_{\gamma A}^{\lambda}u(t), R_{\delta B}^{\mu}R_{\gamma A}^{\lambda}u(t)-u(t)\right\rangle \nonumber\\
&\overset{(\ref{dynamic-system})}{=}&
-\theta(t) \left\| R_{\delta B}^{\mu}R_{\gamma A}^{\lambda}u(t)-u(t)\right\|^{2}-\left\langle f(t),R_{\delta B}^{\mu}R_{\gamma A}^{\lambda}u(t)-u(t)\right\rangle\nonumber\\
&&+\left\langle\frac{d}{d t} R_{\delta B}^{\mu}R_{\gamma A}^{\lambda}u(t), R_{\delta B}^{\mu}R_{\gamma A}^{\lambda}u(t)-u(t)\right\rangle \nonumber\\
&\leq&-\theta(t)\left\|R_{\delta B}^{\mu}R_{\gamma A}^{\lambda}u(t)-u(t)\right\|^{2}
-\left\langle f(t),R_{\delta B}^{\mu}R_{\gamma A}^{\lambda}u(t)-u(t)\right\rangle\nonumber\\
&&+L\left\|\frac{d u(t)}{d t}\right\| \cdot\left\|R_{\delta B}^{\mu}R_{\gamma A}^{\lambda}u(t)-u(t)\right\|\nonumber\\
&\overset{(\ref{dynamic-system})}{\leq}&(L-1)\left(\theta(t)\left\|R_{\delta B}^{\mu}R_{\gamma A}^{\lambda}u(t)-u(t)\right\|+\|f(t)\|\right) \cdot\left\|R_{\delta B}^{\mu}R_{\gamma A}^{\lambda}u(t)-u(t)\right\|\nonumber\\
&&-\left\langle f(t),R_{\delta B}^{\mu}R_{\gamma A}^{\lambda}u(t)-u(t)\right\rangle\nonumber\\
&\leq&(L-1)\theta(t)\left\|R_{\delta B}^{\mu}R_{\gamma A}^{\lambda}u(t)-u(t)\right\|^{2}+2\|f(t)\|\cdot\left\|R_{\delta B}^{\mu}R_{\gamma A}^{\lambda}u(t)-u(t)\right\|,
\end{eqnarray}
that is,
\begin{eqnarray*}
&&\frac{d}{d t}\left(\frac{1}{2}\|R_{\delta B}^{\mu}R_{\gamma A}^{\lambda}u(t)-u(t)\|^{2}\right) \nonumber\\
&\leq&L\theta(t)\left\|R_{\delta B}^{\mu}R_{\gamma A}^{\lambda}u(t)-u(t)\right\|^{2}+2\|f(t)\|\cdot\left\|R_{\delta B}^{\mu}R_{\gamma A}^{\lambda}u(t)-u(t)\right\|,
\end{eqnarray*}
Note that the right side is integrable owing to the boundedness
of $R_{\delta B}^{\mu}R_{\gamma A}^{\lambda}u(t)-u(t)$ and the assertion (i) as well as $f\in\mathbb{L}^{1}([t_{0},+\infty))$. Therefore,
it follows from Lemma \ref{V-existence} that $\lim\limits_{t\rightarrow+\infty}\left\|R_{\delta B}^{\mu}R_{\gamma A}^{\lambda}u(t)-u(t)\right\|$ exists,
and the assertion (ii) holds by the assumption (\textbf{A2}) and the assertion (i).
The last assertion is verified by a similar deducing of Theorem \ref{3th3ad} (iii) with $\mathcal{T}=R_{\delta B}^{\mu}R_{\gamma A}^{\lambda}$.
The proof is complete.
\end{proof}

\begin{remark}\label{remark-th4.3}
For the case:
$$
\lambda=\mu=2,\quad\gamma=\delta \in \mathbb{R}_{++}\quad\text{and}\quad\alpha+\beta>0,
$$
the convergence of the DR algorithm (\ref{iterative2}) researched in \cite{Dao-Phan2018}
requires the following constraint:
$$
\frac{\gamma\alpha\beta}{\alpha+\beta}>\epsilon-1,\quad 0<\epsilon<1.
$$
In contrast, Theorem \ref{th3.2} allows $\epsilon$ to be vanished, that is,
$$\frac{\gamma\alpha\beta}{\alpha+\beta}>-1.$$
\end{remark}

Noticing that $R_{\delta B}^{\mu}R_{\gamma A}^{\lambda}$ is nonexpansive and following a similar proof of Theorem \ref{th3.2}, we have the following conclusion.
\begin{theorem}\label{th3.3}
Let $A : \mathbb{H}\rightrightarrows\mathbb{H}$ and $B : \mathbb{H}\rightrightarrows\mathbb{H}$ be
respectively maximally $\alpha$- and $\beta$-monotone.
Suppose that the parameters $\alpha,\beta,\gamma,\delta,\lambda,\mu$ satisfy (\textbf{C2}), $\text{zer}(A+B)\neq\emptyset$,
and that the assumption (\textbf{A2}) holds.
Let $u(t)$ be the trajectory of the system {\rm (\ref{dynamic-system})}.
Then the following
statements are true:
\begin{itemize}
\item[(i)]
$\int^{+\infty}_{t_{0}}\theta(s)\|R_{\delta B}^{\mu}R_{\gamma A}^{\lambda}u(s)-u(s)\|^{2}ds<+\infty$.

\item[(ii)]$\lim\limits_{t\rightarrow+\infty}\|R_{\delta B}^{\mu}R_{\gamma A}^{\lambda}u(t)-u(t)\|=0$.

\item[(iii)]
There exists $\hat{u}\in \text{Fix}(R_{\delta B}^{\mu}R_{\gamma A}^{\lambda})$ such that
$w-\lim\limits_{t\rightarrow+\infty}u(t)=\hat{u}$.
\end{itemize}
\end{theorem}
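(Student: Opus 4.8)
The plan is to mirror the proof of Theorem \ref{th3.2}, exploiting the fact that in case (\textbf{C2}) the operator $R_{\delta B}^{\mu}R_{\gamma A}^{\lambda}$ is not merely Lipschitz but genuinely nonexpansive, which both simplifies the energy estimate and removes the need to assume the demiclosedness principle separately. First I would record the standing facts: by Proposition \ref{3pro-c12}, case (\textbf{C2}) implies (\ref{cond3.1}), so Proposition \ref{unique1}(i) guarantees that $R_{\delta B}^{\mu}R_{\gamma A}^{\lambda}$ is single-valued with full domain, and Proposition \ref{unique1}(iii) together with $\text{zer}(A+B)\neq\emptyset$ gives $\text{Fix}(R_{\delta B}^{\mu}R_{\gamma A}^{\lambda})\neq\emptyset$. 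Fixing $u^{*}\in\text{Fix}(R_{\delta B}^{\mu}R_{\gamma A}^{\lambda})$ and setting $V(t)=\|u(t)-u^{*}\|^{2}$, I would repeat the computation leading to (\ref{3th3ad3-s3}) with $\mathcal{T}=R_{\delta B}^{\mu}R_{\gamma A}^{\lambda}$; since nonexpansiveness gives $\|R_{\delta B}^{\mu}R_{\gamma A}^{\lambda}u(t)-R_{\delta B}^{\mu}R_{\gamma A}^{\lambda}u^{*}\|\leq\|u(t)-u^{*}\|$, the two $\theta(t)\|\cdot\|^{2}$ terms cancel and one is left with
\[
\frac{dV}{dt}\leq-\theta(t)\|R_{\delta B}^{\mu}R_{\gamma A}^{\lambda}u(t)-u(t)\|^{2}+2\|u(t)-u^{*}\|\,\|f(t)\|.
\]

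Integrating this from $t_{0}$ to $t$, dropping the nonpositive term and invoking Lemma \ref{V-bounded} exactly as in the proof of Theorem \ref{3th3ad}(i) would yield the boundedness of the trajectory, say $\|u(t)-u^{*}\|\leq\mathcal{M}$; feeding this bound back into the integrated inequality then gives assertion (i), namely $\int_{t_{0}}^{+\infty}\theta(s)\|R_{\delta B}^{\mu}R_{\gamma A}^{\lambda}u(s)-u(s)\|^{2}ds<+\infty$, because $f\in\mathbb{L}^{1}([t_{0},+\infty))$.

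For assertion (ii) I would run the auxiliary estimate (\ref{3th3.2-s3}) on $t\mapsto\tfrac12\|R_{\delta B}^{\mu}R_{\gamma A}^{\lambda}u(t)-u(t)\|^{2}$. Here nonexpansiveness ($L=1$) makes the $(L-1)\theta(t)\|\cdot\|^{2}$ term vanish, leaving
\[
\frac{d}{dt}\Big(\tfrac12\|R_{\delta B}^{\mu}R_{\gamma A}^{\lambda}u(t)-u(t)\|^{2}\Big)\leq 2\|f(t)\|\,\|R_{\delta B}^{\mu}R_{\gamma A}^{\lambda}u(t)-u(t)\|,
\]
whose right-hand side is integrable since $R_{\delta B}^{\mu}R_{\gamma A}^{\lambda}u(t)-u(t)$ is bounded (because $\|R_{\delta B}^{\mu}R_{\gamma A}^{\lambda}u(t)-u^{*}\|\leq\|u(t)-u^{*}\|\leq\mathcal{M}$) and $f\in\mathbb{L}^{1}$. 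Lemma \ref{V-existence} then furnishes the existence of $\lim_{t\to+\infty}\|R_{\delta B}^{\mu}R_{\gamma A}^{\lambda}u(t)-u(t)\|$. This limit must equal $0$: were it some $c>0$, then $\theta(s)\|R_{\delta B}^{\mu}R_{\gamma A}^{\lambda}u(s)-u(s)\|^{2}$ would eventually dominate a positive multiple of $\theta(s)$, whence $\int_{t_{0}}^{+\infty}\theta(s)\|R_{\delta B}^{\mu}R_{\gamma A}^{\lambda}u(s)-u(s)\|^{2}ds=+\infty$ by assumption (\textbf{A2}), contradicting (i). I expect this conversion of the integral bound (i) into a pointwise limit---replacing the uniform-continuity/Lemma \ref{Haraux} argument that was available only under (\textbf{A1})---to be the main subtle point, since it is precisely here that the weaker hypothesis (\textbf{A2}) is accommodated.

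Finally, for (iii) I would apply the Opial-type Lemma \ref{opial} with $\Omega=\text{Fix}(R_{\delta B}^{\mu}R_{\gamma A}^{\lambda})$. The first hypothesis holds because $\tfrac{dV}{dt}\leq 2\mathcal{M}\|f(t)\|\in\mathbb{L}^{1}$, so $\lim_{t\to+\infty}\|u(t)-z\|$ exists for every $z\in\Omega$ by Lemma \ref{V-existence} (and $u^{*}$ was arbitrary). The second hypothesis holds because every nonexpansive operator satisfies the demiclosedness principle (Remark \ref{demiclose}); combining this with assertion (ii), every weak sequential cluster point $\hat{u}$ of $u(t)$ lies in $\text{Fix}(R_{\delta B}^{\mu}R_{\gamma A}^{\lambda})$. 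Lemma \ref{opial} then delivers a single $\hat{u}\in\text{Fix}(R_{\delta B}^{\mu}R_{\gamma A}^{\lambda})$ with $w-\lim_{t\to+\infty}u(t)=\hat{u}$, completing the proof. In contrast to Theorem \ref{th3.2}(iii), no extra demiclosedness assumption is needed here, precisely because nonexpansiveness supplies it for free.
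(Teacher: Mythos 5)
Your proof is correct and takes essentially the same route as the paper: the paper's entire proof of this theorem is the remark that $R_{\delta B}^{\mu}R_{\gamma A}^{\lambda}$ is nonexpansive under (\textbf{C2}) and that one then follows the proof of Theorem \ref{th3.2}, which is exactly what you carry out (with the simplification $L=1$ in the estimate (\ref{3th3.2-s3})). In particular, your two key devices — using Lemma \ref{V-existence} on $t\mapsto\tfrac12\|R_{\delta B}^{\mu}R_{\gamma A}^{\lambda}u(t)-u(t)\|^{2}$ combined with (i) and (\textbf{A2}) to get assertion (ii), and invoking the demiclosedness of nonexpansive operators so that no extra hypothesis is needed in (iii) — are precisely the paper's intended argument.
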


\subsection{Rate of asymptotic regularity}

In this subsection, we are interested in the convergence rate of asymptotic regularity for the system (\ref{dynamic-system}).

\begin{theorem}\label{3th4.1}
Let $A : \mathbb{H}\rightrightarrows\mathbb{H}$ and $B : \mathbb{H}\rightrightarrows\mathbb{H}$ be
respectively maximally $\alpha$- and $\beta$-monotone.
Suppose that the parameters $\alpha,\beta,\gamma,\delta,\lambda,\mu$ satisfy  (\textbf{C2}),
$\text{zer}(A+B)\neq\emptyset$,
and that the assumption (\textbf{A1}) holds.
Let $u(t)$ be the trajectory of the system {\rm (\ref{dynamic-system})}.
If $\theta$ and $f$ are  subject to
 \begin{equation}\label{3th4.1-2}
\int^{+\infty}_{t_{0}}\theta(s)\int^{+\infty}_{s}\|f(\tau)\|d\tau ds<+\infty,
\end{equation}
then
\begin{equation}\label{3th4.1-2-0}
\|R_{\delta B}^{\mu}R_{\gamma A}^{\lambda}u(t)-u(t)\|=O\left(\frac{1}{\sqrt{t}}\right).
\end{equation}
In
particular,
if $f(t)\equiv 0$,
then
the convergence rate above can be improved to $o(\frac{1}{\sqrt{t}})$.
\end{theorem}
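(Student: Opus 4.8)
The plan is to turn the pointwise rate for the asymptotic regularity into a statement about the already-available weighted energy. Write $N:=R_{\delta B}^{\mu}R_{\gamma A}^{\lambda}$ and $g(t):=Nu(t)-u(t)$, so that the system (\ref{dynamic-system}) reads $u'(t)=\theta(t)g(t)+f(t)$. In case (\textbf{C2}) the operator $N$ is nonexpansive, hence the computation culminating in (\ref{3th3.2-s3}) applies verbatim with Lipschitz constant $L=1$, whereupon the term $(L-1)\theta\|g\|^{2}$ drops out, leaving the key differential inequality
\begin{equation*}
\frac{d}{dt}\Bigl(\tfrac12\|g(t)\|^{2}\Bigr)\leq 2\|f(t)\|\,\|g(t)\|\qquad\text{for a.e. }t\geq t_{0}.
\end{equation*}
This cancellation is precisely what nonexpansiveness buys and is the crux of the whole argument. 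In addition, applying Theorem \ref{3th3ad}(i) to the equivalent system (\ref{tdynamic-system}) (governed by the quasi-nonexpansive $\mathcal{T}^{\epsilon}$, with $\|\mathcal{T}^{\epsilon}u-u\|=\epsilon\|g\|$) supplies the weighted integrability $\int_{t_{0}}^{+\infty}\theta(s)\|g(s)\|^{2}\,ds<+\infty$.

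Next I would integrate the differential inequality on $[s,t']$ for $t_{0}\leq s\leq t'\leq t$ and invoke Lemma \ref{V-bounded} with $G=\|g(\cdot)\|$, $c=\|g(s)\|$ and $F=2\|f\|$; this is what lets me avoid differentiating $\|g\|$ at its zeros. The outcome is an ``almost monotonicity'' of the residual: for all $t_{0}\leq s\leq t$,
\begin{equation*}
\|g(t)\|\leq\|g(s)\|+2\int_{s}^{+\infty}\|f(\tau)\|\,d\tau=:\|g(s)\|+2F(s),
\end{equation*}
where $F(s)\downarrow0$ because $f\in\mathbb{L}^{1}([t_{0},+\infty))$.

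The main step is then to square this bound, multiply by $\theta(s)\geq0$ and integrate in $s$ over $[t_{0},t]$, which gives
\begin{equation*}
\|g(t)\|^{2}\int_{t_{0}}^{t}\theta(s)\,ds\leq 2\int_{t_{0}}^{t}\theta(s)\|g(s)\|^{2}\,ds+2\int_{t_{0}}^{t}\theta(s)F(s)^{2}\,ds.
\end{equation*}
The first term on the right is bounded by the weighted energy above. For the second I would use $F(s)^{2}\leq\|f\|_{\mathbb{L}^{1}}F(s)$, so that $\int_{t_{0}}^{t}\theta F^{2}\leq\|f\|_{\mathbb{L}^{1}}\int_{t_{0}}^{+\infty}\theta(s)\int_{s}^{+\infty}\|f(\tau)\|\,d\tau\,ds<+\infty$ by hypothesis (\ref{3th4.1-2}); this is the sole place that peculiar assumption is used. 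Thus the right-hand side is bounded by a constant, while assumption (\textbf{A1}) gives $\int_{t_{0}}^{t}\theta(s)\,ds\geq(\inf\theta)(t-t_{0})$, and (\ref{3th4.1-2-0}) follows.

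Finally, when $f\equiv0$ the first step reduces to $\frac{d}{dt}\|g(t)\|^{2}\leq0$, so $\|g(\cdot)\|$ is nonincreasing. For each $t$, monotonicity yields $\|g(t)\|^{2}\int_{t/2}^{t}\theta(s)\,ds\leq\int_{t/2}^{t}\theta(s)\|g(s)\|^{2}\,ds$; dividing by $\int_{t/2}^{t}\theta\geq(\inf\theta)\,t/2$ gives $t\|g(t)\|^{2}\leq\frac{2}{\inf\theta}\int_{t/2}^{+\infty}\theta(s)\|g(s)\|^{2}\,ds\to0$ as the tail of a convergent integral, i.e. $\|g(t)\|=o(1/\sqrt{t})$. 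The principal obstacle is really the packaging in the first two steps: one must bound $\|g(t)\|$ by $\|g(s)\|+2F(s)$ for all $s\leq t$ and integrate \emph{that} against $\theta(s)\,ds$, rather than try to estimate $\|g(t)\|$ directly, so that the perturbation is controlled exactly by the integral condition (\ref{3th4.1-2}).
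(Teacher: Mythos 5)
Your proof is correct and follows essentially the same route as the paper's: writing $g(t):=R_{\delta B}^{\mu}R_{\gamma A}^{\lambda}u(t)-u(t)$, you use the same differential inequality $\frac{d}{dt}\bigl(\tfrac12\|g(t)\|^2\bigr)\le 2\|f(t)\|\,\|g(t)\|$ coming from nonexpansiveness in case (\textbf{C2}), the same weighted integrability $\int_{t_0}^{+\infty}\theta(s)\|g(s)\|^2\,ds<+\infty$, the same device of integrating an almost-monotonicity bound for the residual against $\theta(s)\,ds$ over $[t_0,t]$ with hypothesis (\ref{3th4.1-2}) controlling the perturbation term, and finally assumption (\textbf{A1}) to divide by $\int_{t_0}^{t}\theta(s)\,ds\ge(\inf\theta)(t-t_0)$. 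The only differences are cosmetic: the paper gets the almost-monotonicity directly in squared form, $\|g(t)\|^2\le\|g(s)\|^2+2\mathcal{M}_{RR}\int_s^t\|f(\tau)\|\,d\tau$ with $\mathcal{M}_{RR}=\sup_t\|g(t)\|$, which avoids invoking Lemma \ref{V-bounded} and also the harmless constant slip in your squaring step (it should be $2\|g(s)\|^2+8F(s)^2$, not $2F(s)^2$), and for the $o(1/\sqrt{t})$ conclusion when $f\equiv 0$ the paper merely cites a parallel of \cite[Theorem 11]{c29Bot}, whereas you spell out that same monotonicity-plus-vanishing-tail argument explicitly.
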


\begin{proof}
Observe that 
all the conditions in Theorem {\rm\ref{th3.2}} are satisfied.
Taking arbitrarily $u^{*}\in{\rm Fix}(R_{\delta B}^{\mu}R_{\gamma A}^{\lambda})$,
it is from the proof of Theorem \ref{th3.2} that
$\|R_{\delta B}^{\mu}R_{\gamma A}^{\lambda}u(t)-u(t)\|$ and $\|u(t)-u^{*}\|$
are bounded on $[t_{0},+\infty)$. To lighten the notion, let
\begin{equation}\label{3th4.1-s2-1}
\mathcal{M}_{\infty}=2\int^{+\infty}_{t_{0}}\|u(s)-u^{*}\|\|f(s)\|ds<+\infty
\end{equation}
and
$$\mathcal{M}_{RR}=\sup\limits_{t\in(t_{0},+\infty)}\|R_{\delta B}^{\mu}R_{\gamma A}^{\lambda}u(t)-u(t)\|<+\infty.$$
Consider the auxiliary function:
\begin{equation*}
V(t)=\|u(t)-u^{*}\|^{2}.
\end{equation*}
Then, with a parallel deducing of (\ref{ab-th3.1-s3}), we get
\begin{eqnarray}\label{3th4.1-s3}
\int^{t}_{t_{0}}\theta(s)\|R_{\delta B}^{\mu}R_{\gamma A}^{\lambda}u(s)-u(s)\|^{2}ds
&\leq& V(t_{0})+2\int^{t}_{t_{0}}\|u(s)-u^{*}\|\|f(s)\|ds\nonumber \\
&\leq&V(t_{0})+M_{\infty}.
\end{eqnarray}
On the other hand, by a parallel deducing of (\ref{3th3.2-s3}) and noting the nonexpansiveness of $R_{\delta B}^{\mu}R_{\gamma A}^{\lambda}$, one has
\begin{eqnarray*}
\frac{d}{d t}\left(\frac{1}{2}\|R_{\delta B}^{\mu}R_{\gamma A}^{\lambda}u(t)-u(t)\|^{2}\right)
\leq2\|f(t)\|\cdot\left\|R_{\delta B}^{\mu}R_{\gamma A}^{\lambda}u(t)-u(t)\right\|,
\end{eqnarray*}
which implies that for any $t_{0}\leq s\leq t$,
\begin{eqnarray}\label{3th4.1-s4}
\|R_{\delta B}^{\mu}R_{\gamma A}^{\lambda}u(t)-u(t)\|^{2}\leq\|R_{\delta B}^{\mu}R_{\gamma A}^{\lambda}u(s)-u(s)\|^{2}+2\mathcal{M}_{RR}\int^{t}_{s}\|f(\tau)\|d\tau.
\end{eqnarray}
Consequently, we have
\begin{eqnarray*}\label{3th4.1-s5}
&&\left(\inf\limits_{t\in[t_{0},+\infty)}\theta(t)\right)(t-t_{0})\|R_{\delta B}^{\mu}R_{\gamma A}^{\lambda}u(t)-u(t)\|^{2}\nonumber \\
&\leq&\|R_{\delta B}^{\mu}R_{\gamma A}^{\lambda}u(t)-u(t)\|^{2}\int^{t}_{t_{0}}\theta(s)ds\nonumber \\
&=&\int^{t}_{t_{0}}\theta(s)\|R_{\delta B}^{\mu}R_{\gamma A}^{\lambda}u(t)-u(t)\|^{2}ds\nonumber \\
&\overset{(\ref{3th4.1-s4})}{\leq}&\int^{t}_{t_{0}}\theta(s)\left(\|R_{\delta B}^{\mu}R_{\gamma A}^{\lambda}u(s)-u(s)\|^{2}
+2\mathcal{M}_{RR}\int^{t}_{s}\|f(\tau)\|d\tau\right)ds\nonumber \\
&\overset{(\ref{3th4.1-s3})}{\leq}&V(u_{0})+\mathcal{M}_{\infty}+2\mathcal{M}_{RR}
\int^{t}_{t_{0}}\theta(s)\int^{t}_{s}\|f(\tau)\|d\tau ds,
\end{eqnarray*}
namely,
\begin{eqnarray}\label{3th4.1-s6}
\|R_{\delta B}^{\mu}R_{\gamma A}^{\lambda}u(t)-u(t)\|^{2}\leq\frac{\mathcal{K}}{\left(\inf\limits_{t\in[t_{0},+\infty)}\theta(t)\right)(t-t_{0})},
\end{eqnarray}
where $\mathcal{K}=V(u_{0})+\mathcal{M}_{\infty}+2\mathcal{M}_{RR}\int^{+\infty}_{t_{0}}\theta(s)\int^{+\infty}_{s}\|f(\tau)\|d\tau ds<+\infty$
by (\ref{3th4.1-2}).
So, (\ref{3th4.1-2-0}) is verified.
When the perturbation $f$ is vanishing, noting that $R_{\delta B}^{\mu}R_{\gamma A}^{\lambda}$ is nonexpansive in case (\textbf{C2}),
we get the last conclusion by a parallel proof of \cite[Theorem 11]{c29Bot}.
The proof is complete.
\end{proof}

\begin{remark}
Let us mention that the condition (\ref{3th4.1-2}) is not restrictive.
In fact, by virtue of the Fubini's Theorem, (\ref{3th4.1-2}) is implied by the following condition:
\begin{equation}\label{remark-3th4.1}
\sup\limits_{t\in[t_{0},+\infty)}\theta(t)<+\infty\quad\text{and}\quad\int^{+\infty}_{t_{0}}s\|f(s)\|ds<+\infty.
\end{equation}
A straightforward example of (\ref{remark-3th4.1}) is that $\theta(t)\equiv M>0$, and $f(t)=\frac{1}{t^{p}}$ with $p>2$.
It is worth mentioning that a discretization version of (\ref{remark-3th4.1}) has been used  in \cite[Theorem 1]{Liang-2016} for a convergence rate analysis
of  the inexact Krasnosel'ski\u{i}-Mann iteration algorithm (\ref{3discret-ab1}) with $\mathcal{T}$ being a nonexpansive operator. Of course, the condition (\ref{3th4.1-2}) holds automatically when $f(t)\equiv 0$.
\end{remark}

\begin{remark}
Recently, Dao and Phan \cite{Dao-Phan2018} proved the global weak convergence of the adaptive DR algorithm (\ref{iterative2})  to a fixed point of the adaptive Douglas and Rachford operator $\tilde{T}$,  which is used to derive a solution of the problem (\ref{SMP}) in the "strongly+weakly" monotone setting. The $o(\frac{1}{\sqrt{k}})$ rate of asymptotic regularity of  $\tilde{T}$ was also established. See \cite[Theorem 4.5]{Dao-Phan2018}. As a comparison, we have established  continuous analogs for the adaptive Douglas-Rachford dynamical system (\ref{dynamic-system})  in [Theorem {\rm \ref{3th3.4}}, Theorem {\rm\ref{3th3.5-2}}, Theorem {\rm\ref{th3.2}} and Theorem \ref{3th4.1}].
\end{remark}

\subsection{ Exponential-type convergence }

In this subsection, we study the  exponential-type convergence   rate of the trajectory of  the system (\ref{dynamic-system}).
Note that $R_{\delta B}^{\mu}R_{\gamma A}^{\lambda}$ is quasi-nonexpansive in both cases (\textbf{C1}) and (\textbf{C2}).
The following  result on the exponential-type convergence  rate under the metric subregularity condition follows directly from Theorem \ref{3th4ad}.

\begin{theorem}\label{3th4.2}
Suppose that all the conditions in Theorem {\rm \ref{3th3.4}}, Theorem {\rm\ref{3th3.5-2}} or Theorem {\rm\ref{th3.2}} are satisfied.
Let $u(t)$ be the trajectory of  the system {\rm (\ref{dynamic-system})}, and let $\hat{u}\in{\rm Fix}(R_{\delta B}^{\mu}R_{\gamma A}^{\lambda})$ such that $w-\lim\limits_{t\rightarrow+\infty}u(t)=\hat{u}$.
Suppose that
$I-R_{\delta B}^{\mu}R_{\gamma A}^{\lambda}$ is metrically subregular at $\hat{u}$ for $0$ with a ball $B(\hat{u},r)$
and modulus $\kappa$.
If $r>\lim\limits_{t\rightarrow+\infty}\|u(t)-\hat{u}\|$,
then
there exist $t'\geq t_{0}$ and $M_{2}>0$ such that for all $t\geq t'$
  \begin{eqnarray*}
  &&{\rm dist}^{2}\left(u(t),{\rm Fix}(R_{\delta B}^{\mu}R_{\gamma A}^{\lambda})\right)\nonumber\\
  &\leq&
  e^{-\frac{1}{\kappa^{2}}\int^{t}_{t'}\theta(s)ds}
  \left(M_{2}\int^{t}_{t'}\|f(s)\|e^{\frac{1}{\kappa^{2}}\int^{s}_{t'}\theta(x)dx}ds+\|u(t')-\hat{u}\|^{2}\right).
  \end{eqnarray*}
Furthermore, if
\begin{equation}\label{3th4.2-1}
\int^{+\infty}_{t_0}\|f(s)\|e^{\frac{1}{\kappa^{2}}\int^{s}_{t_0}\theta(x)dx}ds<+\infty,
\end{equation}
then
  \begin{eqnarray*}
  {\rm dist}\left(u(t),{\rm Fix}(R_{\delta B}^{\mu}R_{\gamma A}^{\lambda})\right)
  =O\left(e^{-\frac{1}{\kappa^{2}}\int^{t}_{t'}\theta(s)ds}\right).
  \end{eqnarray*}
\end{theorem}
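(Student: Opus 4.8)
The plan is to recognize the system (\ref{dynamic-system}) as a concrete instance of the abstract equation (\ref{f-dy-system}) and then invoke Theorem \ref{3th4ad} essentially verbatim. Rewriting (\ref{dynamic-system}) as $\frac{du}{dt}=\theta(t)[R_{\delta B}^{\mu}R_{\gamma A}^{\lambda}u(t)-u(t)]+f(t)$ exhibits it as exactly (\ref{f-dy-system}) with the choice $\mathcal{T}=R_{\delta B}^{\mu}R_{\gamma A}^{\lambda}$. Thus the first task is to check, one by one, that the hypotheses of Theorem \ref{3th4ad} are met in each of the three regimes quoted. Under $(\textbf{C1})$, $(\textbf{C2})$ or $(\textbf{C3})$ the operator $R_{\delta B}^{\mu}R_{\gamma A}^{\lambda}$ is quasi-nonexpansive with ${\rm Fix}(R_{\delta B}^{\mu}R_{\gamma A}^{\lambda})\neq\emptyset$; the demiclosedness principle either is assumed outright (Theorem \ref{3th3.4}(iii), Theorem \ref{th3.2}(iii)) or holds automatically because $R_{\delta B}^{\mu}R_{\gamma A}^{\lambda}$ is nonexpansive in case $(\textbf{C2})$ (Remark \ref{demiclose}); the weak limit $\hat{u}\in{\rm Fix}(R_{\delta B}^{\mu}R_{\gamma A}^{\lambda})$ is furnished by the respective convergence theorem; and the metric subregularity of $I-R_{\delta B}^{\mu}R_{\gamma A}^{\lambda}$ together with $r>\lim_{t\to+\infty}\|u(t)-\hat{u}\|$ is part of the present hypotheses. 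With these verified, Theorem \ref{3th4ad} applies with $\mathcal{T}=R_{\delta B}^{\mu}R_{\gamma A}^{\lambda}$ and delivers the first displayed inequality, with $M_{2}$ taken equal to the constant $M_{1}$ produced there.

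For the ``Furthermore'' assertion I would exploit the semigroup identity for the exponential weight to decouple the integral from $t'$. Writing $\int_{t'}^{s}\theta=\int_{t_{0}}^{s}\theta-\int_{t_{0}}^{t'}\theta$ gives
\begin{equation*}
\int_{t'}^{t}\|f(s)\|e^{\frac{1}{\kappa^{2}}\int_{t'}^{s}\theta(x)dx}ds
=e^{-\frac{1}{\kappa^{2}}\int_{t_{0}}^{t'}\theta(x)dx}\int_{t'}^{t}\|f(s)\|e^{\frac{1}{\kappa^{2}}\int_{t_{0}}^{s}\theta(x)dx}ds.
\end{equation*}
By the summability hypothesis (\ref{3th4.2-1}) the right-hand integral is bounded above, uniformly in $t$, by the finite constant $\int_{t_{0}}^{+\infty}\|f(s)\|e^{\frac{1}{\kappa^{2}}\int_{t_{0}}^{s}\theta(x)dx}ds$. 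Consequently the whole bracketed term in the first inequality is dominated by a constant $C<+\infty$ independent of $t$, whence ${\rm dist}^{2}(u(t),{\rm Fix}(R_{\delta B}^{\mu}R_{\gamma A}^{\lambda}))\leq C\,e^{-\frac{1}{\kappa^{2}}\int_{t'}^{t}\theta(s)ds}$. Taking square roots then yields the claimed exponential-type decay $O\bigl(e^{-\frac{1}{\kappa^{2}}\int_{t'}^{t}\theta(s)ds}\bigr)$ (with the rate constant halved in the exponent, in line with Corollary \ref{3co4ad} for the unperturbed case).

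As for difficulties, I expect almost no analytic obstacle: once the identification $\mathcal{T}=R_{\delta B}^{\mu}R_{\gamma A}^{\lambda}$ is made, the first inequality is a direct transcription of Theorem \ref{3th4ad}, and the second is an elementary manipulation of the integrating factor. The only point that will require care is the bookkeeping of hypotheses across the three cases $(\textbf{C1})$--$(\textbf{C3})$---in particular confirming that the demiclosedness principle is genuinely available in each (assumed in the quasi-nonexpansive regimes, automatic in the nonexpansive one)---so that Theorem \ref{3th4ad} applies without smuggling in any extra assumption.
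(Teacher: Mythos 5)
Your proposal matches the paper's own treatment: the paper disposes of this theorem in essentially one line, remarking that $R_{\delta B}^{\mu}R_{\gamma A}^{\lambda}$ is quasi-nonexpansive under the quoted parameter regimes and that the bound then ``follows directly from Theorem \ref{3th4ad}'' applied with $\mathcal{T}=R_{\delta B}^{\mu}R_{\gamma A}^{\lambda}$ (via the equivalence of (\ref{dynamic-system}) with (\ref{f-dy-system})), so your hypothesis bookkeeping across the three cases and your integrating-factor manipulation for the ``Furthermore'' clause simply make explicit what the paper leaves implicit. Your parenthetical caveat is also the honest reading: taking square roots of ${\rm dist}^{2}=O\bigl(e^{-\frac{1}{\kappa^{2}}\int_{t'}^{t}\theta(s)ds}\bigr)$ yields the exponent $\frac{1}{2\kappa^{2}}$ rather than $\frac{1}{\kappa^{2}}$ for ${\rm dist}$ itself, consistent with Corollary \ref{3co4ad} --- a slip in the paper's stated $O$-rate, not a gap in your argument.
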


\begin{remark}
(i) Condition (\ref{3th4.2-1}) is  mild  and it is also satisfied when $f$ and $\theta$ are taken as in Remark  \ref{3remark-rate} (ii). Of course, it is satisfied automatically when $f(t)\equiv 0$. (ii) When  $A$ and $B$ are subdifferentials of  functions from $\mathbb{H}\to \mathbb{R}\cup\{+\infty\}$ and $\mu=\lambda=2$,  the metric subregularity of $I-R_{\delta B}^{\mu}R_{\gamma A}^{\lambda}$  has been used in  \cite[Theorem 6.1]{c29Guo} for establishing the local linear convergence of the DR algorithm for the "strongly+weakly" convex minimization problem.
\end{remark}

\begin{corollary}
Suppose that all the conditions in Theorem {\rm \ref{3th3.4}} or Theorem {\rm\ref{3th3.5-2}} are satisfied.
Let $u(t)$ be the trajectory of the system {\rm (\ref{dynamic-system})} with $f(t)\equiv 0$, and let $\hat{u}\in{\rm Fix}(R_{\delta B}^{\mu}R_{\gamma A}^{\lambda})$ such that $w-\lim\limits_{t\rightarrow+\infty}u(t)=\hat{u}$.
Suppose that
$I-R_{\delta B}^{\mu}R_{\gamma A}^{\lambda}$ is metrically subregular at $\hat{u}$ for $0$ with a ball $B(\hat{u},r)$
and modulus $\kappa$.
If $r>\lim\limits_{t\rightarrow+\infty}\|u(t)-\hat{u}\|$, then
there exist $t'\geq t_{0}$ and $\Theta_{0}>0$ such that
\begin{eqnarray*}
  {\rm dist}\left(u(t),{\rm Fix}(R_{\delta B}^{\mu}R_{\gamma A}^{\lambda})\right)
  \leq \|u(t')-\hat{u}\|e^{-\Theta_{0}\left(t-t'\right)},\quad t\geq t'.
\end{eqnarray*}
\end{corollary}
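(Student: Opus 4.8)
The plan is to reduce the statement to Corollary \ref{3co4ad} applied to the operator $\mathcal{T}=R_{\delta B}^{\mu}R_{\gamma A}^{\lambda}$, and then to convert the integral exponent $\int_{t'}^{t}\theta(s)\,ds$ into genuine exponential decay in $t-t'$ by invoking assumption (\textbf{A1}), which is in force under both Theorem \ref{3th3.4} and Theorem \ref{3th3.5-2}.

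First I would verify that $\mathcal{T}=R_{\delta B}^{\mu}R_{\gamma A}^{\lambda}$ meets the hypotheses of Corollary \ref{3co4ad}. Under either (\textbf{C1}) or (\textbf{C2}), Proposition \ref{unique1} together with a derivation parallel to (\ref{3th3ad-s1-1}) shows that $\mathcal{T}$ is quasi-nonexpansive with ${\rm Fix}(\mathcal{T})\neq\emptyset$; in case (\textbf{C2}) the estimate (\ref{le3.1-s1}) combined with (\ref{para-sec-3}) shows that $\mathcal{T}$ is in fact nonexpansive, so by Remark \ref{demiclose} it satisfies the demiclosedness principle, whereas in case (\textbf{C1}) this property is part of the conditions inherited from Theorem \ref{3th3.4}. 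The weak limit $\hat{u}\in{\rm Fix}(\mathcal{T})$, the metric subregularity of $I-\mathcal{T}$ at $\hat{u}$ for $0$, and the condition $r>\lim_{t\to+\infty}\|u(t)-\hat{u}\|$ are all assumed directly.

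Since $f\equiv 0$, Corollary \ref{3co4ad} then applies verbatim and produces a $t'\geq t_{0}$ with
$${\rm dist}\left(u(t),{\rm Fix}(\mathcal{T})\right)\leq \|u(t')-\hat{u}\|\,e^{-\frac{1}{2\kappa^{2}}\int_{t'}^{t}\theta(s)\,ds},\qquad t\geq t'.$$
Writing $c:=\inf_{t\in[t_{0},+\infty)}\theta(t)>0$, which is positive by (\textbf{A1}), I would bound $\int_{t'}^{t}\theta(s)\,ds\geq c\,(t-t')$ for all $t\geq t'$, so that
$$e^{-\frac{1}{2\kappa^{2}}\int_{t'}^{t}\theta(s)\,ds}\leq e^{-\frac{c}{2\kappa^{2}}(t-t')}.$$
Setting $\Theta_{0}:=\frac{c}{2\kappa^{2}}>0$ yields the claimed estimate.

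The computation is short and presents no serious analytic obstacle, as it is essentially a specialization of the already-established Corollary \ref{3co4ad}. The only point that genuinely requires care is confirming that all the structural hypotheses of Corollary \ref{3co4ad}—quasi-nonexpansiveness and, crucially, the demiclosedness principle—are available in both parameter regimes; for a merely quasi-nonexpansive operator demiclosedness is not automatic, and in case (\textbf{C1}) it must be supplied explicitly by the conditions carried over from Theorem \ref{3th3.4}.
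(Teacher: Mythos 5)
Your proposal is correct and follows essentially the same route as the paper: the paper's own proof likewise applies Corollary \ref{3co4ad} to $R_{\delta B}^{\mu}R_{\gamma A}^{\lambda}$ and uses assumption (\textbf{A1}) to replace $\int_{t'}^{t}\theta(s)\,ds$ by $\inf_{t}\theta(t)\cdot(t-t')$, defining $\Theta_{0}$ accordingly. In fact your constant $\Theta_{0}=\frac{1}{2\kappa^{2}}\inf_{t}\theta(t)$ is the correct one, whereas the paper's printed formula $\Theta_{0}=-\frac{\inf_{t\in[t',+\infty)}\theta(t)}{\kappa^{2}}$ contains an evident typographical slip (a stray minus sign and a missing factor $2$).
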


\begin{proof}
Set $$\Theta_{0}=-\frac{\inf\limits_{t\in[t',+\infty)}\theta(t)}{\kappa^{2}}.$$
Then $\Theta_{0}>0$ by the assumption (\textbf{A1}).
With the help of Corollary \ref{3co4ad}  we obtain the desired results.
The proof is complete.
\end{proof}

Next, we use a Lipschitz assumption instead of the metric subregularity
to derive another exponential-type convergence of the system (\ref{dynamic-system}) in case (\textbf{C2}).
Notice that a contraction operator (whose Lipschitz's constant less than 1) has a unique fixed point on $\mathbb{H}$ (by  the Banach-Picard's Theorem).

\begin{theorem}\label{3th4.6}
Let $A : \mathbb{H}\rightarrow\mathbb{H}$ be $\alpha$-monotone and Lipschitz continuous with constant $l$ such that $l\geq|\alpha|$,
and $B : \mathbb{H}\rightrightarrows\mathbb{H}$ be
maximally $\beta$-monotone.
Suppose that the parameters $\alpha,\beta,\gamma,\delta,\lambda,\mu$ satisfy (\textbf{C2}) and $\mu\neq 2+2 \gamma \alpha$.
Then $R_{\delta B}^{\mu}R_{\gamma A}^{\lambda}$ is Lipschitz continuous with constant
 \begin{eqnarray}\label{3th4.6-s1}\zeta
=\sqrt{1-\frac{\lambda(\mu-1)^{2}[(\lambda-1)(2+2 \gamma \alpha)-\lambda]}{1+2 \gamma \alpha+\gamma^{2} l^{2}}}<1.
\end{eqnarray}
Let $u(t)$ be the trajectory of the system {\rm (\ref{dynamic-system})} and ${\rm Fix}(R_{\delta B}^{\mu}R_{\gamma A}^{\lambda})=\{u^{*}\}$. Then there exists $M_{3}>0$ such that
  \begin{eqnarray}\label{3th4.3g-s2}
  &&\|u(t)-u^{*}\|^{2}\nonumber\\
  &\leq&
  e^{-(1-\zeta)\int^{t}_{t_{0}}\theta(s)ds}
  \left(M_{3}\int^{t}_{t_{0}}\|f(s)\|e^{(1-\zeta)\int^{s}_{t_{0}}\theta(x)dx}ds+\|u_{0}-u^{*}\|^{2}\right).
  \end{eqnarray}
Furthermore, if the assumption (\textbf{A2}) holds and $\int^{+\infty}_{t_{0}}\|f(s)\|e^{\int^{s}_{t_{0}}\theta(x)dx}ds<+\infty,$
then
  \begin{equation}\label{3th4.3g-s2-3}
  \|u(t)-u^{*}\|=O\left(e^{-\frac{1-\zeta}{2}\int^{t}_{t_{0}}\theta(s)ds}\right).
  \end{equation}
\end{theorem}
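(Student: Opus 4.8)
The plan is to show that under (\textbf{C2}) with $\mu\neq 2+2\gamma\alpha$ the operator $\mathcal{T}:=R_{\delta B}^{\mu}R_{\gamma A}^{\lambda}$ is a strict contraction, and then to run the Lyapunov argument of Theorem \ref{3th4ad} with the contraction factor $\zeta$ playing the role previously played by the subregularity modulus.

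First I would establish the contraction constant. Since $A$ is single-valued, $\alpha$-monotone and $l$-Lipschitz it is maximally $\alpha$-monotone, and (\textbf{C2}) gives $\lambda(1+2\gamma\alpha)-2(1+\gamma\alpha)=\frac{2+2\gamma\alpha-\mu}{\mu-1}\geq 0$, so Lemma \ref{prox-lip1}(iii) applies and shows $R_{\gamma A}^{\lambda}$ is Lipschitz; a direct simplification using $(\lambda-1)(\mu-1)=1$ rewrites that constant as $(\lambda-1)\zeta$. Likewise Lemma \ref{prox-lip1}(ii) with $\nu=\mu$ (together with $\mu\geq 2-2\gamma\beta$ from (\ref{3th3.2-s1}), which renders the $J_{\delta B}$-coefficient nonpositive) gives that $R_{\delta B}^{\mu}$ is Lipschitz with constant $\mu-1$. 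Multiplying, and invoking $(\lambda-1)(\mu-1)=1$ once more,
\begin{equation*}
\|\mathcal{T}x-\mathcal{T}y\|\leq(\mu-1)(\lambda-1)\zeta\,\|x-y\|=\zeta\,\|x-y\|.
\end{equation*}
To see $\zeta<1$ I would check the subtracted term in (\ref{3th4.6-s1}) is strictly positive: its denominator obeys $1+2\gamma\alpha+\gamma^{2}l^{2}\geq(1+\gamma\alpha)^{2}>0$ because $l\geq|\alpha|$ and $1+2\gamma\alpha>0$, while $\lambda(\mu-1)^{2}>0$ and the bracket rewrites as $(\lambda-1)(2+2\gamma\alpha)-\lambda=\frac{2+2\gamma\alpha-\mu}{\mu-1}>0$, the strictness coming precisely from $\mu\neq 2+2\gamma\alpha$ with $\mu<2+2\gamma\alpha$ forced by (\ref{3th3.2-s1}). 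Hence $\mathcal{T}$ is a contraction, and the Banach--Picard theorem makes ${\rm Fix}(\mathcal{T})=\{u^{*}\}$ legitimate.

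Next I would differentiate $V(t)=\|u(t)-u^{*}\|^{2}$ along the trajectory. Writing (\ref{dynamic-system}) as $\frac{du}{dt}=\theta(t)(\mathcal{T}u(t)-u(t))+f(t)$ and repeating the expansion that produced (\ref{ab-th3.1-s2}) gives
\begin{equation*}
\frac{dV}{dt}=\theta(t)\big(\|\mathcal{T}u(t)-u^{*}\|^{2}-\|\mathcal{T}u(t)-u(t)\|^{2}-\|u(t)-u^{*}\|^{2}\big)+2\langle u(t)-u^{*},f(t)\rangle.
\end{equation*}
Now I use the contraction bound $\|\mathcal{T}u(t)-u^{*}\|^{2}=\|\mathcal{T}u(t)-\mathcal{T}u^{*}\|^{2}\leq\zeta^{2}\|u(t)-u^{*}\|^{2}$, discard the term $-\theta(t)\|\mathcal{T}u(t)-u(t)\|^{2}\leq 0$, and weaken $1-\zeta^{2}\geq 1-\zeta$ (valid as $0<\zeta<1$). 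Because a contraction is in particular quasi-nonexpansive and $f\in\mathbb{L}^{1}$, the boundedness estimate of Theorem \ref{3th3ad} (via Lemma \ref{V-bounded}) provides $\mathcal{M}>0$ with $\|u(t)-u^{*}\|\leq\mathcal{M}$ for all $t\geq t_{0}$, so the cross term is at most $2\mathcal{M}\|f(t)\|$. This yields the linear differential inequality
\begin{equation*}
\frac{dV}{dt}\leq-(1-\zeta)\theta(t)V(t)+2\mathcal{M}\|f(t)\|.
\end{equation*}

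Finally I would integrate exactly as in Theorem \ref{3th4ad}: multiplying by $e^{(1-\zeta)\int_{t_{0}}^{t}\theta(s)ds}$ turns the left-hand side into $\frac{d}{dt}\big(V(t)e^{(1-\zeta)\int_{t_{0}}^{t}\theta(s)ds}\big)$, and integrating from $t_{0}$ to $t$ and rearranging gives (\ref{3th4.3g-s2}) with $M_{3}=2\mathcal{M}$. For the sharpened rate, since $0<\zeta<1$ we have $e^{(1-\zeta)\int_{t_{0}}^{s}\theta(x)dx}\leq e^{\int_{t_{0}}^{s}\theta(x)dx}$, so the hypothesis $\int_{t_{0}}^{+\infty}\|f(s)\|e^{\int_{t_{0}}^{s}\theta(x)dx}ds<+\infty$ bounds the integral in (\ref{3th4.3g-s2}) uniformly in $t$; the parenthesis is then dominated by a constant, whence $\|u(t)-u^{*}\|^{2}=O(e^{-(1-\zeta)\int_{t_{0}}^{t}\theta(s)ds})$, which is (\ref{3th4.3g-s2-3}) after taking square roots, with (\textbf{A2}) ensuring $\int_{t_{0}}^{t}\theta\to+\infty$ so that the bound genuinely decays. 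The main obstacle is the first paragraph: correctly reducing the composite Lipschitz constant to the closed form $\zeta$ and confirming the sign conditions that make $0<\zeta<1$; once the contraction factor is secured, the Lyapunov estimate and the integrating-factor solution are routine and parallel Theorem \ref{3th4ad}.
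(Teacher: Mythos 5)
Your proposal is correct and follows essentially the same route as the paper: Lemma \ref{prox-lip1}(ii)--(iii) combined with the identities $(\lambda-1)(\mu-1)=1$ and $\delta=(\lambda-1)\gamma$ to produce the contraction factor $\zeta$ and the strict inequality $\zeta<1$, followed by the Lyapunov function $V(t)=\|u(t)-u^{*}\|^{2}$ and an integrating-factor argument exactly as in Theorem \ref{3th4ad}. If anything, your write-up is slightly more careful than the paper's on two small points: the paper's proof conflates $\zeta$ with $\zeta^{2}$ (it derives $\|R_{\delta B}^{\mu}R_{\gamma A}^{\lambda}x-R_{\delta B}^{\mu}R_{\gamma A}^{\lambda}y\|^{2}\leq\zeta^{2}\|x-y\|^{2}$ but labels the right-hand constant $\zeta$), a gap you close by explicitly inserting the weakening $1-\zeta^{2}\geq 1-\zeta$, and for boundedness of the trajectory you invoke quasi-nonexpansiveness via Theorem \ref{3th3ad}, which is cleaner than the paper's citation of Theorem \ref{th3.2} (whose hypotheses are stated for case (\textbf{C3}) rather than (\textbf{C2})).
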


\begin{proof}
Recall that $R_{\gamma A}^{\lambda}=(1-\lambda) \mathrm{Id}+\lambda J_{\gamma A}$ and $R_{\delta B}^{\mu}=(1-\mu) \mathrm{Id}+\mu J_{\delta B}$.
Let us first show that
\begin{eqnarray}\label{3th4.3g-s3}
(\mu-1)(2+2\delta \beta)-\mu\geq0
\end{eqnarray}
and
\begin{eqnarray}\label{3th4.3g-s4}
(\lambda-1)(2+2 \gamma \alpha)-\lambda>0.
\end{eqnarray}
In fact, the condition (\ref{3th3.2-s1}) implies (\ref{para-sec-3}).
Noting that $(\lambda-1)(\mu-1)=1$, $\delta=(\lambda-1) \gamma$ and $\mu\neq 2+2 \gamma \alpha$, we have
\begin{eqnarray*}\label{3th4.3g-s5}
(\mu-1)(2+2\delta \beta)-\mu&=&2(\mu-1)+2 \gamma \beta-\mu\nonumber\\
&=&\mu-(2-2 \gamma\beta)
\geq0,
\end{eqnarray*}
and
\begin{eqnarray*}\label{3th4.3g-s6}
(\lambda-1)(2+2 \gamma \alpha)-\lambda
&=&(\lambda-1)\left((2+2 \gamma \alpha)-\frac{\lambda}{\lambda-1}\right)\nonumber\\
&=&\frac{\mu(2+2 \gamma \alpha-\mu)}{(\mu-1)^{2}\lambda}
>0.
\end{eqnarray*}
We then learn from Lemma \ref{prox-lip1} that for $x, y \in\mathbb{H}$,
\begin{eqnarray*}\label{3th4.3g-s7}
\|R_{\delta B}^{\mu} x-R_{\delta B}^{\mu} y\|^{2} &\leq& (\mu-1)^{2}\|x-y\|^{2}-\mu[(\mu-1)(2+2\delta \beta)-\mu]\|J_{\delta B}x-J_{\delta B}y\|^{2}\nonumber\\
&\overset{(\ref{3th4.3g-s3})}{\leq}&(\mu-1)^{2}\|x-y\|^{2},
\end{eqnarray*}
and
\begin{eqnarray*}\label{3th4.3g-s8}
\|R_{\gamma A}^{\lambda} x-R_{\gamma A}^{\lambda} y\|^{2} \leq \left((\lambda-1)^{2}
-\frac{\lambda[(\lambda-1)(2+2 \gamma \alpha)-\lambda]}{1+2 \gamma \alpha+\gamma^{2} l^{2}}\right)\|x-y\|^{2}.
\end{eqnarray*}
Hence,
\begin{eqnarray*}\label{3th4.3g-s9}
\|R_{\delta B}^{\mu}R_{\gamma A}^{\lambda} x-R_{\delta B}^{\mu}R_{\gamma A}^{\lambda} y\|^{2}&\leq& (\mu-1)^{2}\|R_{\gamma A}^{\lambda} x-R_{\gamma A}^{\lambda} y\|^{2}\nonumber\\
&\leq&\zeta\|x-y\|^{2},
\end{eqnarray*}
where
\begin{eqnarray*}\label{3th4.3g-s10}
\zeta
&=&(\mu-1)^{2}\left((\lambda-1)^{2}
-\frac{\lambda[(\lambda-1)(2+2 \gamma \alpha)-\lambda]}{1+2 \gamma \alpha+\gamma^{2} l^{2}}\right)\nonumber\\
&=&1-\frac{\lambda(\mu-1)^{2}[(\lambda-1)(2+2 \gamma \alpha)-\lambda]}{1+2 \gamma \alpha+\gamma^{2} l^{2}}\nonumber\\
&\overset{(\ref{3th4.3g-s4})}{<}& 1.
\end{eqnarray*}
Next, let us verify (\ref{3th4.3g-s2}).
Consider the following auxiliary function again:
\begin{equation}\label{3th4.3g-s11}
V(t)=\|u(t)-u^{*}\|^{2}.
\end{equation}
Similar to the derivation of (\ref{3th3ad3-s3}) with $\mathcal{T}=R_{\delta B}^{\mu}R_{\gamma A}^{\lambda}$,
we have
\begin{eqnarray}\label{3th4.3g-s12}
\frac{dV}{dt}&\leq&\theta(t)\|R_{\delta B}^{\mu}R_{\gamma A}^{\lambda}u(t)-R_{\delta B}^{\mu}R_{\gamma A}^{\lambda}(u^{*})\|^{2}-\theta(t)V(t)\nonumber \\
& &-\theta(t)\|R_{\delta B}^{\mu}R_{\gamma A}^{\lambda}u(t)-u(t)\|^{2}+2\sqrt{V(t)}\|f(t)\|\nonumber \\
&\leq&-(1-\zeta)\theta(t)V(t)+2\sqrt{V(t)}\|f(t)\|.
\end{eqnarray}
Note that all conditions in Theorem \ref{th3.2} are fulfilled and so $\sqrt{V(t)}=\|u(t)-u^{*}\|$ is bounded.
Multiplying (\ref{3th4.3g-s12}) by $e^{\int^{t}_{t_{0}}(1-\zeta)\theta(s)ds}$,
and then following the same roadmap of proof as that for Theorem \ref{3th4ad} we get (\ref{3th4.3g-s2}),
and so (\ref{3th4.3g-s2-3}) occurs whenever the assumption (\textbf{A2}) holds and $\int^{+\infty}_{t_{0}}\|f(s)\|e^{\int^{s}_{t_{0}}\theta(x)dx}ds<+\infty$.
The proof is complete.
\end{proof}

\begin{corollary}\label{Lip-exprate}
Let $A : \mathbb{H}\rightarrow\mathbb{H}$ be $\alpha$-monotone and Lipschitz continuous with constant $l$ such that $l\geq|\alpha|$,
and $B : \mathbb{H}\rightrightarrows\mathbb{H}$ be
maximally $\beta$-monotone.
Suppose that the parameters $\alpha,\beta,\gamma,\delta,\lambda,\mu$ satisfy case (\textbf{C2}), $\mu\neq 2+2 \gamma \alpha$,
and that $\theta(t)$ satisfies the assumption (\textbf{A1}).
Let $u(t)$ be the unique solution of the system {\rm (\ref{dynamic-system})} with $f(t)\equiv0$,
and let ${\rm Fix}(R_{\delta B}^{\mu}R_{\gamma A}^{\lambda})=\{u^{*}\}$.
Then there exists $\Theta_{1}>0$ such that
  \begin{eqnarray*}
  \|u(t)-u^{*}\|\leq\|u_{0}-u^{*}\|e^{-\Theta_{1}(t-t_{0})}.
  \end{eqnarray*}
\end{corollary}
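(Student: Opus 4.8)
The plan is to obtain this corollary as the $f\equiv 0$ specialization of Theorem~\ref{3th4.6}, and then to upgrade the flexible integral-exponential bound it produces into a genuine uniform exponential decay by invoking assumption (\textbf{A1}). First I would check that every hypothesis of Theorem~\ref{3th4.6} is in force: $A$ is $\alpha$-monotone and Lipschitz with $l\geq|\alpha|$, $B$ is maximally $\beta$-monotone, the parameters obey (\textbf{C2}) with $\mu\neq 2+2\gamma\alpha$, and ${\rm Fix}(R_{\delta B}^{\mu}R_{\gamma A}^{\lambda})=\{u^{*}\}$ is a singleton (consistent with the Banach--Picard theorem, since the contraction constant $\zeta$ from (\ref{3th4.6-s1}) satisfies $\zeta<1$). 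Consequently estimate (\ref{3th4.3g-s2}) holds, and setting $f\equiv 0$ makes the integral term vanish, so that
$$
\|u(t)-u^{*}\|^{2}\leq e^{-(1-\zeta)\int^{t}_{t_{0}}\theta(s)ds}\,\|u_{0}-u^{*}\|^{2}.
$$
Taking square roots then gives $\|u(t)-u^{*}\|\leq\|u_{0}-u^{*}\|\,e^{-\frac{1-\zeta}{2}\int^{t}_{t_{0}}\theta(s)ds}$.

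Next I would pass from the integral $\int^{t}_{t_{0}}\theta(s)ds$ to a linear lower bound in $t$ using (\textbf{A1}). Writing $\theta_{\min}:=\inf_{t\in[t_{0},+\infty)}\theta(t)>0$, we have $\int^{t}_{t_{0}}\theta(s)ds\geq\theta_{\min}(t-t_{0})$, and hence
$$
e^{-\frac{1-\zeta}{2}\int^{t}_{t_{0}}\theta(s)ds}\leq e^{-\frac{(1-\zeta)\theta_{\min}}{2}(t-t_{0})}.
$$
Choosing $\Theta_{1}:=\frac{(1-\zeta)\theta_{\min}}{2}$, which is strictly positive because $\zeta<1$ and $\theta_{\min}>0$, yields the claimed estimate $\|u(t)-u^{*}\|\leq\|u_{0}-u^{*}\|\,e^{-\Theta_{1}(t-t_{0})}$.

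I expect essentially no obstacle in this argument: the substantive analysis---proving that $R_{\delta B}^{\mu}R_{\gamma A}^{\lambda}$ is a contraction with explicit constant $\zeta<1$ and deriving the Lyapunov estimate (\ref{3th4.3g-s2})---has already been carried out in Theorem~\ref{3th4.6}. The only genuinely new step is the elementary lower bound on $\int^{t}_{t_{0}}\theta(s)ds$ furnished by (\textbf{A1}), which is precisely what converts the (possibly non-uniform) integral rate into a clean exponential rate with an explicit exponent $\Theta_{1}$. The one point deserving a moment's care is the boundedness of $\|u(t)-u^{*}\|$ underlying (\ref{3th4.3g-s2}), but with $f\equiv 0$ this is automatic, since (\ref{3th4.3g-s12}) then reads $\tfrac{dV}{dt}\leq-(1-\zeta)\theta(t)V(t)\leq 0$, so $V$ is nonincreasing.
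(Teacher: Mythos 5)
Your proposal is correct and follows essentially the same route as the paper: the paper likewise specializes Theorem~\ref{3th4.6} (estimate (\ref{3th4.3g-s2})) to $f\equiv 0$ and sets $\Theta_{1}=\frac{1-\zeta}{2}\inf_{t\in[t_{0},+\infty)}\theta(t)$, which is exactly your constant, with positivity guaranteed by $\zeta<1$ and assumption (\textbf{A1}). Your extra observation that $V$ is nonincreasing when $f\equiv 0$ is a harmless refinement the paper leaves implicit.
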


\begin{proof}
It follows from Theorem \ref{3th4.6} that $R_{\delta B}^{\mu}R_{\gamma A}^{\lambda}$ is Lipschitz continuous with a constant $\zeta\in(0,1)$.
Set $$\Theta_{1}=\frac{1-\zeta}{2}\inf\limits_{t\in[t_{0},+\infty)}\theta(t).$$
Then $\Theta_{1}>0$ by the assumption (\textbf{A1}).
With the help of $f(t)\equiv0$ and (\ref{3th4.3g-s2}), the conclusion follows.
The proof is complete.
\end{proof}

\begin{remark}
Corollary {\rm \ref{Lip-exprate}} can be viewed as a continuous analog to \cite[Theorem 4.8]{Dao-Phan2018} in which a global linear convergence rate of the adaptive DR algorithm (\ref{iterative2}) was established  under  Lipschitz assumption.
\end{remark}

In next theorem we take into account the condition $f\in\mathbb{L}^{1}([t_{0},b])$ for all $b\geq t_{0}$ instead of $f\in\mathbb{L}^{1}([t_{0},+\infty))$,
and impose a constraint on $\theta$.
Note that, with such modifications, Theorem \ref{th3.1} still holds according to the Cauchy-Lipschitz theorem.

\begin{theorem}\label{3th4.7}
Let $A : \mathbb{H}\rightarrow\mathbb{H}$ be $\alpha$-monotone and Lipschitz continuous with constant $l$ which satisfies $l\geq|\alpha|$,
and $B : \mathbb{H}\rightrightarrows\mathbb{H}$ be
maximally $\beta$-monotone.
Suppose that the parameters $\alpha,\beta,\gamma,\delta,\lambda,\mu$ satisfy case (\textbf{C2}) and $\mu\neq 2+2 \gamma \alpha$.
Let $u(t)$ be the trajectory of the system {\rm (\ref{dynamic-system})}, and ${\rm Fix}(R_{\delta B}^{\mu}R_{\gamma A}^{\lambda})=\{u^{*}\}$.
Suppose that $(1-\zeta)\theta(t)>1$ for any $t\in[t_{0},+\infty)$,
and that $f\in\mathbb{L}^{1}([t_{0},b])$ for all $b\geq t_{0}$.
Then
\begin{eqnarray*}\label{3th4.3g-s2-4}
  &&\|u(t)-u^{*}\|^{2}\nonumber\\
  &\leq&
  e^{-\int^{t}_{t_{0}}(1-\zeta)\theta(s)-1ds}\left(
  \int^{t}_{t_{0}}\|f(s)\|^{2}e^{\int^{s}_{t_{0}}(1-\zeta)\theta(x)-1dx}ds+\|u_{0}-u^{*}\|^{2}\right),
\end{eqnarray*}
where $\zeta$ is defined in {\rm(\ref{3th4.6-s1})}.
Furthermore, if $\int^{+\infty}_{t_{0}}[(1-\zeta)\theta(s)-1]ds=+\infty$ and\\ $\int^{+\infty}_{t_{0}}\|f(s)\|^2e^{\int^{s}_{t_{0}}(1-\zeta)\theta(x)-1dx}ds<+\infty,$
  then
  \begin{equation*}
  \|u(t)-u^{*}\|=O\left(e^{-\frac{1}{2}\int^{t}_{t_{0}}(1-\zeta)\theta(s)-1ds}\right).
  \end{equation*}
\end{theorem}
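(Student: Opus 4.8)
The plan is to reuse the contraction estimate already produced in the proof of Theorem~\ref{3th4.6}. Under the present hypotheses ($A$ single-valued, $\alpha$-monotone and Lipschitz with $l\ge|\alpha|$, case (\textbf{C2}), and $\mu\neq 2+2\gamma\alpha$) that proof shows $R_{\delta B}^{\mu}R_{\gamma A}^{\lambda}$ is Lipschitz continuous with constant $\zeta<1$ given by (\ref{3th4.6-s1}); in particular it is a contraction, so its fixed point $u^{*}$ is unique, and a unique trajectory $u(t)$ on $[t_{0},+\infty)$ exists by the Cauchy--Lipschitz theorem even under the weakened local integrability $f\in\mathbb{L}^{1}([t_{0},b])$ (as noted before the statement). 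Setting $V(t)=\|u(t)-u^{*}\|^{2}$ and repeating verbatim the computation that led to (\ref{3th4.3g-s12}), I obtain, for almost every $t\ge t_{0}$, the differential inequality $\frac{dV}{dt}\le-(1-\zeta)\theta(t)V(t)+2\sqrt{V(t)}\,\|f(t)\|$, where I drop the nonnegative term $-\theta(t)\|R_{\delta B}^{\mu}R_{\gamma A}^{\lambda}u(t)-u(t)\|^{2}$ and use $\|R_{\delta B}^{\mu}R_{\gamma A}^{\lambda}u(t)-R_{\delta B}^{\mu}R_{\gamma A}^{\lambda}u^{*}\|^{2}\le\zeta V(t)$.

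The decisive new step is to handle the perturbation term differently from Theorem~\ref{3th4.6}. Rather than retaining the factor $\sqrt{V(t)}$ (which there was controlled through boundedness of the trajectory and forced $f\in\mathbb{L}^{1}([t_{0},+\infty))$), I would apply Young's inequality $2\sqrt{V(t)}\,\|f(t)\|\le V(t)+\|f(t)\|^{2}$ to get $\frac{dV}{dt}\le-\bigl[(1-\zeta)\theta(t)-1\bigr]V(t)+\|f(t)\|^{2}$. This is precisely where the hypothesis $(1-\zeta)\theta(t)>1$ is used: it makes $g(t):=(1-\zeta)\theta(t)-1$ strictly positive, so the inequality is genuinely dissipative, and now only the local integrability of $\|f\|^{2}$ on bounded intervals is required to run the estimate. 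Writing $G(t)=\int_{t_{0}}^{t}g(s)\,ds$ and multiplying by the integrating factor $e^{G(t)}$ gives $\frac{d}{dt}\bigl(V(t)e^{G(t)}\bigr)\le\|f(t)\|^{2}e^{G(t)}$; integrating from $t_{0}$ to $t$ and rearranging yields the claimed bound with $V(t_{0})=\|u_{0}-u^{*}\|^{2}$. For the asymptotic conclusion, under $\int_{t_{0}}^{+\infty}g(s)\,ds=+\infty$ together with $\int_{t_{0}}^{+\infty}\|f(s)\|^{2}e^{\int_{t_{0}}^{s}g(x)\,dx}\,ds<+\infty$ the parenthesised factor is bounded by a finite constant $C$, so $V(t)\le C\,e^{-G(t)}$ and taking square roots gives $\|u(t)-u^{*}\|=O\bigl(e^{-\frac12 G(t)}\bigr)$.

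I do not anticipate a serious obstacle: once (\ref{3th4.3g-s12}) is available the remainder is a standard Gronwall/integrating-factor argument. The two points that deserve care are (i) choosing the Young splitting with \emph{unit} weight on $V(t)$, so that the residual damping is exactly $(1-\zeta)\theta(t)-1$ and matches the exponent appearing in the statement, and (ii) the measure-theoretic justification that $V$ and $V e^{G}$ are locally absolutely continuous (composition of the absolutely continuous $u$ with a locally Lipschitz map, times a locally Lipschitz factor), so that the fundamental theorem of calculus legitimately converts the almost-everywhere differential inequality into the integrated bound.
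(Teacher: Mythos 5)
Your proposal matches the paper's own proof essentially step for step: both obtain the contraction property and the unique fixed point from Theorem~\ref{3th4.6}, reuse the differential inequality (\ref{3th4.3g-s12}) for $V(t)=\|u(t)-u^{*}\|^{2}$, apply Young's inequality $2\sqrt{V(t)}\,\|f(t)\|\le V(t)+\|f(t)\|^{2}$ to convert the perturbation term into the damping coefficient $(1-\zeta)\theta(t)-1$, and finish with the same integrating-factor argument used for (\ref{3th4.3g-s2}) and (\ref{3th4.3g-s2-3}). There is no gap; your additional remarks on local absolute continuity of $V$ and on why only local integrability of $f$ is needed are correct refinements of points the paper leaves implicit.
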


\begin{proof}
It follows from Theorem \ref{3th4.6} (i) that
$R_{\delta B}^{\mu}R_{\gamma A}^{\lambda}$ is Lipschitz continuous with $\zeta$ defined in (\ref{3th4.6-s1}),
and has a unique fixed point $\{u^{*}\}={\rm Fix}(R_{\delta B}^{\mu}R_{\gamma A}^{\lambda})$.
Consider the following auxiliary function again:
\begin{equation*}
V(t)=\|u(t)-u^{*}\|^{2}.
\end{equation*}
  It turns out from (\ref{3th4.3g-s11}) and (\ref{3th4.3g-s12}) that
  \begin{eqnarray*}\label{3th4.3g-s13}
  \frac{dV}{dt}&\leq&-\frac{\theta(t)}{\kappa^{2}}V(t)+2\sqrt{V(t)}\|f(t)\|\nonumber\\
  &\leq&-[(1-\zeta)\theta(t)-1]V(t)+\|f(t)\|^{2}.
  \end{eqnarray*}
  Similar to the proofs of (\ref{3th4.3g-s2}) and (\ref{3th4.3g-s2-3}), noticing that $(1-\zeta)\theta(t)>1$,
  we obtain the desired results. The proof is complete.
\end{proof}

\begin{remark}The functions
$$\theta_{1}(t)=\frac{2t+1}{2t(1-\zeta)}\quad\text{and}\quad f_{1}(t)=\left(\frac{1}{t}e^{-\int^{t}_{t_{0}}\frac{1}{s}ds}\right)^{\frac{1}{2}}=\frac{\sqrt{t_{0}}}{t},\quad t\geq t_{0}>0,$$
verify the conditions $(1-\zeta)\theta_{1}(t)-1>0$, $f_{1}\in\mathbb{L}^{1}([t_{0},b])$ for all $b\geq t_{0}$, and\\ $\int^{+\infty}_{t_{0}}\|f_{1}(s)\|^{2}e^{\int^{s}_{t_{0}}(1-\zeta)\theta_{1}(x)-1dx}ds<+\infty$,
while $\int^{+\infty}_{t_{0}}\|f_{1}(s)\|e^{(1-\zeta)\int^{s}_{t_{0}}\theta_{1}(x)dx}ds<+\infty$ and $f_{1}\in\mathbb{L}^{1}([t_{0},+\infty))$ fail.
On the other hand, the functions
$$\theta_{2}(t)=\frac{1}{1-\zeta}\quad\text{and}\quad f_{2}(t)=\frac{1}{t^{2}}e^{-(t-t_{0})},$$
cater for the conditions $\int^{+\infty}_{t_{0}}\|f_{2}(s)\|e^{(1-\zeta)\int^{s}_{t_{0}}\theta_{2}(x)dx}ds<+\infty$, $f_{2}\in\mathbb{L}^{1}([t_{0},+\infty))$ and the assumption (\textbf{A2}),
while $(1-\zeta)\theta_{2}(t)-1>0$ and $\int^{t}_{t_{0}}(1-\zeta)\theta_{2}(s)-1ds=+\infty$ are not fulfilled.
This indicates that the assumptions on $\theta$ and $f$ in Theorem \ref{3th4.6} are independent of those in Theorem \ref{3th4.7}.
\end{remark}

\begin{remark} As pointed out in \cite[Remark 4.15]{Dao-Phan2018} that the sum of $\alpha$- and $\beta$-monotone operators $A$ and $B$ with $\alpha+\beta\ge 0$ can be transformed into the sum of two new monotone operators $\tilde A$ and $\tilde B$ with
$$\tilde A=A+\frac{\beta-\alpha}{2}I \text{ and } \tilde B=B+\frac{\beta-\alpha}{2}I.$$
Then one can propose dynamical systems for the problem associated with monotone operators $\tilde A$ and $\tilde B$.  Here, our main goal is to design  the dynamical system approach in which one operates only original data. This  might be especially helpful when the resolvents are given as black boxes, in which case one just needs to adjust the approach using corresponding parameters.
\end{remark}

\section{Applications to structural minimization problems}\label{3aplication}
In this section, we apply the results obtained for the  adaptive Douglas-Rachford dynamical system (\ref{dynamic-system}) to solve the "strongly+weakly" convex minimization problem  \cite{K-Guo2018,c29Guo,ZM-OP}:
\begin{equation}
   \label{SMP2}
    \min\limits_{x\in \mathbb{H}}\phi(x)+\varphi(x),
\end{equation}
where $\phi$, $\varphi$ : $\mathbb{H}\rightarrow \mathbb{R}\cup \{+\infty\}$
be two proper and closed functions,
one of which is strongly convex while the other one is weakly convex. Let us recall some necessary  concepts and results in convex analysis. Let $\text{dom}\,h$ denote the effective domain of
a proper function $h:\mathbb{H}\to \mathbb{R}\cup +\infty$, i.e., $\text{dom}\,h:=\{x\in\mathbb{H}\mid h(x)<+\infty\}\ne\emptyset$.
Let $\operatorname{Prox}_{\tau h}:\mathbb{H}\rightrightarrows\mathbb{H}$ denote the proximity operator of $h$, i.e.,
$$
\operatorname{Prox}_{\tau h}(x) :=\underset{z \in \mathbb{H}}{\operatorname{argmin}}\left(h(z)+\frac{1}{2 \tau}\|z-x\|^{2}\right),\quad \forall x\in\mathbb{H}.
$$
The function $h$ is said to be $\alpha$-convex (see, e.g., \cite[Definition 4.1]{Vial1893}) for some $\alpha\in\mathbb{R}$,
if $\forall x,y\in\text{dom}\,h$, $\forall \tau\in(0,1)$,
$$
h((1-\tau) x+\tau y)+\frac{\alpha}{2} \tau(1-\tau)\|x-y\|^{2} \leq(1-\tau) h(x)+\tau h(y).
$$
We say that $h$ is convex, strongly convex and weakly convex if $\alpha= 0$, $\alpha > 0$ and  $\alpha < 0$, respectively.
We use $\hat{\partial} h$ denote the Fr\'{e}chet subdifferential of $h$, which is defined by
$$
\hat{\partial} h(x)=\left\{z\in \mathbb{H}\mid\liminf\limits_{y \rightarrow x} \frac{h(y)-h(x)-\left\langle z, y-x\right\rangle}{\|y-x\|} \geq 0\right\}.
$$
Notice that
if $h$ is convex, then
$$
\hat{\partial} h(x)=\partial h(x):=\left\{v \in \mathcal{H}\mid h(y) \geq h(x)+\langle v, y-x \rangle \quad \forall y\in \text{dom}\,h \right\},
$$
see, e.g., \cite[Theorem 1.93]{{Mordukhovich}}. The following lemma comes from \cite[Lemma 5.2]{Dao-Phan2018}.
\begin{lemma}\label{3le5.1}
Let $h : \mathbb{H}\rightarrow\mathbb{R}\cup \{+\infty\}$ be proper, closed, and
$\alpha-$convex. Suppose $\gamma\in\mathbb{R}_{++}$ and $1+\gamma\alpha>0$. Then the following conclusions hold:
\begin{itemize}
  \item [(i)]$\hat{\partial} h$ is maximally $\alpha$-monotone.
  \item [(ii)]$\operatorname{Prox}_{\gamma h}=J_{\gamma \hat{\partial} h}$ is single-valued and has full domain.
\end{itemize}
\end{lemma}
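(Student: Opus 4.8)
The plan is to reduce everything to the classical convex case by absorbing the quadratic term. First I would set $g := h - \frac{\alpha}{2}\|\cdot\|^{2}$ and observe that the defining inequality of $\alpha$-convexity for $h$ is \emph{equivalent} to the ordinary convexity inequality for $g$; since $h$ is proper and closed and $\frac{\alpha}{2}\|\cdot\|^{2}$ is finite and continuous, $g$ inherits properness and closedness. Thus $g$ is a proper, closed, convex function, and all the structure of $h$ is encoded in $g$ plus a smooth quadratic.

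For part (i), the key point is that for a proper closed convex function the Fr\'echet subdifferential coincides with the convex (Moreau--Rockafellar) subdifferential, so $\hat\partial g = \partial g$, and by Rockafellar's theorem $\partial g$ is maximally monotone. Next I would invoke the Fr\'echet-subdifferential sum rule for a smooth perturbation: since $x\mapsto \frac{\alpha}{2}\|x\|^{2}$ is everywhere differentiable with gradient $\alpha x$, one has $\hat\partial h(x) = \hat\partial g(x) + \alpha x = \partial g(x) + \alpha x$, that is $\hat\partial h - \alpha I = \partial g$. Because $\partial g$ is maximally monotone, the characterization recalled after Definition \ref{a-monotonicity} ($A$ is maximally $\alpha$-monotone if and only if $A-\alpha I$ is maximally monotone) immediately yields that $\hat\partial h$ is maximally $\alpha$-monotone, giving (i).

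For part (ii), I would establish the identity $\operatorname{Prox}_{\gamma h} = J_{\gamma\hat\partial h}$ directly through first-order optimality. The objective $z\mapsto h(z) + \frac{1}{2\gamma}\|z-x\|^{2}$ is $\bigl(\alpha + \tfrac{1}{\gamma}\bigr)$-convex, and $1+\gamma\alpha>0$ means $\alpha + \tfrac1\gamma>0$, so it is strongly convex and coercive; hence it has a \emph{unique} minimizer for every $x$, which shows $\operatorname{Prox}_{\gamma h}$ is single-valued with full domain. Applying the Fermat rule and the same smooth sum rule at that minimizer $z$ gives $0 \in \hat\partial h(z) + \frac{1}{\gamma}(z-x)$, equivalently $x \in (I + \gamma\hat\partial h)(z)$, that is $z = J_{\gamma\hat\partial h}(x)$; single-valuedness of the resolvent, available from (i) together with Lemma \ref{prox-lip1}(i), then forces equality of the two operators on all of $\mathbb{H}$. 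Alternatively, full domain of $J_{\gamma\hat\partial h}$ can be read off from Minty's theorem applied to the maximally strongly monotone operator $\gamma(\hat\partial h - \alpha I) + (1+\gamma\alpha)I = I + \gamma\hat\partial h$, which is therefore surjective.

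The main obstacle I anticipate is the careful handling of the Fr\'echet subdifferential for the possibly nonconvex $h$: one must justify the \emph{exact} sum rule $\hat\partial\bigl(g + \tfrac{\alpha}{2}\|\cdot\|^{2}\bigr) = \hat\partial g + \alpha I$, which is valid precisely because the added term is $C^{1}$, together with the coincidence $\hat\partial g = \partial g$ on the convex function $g$, rather than settling for a mere inclusion. Once these two subdifferential facts are secured, both parts are short; the remainder is routine bookkeeping with strong convexity and the definition of the resolvent.
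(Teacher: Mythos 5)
Your proof is correct. A point of comparison worth noting: the paper does not actually prove Lemma \ref{3le5.1} --- it is imported verbatim from \cite[Lemma 5.2]{Dao-Phan2018} --- so your argument serves as a genuine, self-contained replacement for that citation, and it follows the standard route for such ``tilted convexity'' statements. The individual steps all check out: the tilt $g = h - \frac{\alpha}{2}\|\cdot\|^{2}$ converts $\alpha$-convexity of $h$ into convexity of $g$ (via the identity (\ref{a-identity}) applied to the quadratic term), and $g$ inherits properness and closedness since the quadratic is finite and continuous; $\hat{\partial} g=\partial g$ is maximally monotone by Rockafellar's theorem; the exact Fr\'echet sum rule with a Fr\'echet-differentiable summand gives $\hat{\partial} h=\partial g+\alpha I$, and the equivalence recorded after Definition \ref{a-monotonicity} then yields (i). For (ii), the prox objective is $(\alpha+1/\gamma)$-convex with $\alpha+1/\gamma=(1+\gamma\alpha)/\gamma>0$, hence proper, closed, strongly convex, and so has a unique minimizer for every $x$, making $\operatorname{Prox}_{\gamma h}$ single-valued with full domain; Fermat's rule plus the same sum rule give $\operatorname{Prox}_{\gamma h}(x)\in J_{\gamma \hat{\partial} h}(x)$, and your device of invoking the at-most-single-valuedness of $J_{\gamma \hat{\partial} h}$ (available from (i) together with Lemma \ref{prox-lip1}(i)) to upgrade this inclusion to an equality of full-domain single-valued operators is clean: it sidesteps having to argue separately that a Fr\'echet-critical point of the strongly convex objective is a global minimizer. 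The alternative Minty-surjectivity argument you sketch for the full domain of the resolvent is equally valid.
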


We are now in position to deal with the "strongly+weakly" convex minimization problem (\ref{SMP2}). To do so, we propose
the following dynamical system:
\begin{equation}\label{dynamic-system5}
 \left\{
             \begin{array}{lr}
             \frac{du}{dt}+\theta(t)\left[u(t)-\mathrm{R}_{2}\mathrm{R}_{1}(u(t))\right]=f(t), &  \\
             u(t_{0})=u_{0}\in\mathbb{H},&
             \end{array}
   \right.
\end{equation}
where
\begin{equation*}
\begin{aligned}
&\mathrm{R}_{1} :=(1-\lambda) \mathrm{Id}+\lambda P_{1},\quad
\mathrm{R}_{2} :=(1-\mu) \mathrm{Id}+\mu P_{2},&\\
&P_{1}:=\operatorname{Prox}_{\gamma \phi} \quad\text{and}\quad P_{2}:=\operatorname{Prox}_{\delta \varphi}.&
\end{aligned}
\end{equation*}
Clearly, the system (\ref{dynamic-system5}) is a special case of the adaptive Douglas-Rachford dynamical system  (\ref{dynamic-system}) (by seting $A=\hat{\partial} \phi$ and $B=\hat{\partial} \varphi$). We can learn from Theorem \ref{th3.1} that for each initial point $u_{0}$,
there exists a unique absolutely continuous trajectory $u(t)$
of the system {\rm(\ref{dynamic-system5})} in the global time interval
$[t_{0},+\infty)$.
Based on such a fact, Lemma \ref{3le5.1} allows us to get the parallel results from the previous sections.
Note that ${\rm Fix}(\mathrm{R}_{2}\mathrm{R}_{1})=zer(\hat{\partial} \phi+\hat{\partial} \varphi)\subseteq\arg\min(\phi+\varphi)$
by \cite[Lemma 5.3]{Dao-Phan2018}, and that $\phi+\varphi$ is strongly convex in the case (\textbf{C1}) due to $\alpha+\beta>0$, which leads to the fact that
the problem (\ref{SMP2}) has a unique solution.

\begin{theorem}\label{3th5.1}
Let $\phi : \mathbb{H}\rightarrow\mathbb{R}\cup \{+\infty\}$ and
$\varphi : \mathbb{H}\rightarrow\mathbb{R}\cup \{+\infty\}$ be proper, closed, and
respectively  $\alpha$- and $\beta$-convex.
Suppose that the parameters $\alpha,\beta,\gamma,\delta,\lambda,\mu$ satisfy (\textbf{C1}),
and that the assumption (\textbf{A1}) holds.
Let $u(t)$ be the trajectory of the system {\rm (\ref{dynamic-system5})}.
Then, for any $u^{*}\in{\rm Fix}(\mathrm{R}_{2}\mathrm{R}_{1})$, the following
statements are true:
\begin{itemize}
\item[(i)]$\int^{+\infty}_{t_{0}}\|\mathrm{R}_{2}\mathrm{R}_{1}u(s)-u(s)\|^{2}ds<+\infty$.

\item[(ii)]$\lim\limits_{t\rightarrow+\infty}\|\mathrm{R}_{2}\mathrm{R}_{1}u(t)-u(t)\|=0$.

\item[(iii)] If $\mathrm{R}_{2}\mathrm{R}_{1}$ satisfies the demiclosedness principle, then
there exists $\hat{u}\in \text{Fix}(\mathrm{R}_{2}\mathrm{R}_{1})$ such that
$w-\lim\limits_{t\rightarrow+\infty}u(t)=\hat{u}$.

\item[(iv)]
$\int^{+\infty}_{t_{0}}\left\|\alpha\left(P_{1}u(s)-P_{1} u^{*}\right)
+\beta\left(P_{2} \mathrm{R}_{1} u(s)-P_{2}\mathrm{R}_{1}u^{*} \right)\right\|^{2}ds<+\infty$.
\item[(v)]
$\lim\limits_{t\rightarrow+\infty}\left\|\alpha\left(P_{1}u(t)-P_{1} u^{*}\right)
+\beta\left(P_{2} \mathrm{R}_{1} u(t)-P_{2} \mathrm{R}_{1}u^{*} \right)\right\|=0$.
\item[(vi)]
$\lim\limits_{t\rightarrow+\infty}P_{1}u(t)=P_{1} u^{*}=\lim\limits_{t\rightarrow+\infty}P_{2} \mathrm{R}_{1} u(t)=P_{2} \mathrm{R}_{1}u^{*}=\text{zer}(A+B)$.

\end{itemize}
\end{theorem}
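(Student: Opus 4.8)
The plan is to recognize that the dynamical system (\ref{dynamic-system5}) is merely a convex-analytic instance of the adaptive Douglas-Rachford dynamical system (\ref{dynamic-system}), and then to transfer all six conclusions directly from Theorem \ref{3th3.4}. Concretely, I would set $A:=\hat{\partial}\phi$ and $B:=\hat{\partial}\varphi$ and verify that this substitution turns (\ref{dynamic-system5}) into (\ref{dynamic-system}) with every hypothesis of Theorem \ref{3th3.4} in force.

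First I would justify the hypotheses through Lemma \ref{3le5.1}. Since the parameters satisfy (\textbf{C1}), Proposition \ref{3pro-c12} guarantees that (\ref{cond3.1}) holds, and in particular $\min\{1+\gamma\alpha,\,1+\delta\beta\}>0$. Thus the standing assumptions $\gamma,\delta\in\mathbb{R}_{++}$ with $1+\gamma\alpha>0$ and $1+\delta\beta>0$ of Lemma \ref{3le5.1} are met for $\phi$ and $\varphi$ respectively. Part (i) of that lemma then yields that $\hat{\partial}\phi$ is maximally $\alpha$-monotone and $\hat{\partial}\varphi$ is maximally $\beta$-monotone, while part (ii) gives $P_{1}=\operatorname{Prox}_{\gamma\phi}=J_{\gamma A}$ and $P_{2}=\operatorname{Prox}_{\delta\varphi}=J_{\delta B}$, both single-valued with full domain. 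Consequently $\mathrm{R}_{1}=R_{\gamma A}^{\lambda}$, $\mathrm{R}_{2}=R_{\delta B}^{\mu}$, and so $\mathrm{R}_{2}\mathrm{R}_{1}=R_{\delta B}^{\mu}R_{\gamma A}^{\lambda}$.

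With these identifications the system (\ref{dynamic-system5}) is exactly (\ref{dynamic-system}), the operators $A$ and $B$ are maximally $\alpha$- and $\beta$-monotone, (\textbf{C1}) and (\textbf{A1}) hold, and ${\rm Fix}(\mathrm{R}_{2}\mathrm{R}_{1})={\rm Fix}(R_{\delta B}^{\mu}R_{\gamma A}^{\lambda})=\text{zer}(\hat{\partial}\phi+\hat{\partial}\varphi)\neq\emptyset$, the nonemptiness following from unique solvability in the strongly monotone case $\alpha+\beta>0$. Applying Theorem \ref{3th3.4} then produces (i)--(vi) verbatim, where in (iv)--(vi) one simply rewrites $J_{\gamma A}=P_{1}$ and $J_{\delta B}R_{\gamma A}^{\lambda}=P_{2}\mathrm{R}_{1}$. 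I expect no genuine analytic obstacle here: the entire argument is a reduction, and the only point requiring care is the faithful verification through Lemma \ref{3le5.1} that the Fr\'echet subdifferentials inherit maximal monotonicity and that the proximity operators are single-valued with full domain, so that every quantity appearing in the conclusions of Theorem \ref{3th3.4} is well defined.
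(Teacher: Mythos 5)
Your proposal is correct and is essentially the paper's own argument: the paper also treats (\ref{dynamic-system5}) as the special case of (\ref{dynamic-system}) with $A=\hat{\partial}\phi$, $B=\hat{\partial}\varphi$, invokes Lemma \ref{3le5.1} for maximal $\alpha$- and $\beta$-monotonicity and the identification $P_{1}=J_{\gamma A}$, $P_{2}=J_{\delta B}$, and then reads off (i)--(vi) from Theorem \ref{3th3.4}, with nonemptiness of ${\rm Fix}(\mathrm{R}_{2}\mathrm{R}_{1})$ coming from strong monotonicity/convexity when $\alpha+\beta>0$ under (\textbf{C1}).
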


\begin{remark}

Some  convergence results corresponding to the case $\theta(t)\equiv\bar{\Theta}>0$, $f(t)\equiv0$ and $\mathbb{H}=\mathbb{R}^{n}$ have been investigated in  \cite[Theorem 3.3 and Theorem 3.7]{ZM-OP}, which are covered by Theorem \ref{3th5.1}.
\end{remark}

\begin{theorem}\label{3th5.2}
Let $\phi : \mathbb{H}\rightarrow\mathbb{R}\cup \{+\infty\}$ and
$\varphi : \mathbb{H}\rightarrow\mathbb{R}\cup \{+\infty\}$ be proper, closed, and
respectively $\alpha$- and $\beta$-convex.
Suppose that the parameters $\alpha,\beta,\gamma,\delta,\lambda,\mu$ satisfy (\textbf{C2}),
$zer(\hat{\partial} \phi+\hat{\partial} \varphi)\neq\emptyset$,
and that the the assumption (\textbf{A1}) holds.
Let $u(t)$ be the trajectory of the system {\rm (\ref{dynamic-system5})}.
Then, for any $u^{*}\in{\rm Fix}(\mathrm{R}_{2}\mathrm{R}_{1})$, the following
statements are true:
\begin{itemize}
\item[(i)]
$\int^{+\infty}_{t_{0}}\|\mathrm{R}_{2}\mathrm{R}_{1}u(s)-u(s)\|^{2}ds<+\infty$.

\item[(ii)]$\lim\limits_{t\rightarrow+\infty}\|\mathrm{R}_{2}\mathrm{R}_{1}u(t)-u(t)\|=0$.

\item[(iii)]
There exists $\hat{u}\in \text{Fix}(\mathrm{R}_{2}\mathrm{R}_{1})$ such that
$w-\lim\limits_{t\rightarrow+\infty}u(t)=\hat{u}$.

\item[(iv)]If $\alpha+\beta>0$, then\\
$\int^{+\infty}_{t_{0}}\left\|w_{1}\left(P_{1}u(s)-P_{1} u^{*}\right)
+w_{2}\left(P_{2} \mathrm{R}_{1} u(s)-P_{2} \mathrm{R}_{1}u^{*} \right)\right\|^{2}ds<+\infty$,\\
$\lim\limits_{t\rightarrow+\infty}\left\|w_{1}\left(P_{1}u(t)-P_{1} u^{*}\right)
+w_{2}\left(P_{2} \mathrm{R}_{1} u(t)-P_{2} \mathrm{R}_{1}u^{*} \right)\right\|=0$, and\\
$\lim\limits_{t\rightarrow+\infty}P_{1}u(t)=P_{1} u^{*}=\lim\limits_{t\rightarrow+\infty}P_{2} \mathrm{R}_{1} u(t)=P_{2} \mathrm{R}_{1}u^{*}=\text{zer}(A+B)$.

\end{itemize}
\end{theorem}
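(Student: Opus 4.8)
The plan is to recognize Theorem~\ref{3th5.2} as the specialization of Theorem~\ref{3th3.5-2} to subdifferential operators, so that no new dynamical analysis is required beyond translating the monotone-operator language into convex-analytic language. First I would set $A:=\hat{\partial}\phi$ and $B:=\hat{\partial}\varphi$. Since $\phi$ and $\varphi$ are proper, closed, and respectively $\alpha$- and $\beta$-convex, and since case (\textbf{C2}) forces (\ref{cond3.1}) by Proposition~\ref{3pro-c12}---in particular $\min\{1+\gamma\alpha,\,1+\delta\beta\}>0$---Lemma~\ref{3le5.1} applies: $A$ and $B$ are maximally $\alpha$- and $\beta$-monotone, and $J_{\gamma A}=\operatorname{Prox}_{\gamma\phi}=P_{1}$, $J_{\delta B}=\operatorname{Prox}_{\delta\varphi}=P_{2}$ are single-valued with full domain. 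Consequently $R_{\gamma A}^{\lambda}=\mathrm{R}_{1}$ and $R_{\delta B}^{\mu}=\mathrm{R}_{2}$, whence $R_{\delta B}^{\mu}R_{\gamma A}^{\lambda}=\mathrm{R}_{2}\mathrm{R}_{1}$, and the dynamical system (\ref{dynamic-system5}) is literally the adaptive Douglas--Rachford system (\ref{dynamic-system}) for this choice of $A,B$.

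Next I would verify that every hypothesis of Theorem~\ref{3th3.5-2} is in force: the parameters satisfy (\textbf{C2}) by assumption, the assumption (\textbf{A1}) is imposed, and the nonemptiness requirement $\text{zer}(A+B)\neq\emptyset$ is exactly $\text{zer}(\hat{\partial}\phi+\hat{\partial}\varphi)\neq\emptyset$, which is assumed. Invoking Theorem~\ref{3th3.5-2} then yields assertions (i)--(iv) after substituting $R_{\delta B}^{\mu}R_{\gamma A}^{\lambda}=\mathrm{R}_{2}\mathrm{R}_{1}$, $J_{\gamma A}=P_{1}$ and $J_{\delta B}R_{\gamma A}^{\lambda}=P_{2}\mathrm{R}_{1}$; the weights $w_{1}=\epsilon\mu(2+2\gamma\alpha-\mu)$ and $w_{2}=\epsilon\mu(\mu-(2-2\gamma\beta))$ are unchanged, and when $\alpha+\beta>0$ one has $\text{zer}(A+B)=\text{zer}(\hat{\partial}\phi+\hat{\partial}\varphi)\subseteq\arg\min(\phi+\varphi)$ via \cite[Lemma 5.3]{Dao-Phan2018}. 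I note that part (iii) needs no separate demiclosedness hypothesis, because in case (\textbf{C2}) the operator $R_{\delta B}^{\mu}R_{\gamma A}^{\lambda}=\mathrm{R}_{2}\mathrm{R}_{1}$ is nonexpansive and hence satisfies the demiclosedness principle automatically by Remark~\ref{demiclose}.

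I do not anticipate a genuine obstacle: the entire content is the identification of resolvents with proximity operators supplied by Lemma~\ref{3le5.1}, and the only point demanding care is confirming the positivity conditions $1+\gamma\alpha>0$ and $1+\delta\beta>0$ that license that identification---these are delivered automatically by (\textbf{C2}) through Proposition~\ref{3pro-c12}. Once the two systems are matched, the conclusion is an immediate transfer from Theorem~\ref{3th3.5-2}.
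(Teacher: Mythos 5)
Your proposal is correct and follows essentially the same route as the paper: the paper also obtains Theorem~\ref{3th5.2} by setting $A=\hat{\partial}\phi$, $B=\hat{\partial}\varphi$, invoking Lemma~\ref{3le5.1} (licensed because (\textbf{C2}) implies (\ref{cond3.1}) via Proposition~\ref{3pro-c12}) to identify $J_{\gamma A}=P_{1}$, $J_{\delta B}=P_{2}$, and then transferring Theorem~\ref{3th3.5-2} verbatim, with nonexpansiveness of $\mathrm{R}_{2}\mathrm{R}_{1}$ in case (\textbf{C2}) supplying the demiclosedness needed for (iii). Your treatment is, if anything, more explicit than the paper's (which leaves this specialization implicit), and it contains no gaps.
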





\begin{remark}\label{apli-remark}
Guo et al. \cite{c29Guo} proposed  a DR algorithm solving the problem (\ref{SMP2}) in an Euclidean space and discussed its convergence.
The algorithm proposed by Guo et al. \cite{c29Guo} can be regard as a special case of a discretization version of the system (\ref{dynamic-system5}).
The convergence results in \cite{c29Guo} require that the strong convexity of the objective function strictly outweighs the weak counterpart, that is, $\alpha+\beta>0$.
Convergence of the same DR algorithm, only for the case $\alpha+\beta=0$, has also been considered in \cite{K-Guo2018} under the condition that one function is strongly convex with Lipschitz continuous gradient.
In contrast, we assume $\alpha+\beta\geq0$, and the convergence is still guaranteed without any differentiability assumption; see Theorem \ref{3th5.1} and Theorem \ref{3th5.2}.
\end{remark}

\begin{theorem}\label{3th5.3}
Let $\phi : \mathbb{H}\rightarrow\mathbb{R}\cup \{+\infty\}$ and
$\varphi : \mathbb{H}\rightarrow\mathbb{R}\cup \{+\infty\}$ be proper, closed, and
respectively $\alpha$- and $\beta$-convex.
Suppose that the parameters $\alpha,\beta,\gamma,\delta,\lambda,\mu$ satisfy (\textbf{C2}),
$zer(\hat{\partial} \phi+\hat{\partial} \varphi)\neq\emptyset$,
and that the assumption (\textbf{A1}) holds.
Let $u(t)$ be the trajectory of the system {\rm (\ref{dynamic-system5})}.
If $\theta$ and $f$ are subject to
\begin{equation*}\label{3th5.3-2}
 \int^{+\infty}_{t_{0}}\theta(s)\int^{+\infty}_{s}\|f(\tau)\|d\tau ds<+\infty,
\end{equation*}
then
$\|\mathrm{R}_{2}\mathrm{R}_{1}u(t)-u(t)\|=O\left(\frac{1}{\sqrt{t}}\right).$
In particular,
if $f(t)\equiv0$,
then the convergence rate above can be improved to $o(\frac{1}{\sqrt{t}})$.
\end{theorem}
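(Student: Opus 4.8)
The plan is to recognize that the system (\ref{dynamic-system5}) is simply the adaptive Douglas--Rachford dynamical system (\ref{dynamic-system}) specialized to the subdifferential operators $A=\hat{\partial}\phi$ and $B=\hat{\partial}\varphi$, so that Theorem \ref{3th4.1} applies verbatim once the dictionary between the convex-analytic and the monotone-operator settings is in place. First I would set $A:=\hat{\partial}\phi$ and $B:=\hat{\partial}\varphi$ and carry out the structural identifications. Since the parameters satisfy (\textbf{C2}), Proposition \ref{3pro-c12} guarantees that (\ref{cond3.1}) holds, and in particular $\min\{1+\gamma\alpha,\,1+\delta\beta\}>0$. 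This is precisely the hypothesis needed to invoke Lemma \ref{3le5.1}: with $\phi$ being $\alpha$-convex and $\varphi$ being $\beta$-convex, $\hat{\partial}\phi$ is maximally $\alpha$-monotone, $\hat{\partial}\varphi$ is maximally $\beta$-monotone, and the proximity operators coincide with the resolvents, $P_{1}=\operatorname{Prox}_{\gamma\phi}=J_{\gamma A}$ and $P_{2}=\operatorname{Prox}_{\delta\varphi}=J_{\delta B}$. Consequently $\mathrm{R}_{1}=R_{\gamma A}^{\lambda}$, $\mathrm{R}_{2}=R_{\delta B}^{\mu}$, and $\mathrm{R}_{2}\mathrm{R}_{1}=R_{\delta B}^{\mu}R_{\gamma A}^{\lambda}$, so (\ref{dynamic-system5}) is literally an instance of (\ref{dynamic-system}).

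Next I would verify that the remaining hypotheses of Theorem \ref{3th4.1} transfer. The assumption $\text{zer}(A+B)\neq\emptyset$ becomes $\text{zer}(\hat{\partial}\phi+\hat{\partial}\varphi)\neq\emptyset$, which is assumed; assumption (\textbf{A1}) on $\theta$ is assumed; and the integrability condition $\int^{+\infty}_{t_{0}}\theta(s)\int^{+\infty}_{s}\|f(\tau)\|d\tau\,ds<+\infty$ is exactly (\ref{3th4.1-2}). With all hypotheses in force, applying Theorem \ref{3th4.1} directly yields $\|\mathrm{R}_{2}\mathrm{R}_{1}u(t)-u(t)\|=O(1/\sqrt{t})$, and its concluding statement delivers the sharpened rate $o(1/\sqrt{t})$ in the case $f\equiv0$, where the integrability condition holds automatically.

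Because the argument is a translation rather than a fresh estimate, there is no serious analytic obstacle. The only point demanding care is the bookkeeping at the interface between the two settings: confirming that $1+\gamma\alpha>0$ and $1+\delta\beta>0$ hold under (\textbf{C2}), so that Lemma \ref{3le5.1} is applicable and the prox/resolvent operators are single-valued with full domain, and reading $\text{Fix}(\mathrm{R}_{2}\mathrm{R}_{1})=\text{zer}(\hat{\partial}\phi+\hat{\partial}\varphi)$ as the correct interpretation of $\text{zer}(A+B)$. Once these identifications are settled the result follows at once, so I would keep the write-up to a short verification that the hypotheses of Theorem \ref{3th4.1} transfer and then cite that theorem.
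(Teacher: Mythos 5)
Your proposal is correct and is exactly the argument the paper intends: the paper itself gives no separate proof of this theorem, stating only that (\ref{dynamic-system5}) is the special case $A=\hat{\partial}\phi$, $B=\hat{\partial}\varphi$ of (\ref{dynamic-system}) and that Lemma \ref{3le5.1} lets the results of Section \ref{sec.4} (here Theorem \ref{3th4.1}) carry over. Your additional care in checking via Proposition \ref{3pro-c12} that (\textbf{C2}) yields $1+\gamma\alpha>0$ and $1+\delta\beta>0$, so that Lemma \ref{3le5.1} applies and the prox/resolvent identification is legitimate, is precisely the bookkeeping the paper leaves implicit.
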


\begin{theorem}\label{3th5.4}
Suppose that all the conditions in Theorem \ref{3th5.1} or Theorem \ref{3th5.2} are satisfied.
Let $\hat{u}\in{\rm Fix}(\mathrm{R}_{2}\mathrm{R}_{1})$ such that $w-\lim\limits_{t\rightarrow+\infty}u(t)=\hat{u}$.
Suppose that
$I-\mathrm{R}_{2}\mathrm{R}_{1}$ is metrically subregular at $\hat{u}$ for $0$ with a ball $B(\hat{u},r)$
and modulus $\kappa$.
If $r>\lim\limits_{t\rightarrow+\infty}\|u(t)-\hat{u}\|$,
then
there exist $t'\geq t_{0}$ and $M_{4}>0$ such that for all $t\geq t'$
\begin{eqnarray*}
 &&{\rm dist}^{2}(u(t),{\rm Fix}(\mathrm{R}_{2}\mathrm{R}_{1}))\nonumber\\
  &\leq&
  e^{-\frac{1}{\kappa^{2}}\int^{t}_{t'}\theta(s)ds}
  \left(M_{4}\int^{t}_{t'}\|f(s)\|e^{\frac{1}{\kappa^{2}}\int^{s}_{t'}\theta(x)dx}ds+\|u(t')-\hat{u}\|^{2}\right).
  \end{eqnarray*}
  Furthermore,
  if $\int^{+\infty}_{t_0}\|f(s)\|e^{\frac{1}{\kappa^{2}}\int^{s}_{t_0}\theta(x)dx}ds<+\infty$, then
  $${\rm dist}\left(u(t),{\rm Fix}(\mathrm{R}_{2}\mathrm{R}_{1}))\right)
  =O\left(e^{-\frac{1}{\kappa^{2}}\int^{t}_{t'}\theta(s)ds}\right).$$
\end{theorem}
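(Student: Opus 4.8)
The plan is to recognize Theorem \ref{3th5.4} as a direct specialization of Theorem \ref{3th4.2} to the subdifferential setting. First I would invoke Lemma \ref{3le5.1}: since $\phi$ and $\varphi$ are proper, closed, $\alpha$- and $\beta$-convex, and since $(\textbf{C1})$ or $(\textbf{C2})$ forces $\min\{1+\gamma\alpha,1+\delta\beta\}>0$, the Fr\'echet subdifferentials $\hat{\partial}\phi$ and $\hat{\partial}\varphi$ are maximally $\alpha$- and $\beta$-monotone, while $P_{1}=\operatorname{Prox}_{\gamma\phi}=J_{\gamma\hat{\partial}\phi}$ and $P_{2}=\operatorname{Prox}_{\delta\varphi}=J_{\delta\hat{\partial}\varphi}$ are single-valued with full domain. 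Consequently the system (\ref{dynamic-system5}) coincides with the adaptive Douglas--Rachford dynamical system (\ref{dynamic-system}) upon setting $A=\hat{\partial}\phi$ and $B=\hat{\partial}\varphi$, so that $\mathrm{R}_{1}=R_{\gamma A}^{\lambda}$, $\mathrm{R}_{2}=R_{\delta B}^{\mu}$ and $\mathrm{R}_{2}\mathrm{R}_{1}=R_{\delta B}^{\mu}R_{\gamma A}^{\lambda}$. This identification is the whole bridge: every hypothesis phrased in the convex language of (\ref{SMP2}) becomes the corresponding monotone-operator hypothesis used in Section \ref{sec.4}.

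Next I would verify that the hypotheses of Theorem \ref{3th4.2} are in force. Under $(\textbf{C1})$ the operator $\mathrm{R}_{2}\mathrm{R}_{1}$ is quasi-nonexpansive and the assumptions of Theorem \ref{3th3.4} hold; under $(\textbf{C2})$ it is nonexpansive (hence satisfies the demiclosedness principle by Remark \ref{demiclose}) and the assumptions of Theorem \ref{3th3.5-2} hold. In either case ${\rm Fix}(\mathrm{R}_{2}\mathrm{R}_{1})\neq\emptyset$, and the standing hypothesis $w\text{-}\lim_{t\rightarrow+\infty}u(t)=\hat{u}\in{\rm Fix}(\mathrm{R}_{2}\mathrm{R}_{1})$ is precisely the weak-convergence conclusion furnished by Theorem \ref{3th5.1}(iii) or Theorem \ref{3th5.2}(iii). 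The metric subregularity of $I-\mathrm{R}_{2}\mathrm{R}_{1}$ at $\hat{u}$ for $0$ with ball $B(\hat{u},r)$ and modulus $\kappa$, together with $r>\lim_{t\rightarrow+\infty}\|u(t)-\hat{u}\|$, are assumed outright and match the corresponding hypotheses of Theorem \ref{3th4.2} verbatim.

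With all hypotheses confirmed, I would apply Theorem \ref{3th4.2} with $\mathcal{T}=\mathrm{R}_{2}\mathrm{R}_{1}$, which immediately yields a threshold $t'\geq t_{0}$ and a constant $M_{4}>0$ (arising as $2\mathcal{M}$, where $\mathcal{M}$ bounds $\|u(t)-\hat{u}\|$ along the trajectory) for which the first displayed estimate holds on $[t',+\infty)$. For the asymptotic statement, the summability condition $\int_{t_{0}}^{+\infty}\|f(s)\|e^{\frac{1}{\kappa^{2}}\int_{t_{0}}^{s}\theta(x)dx}ds<+\infty$ is exactly (\ref{3th4.2-1}); since $t'\geq t_{0}$, it keeps $\int_{t'}^{t}\|f(s)\|e^{\frac{1}{\kappa^{2}}\int_{t'}^{s}\theta(x)dx}ds$ bounded as $t\rightarrow+\infty$, so the parenthetical factor remains $O(1)$ and the prefactor $e^{-\frac{1}{\kappa^{2}}\int_{t'}^{t}\theta(s)ds}$ dictates the decay, giving the stated $O(\cdot)$ rate. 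There is no genuine analytic obstacle here, as the theorem is a faithful transcription of Theorem \ref{3th4.2}; the only point deserving care is the bookkeeping of Lemma \ref{3le5.1} that certifies $P_{1},P_{2}$ as the resolvents $J_{\gamma\hat{\partial}\phi},J_{\delta\hat{\partial}\varphi}$ with the monotonicity constants $\alpha,\beta$ carried over unchanged, plus the harmless rescaling of the integral's lower limit from $t_{0}$ to $t'$ in the summability hypothesis.
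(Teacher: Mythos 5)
Your proposal is correct and follows exactly the route the paper intends: Section 5's theorems (including this one) are stated as direct specializations of the Section 4 results, obtained by setting $A=\hat{\partial}\phi$, $B=\hat{\partial}\varphi$, invoking Lemma \ref{3le5.1} to identify $P_{1},P_{2}$ with the resolvents and to carry over maximal $\alpha$-, $\beta$-monotonicity, and then applying Theorem \ref{3th4.2} with $\mathcal{T}=\mathrm{R}_{2}\mathrm{R}_{1}$. Your hypothesis bookkeeping (Theorem \ref{3th5.1} matching Theorem \ref{3th3.4}, Theorem \ref{3th5.2} matching Theorem \ref{3th3.5-2}, and the summability condition being exactly (\ref{3th4.2-1})) is precisely what the paper's implicit argument requires.
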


\begin{remark}

Theorem \ref{3th5.4} covers  \cite[Theorem 4.1 and Theorem 4.3]{ZM-OP} where $\theta(t)\equiv\bar{\Theta}>0$, $f(t)\equiv0$ and $\mathbb{H}=\mathbb{R}^{n}$.
It is worth mentioning that a local linear convergence rate of the DR algorithm for the problem (\ref{SMP2}) was derived in \cite{c29Guo} under the metric subregularity condition.
\end{remark}

\section{Conclusion}

In this paper we studied the asymptotic behavior of the nonautonomous evolution equation governed by a quasi-nonexpansive operator in Hilbert spaces. We proved the weak convergence of the trajectory to a fixed  point of the operator by relying on  Lyapunov analysis, and then established a flexible global exponential-type rate under the metric subregularity condition. We also analyzed the convergence of the trajectories of the adaptive Douglas-Rachford dynamical system which is applied for finding a zero of  "strongly+weakly" monotone operators. We derived  continuous time analogs to the corresponding results of Liang et al. \cite{Liang-2016} and Dao and Phan \cite{Dao-Phan2018}.


\end{document}